\documentclass[12pt,a4paper,reqno]{amsart}

\usepackage{bbm}

\usepackage{amsmath,amssymb,amsthm,amscd,mathrsfs}

\usepackage[english]{babel}
\usepackage[mathscr]{eucal}

\usepackage[pdftex]{color,graphicx}

\usepackage[pdftex,bookmarks=true,colorlinks=true, pdfstartview=FitV, linkcolor=black,  citecolor=black, urlcolor=black,pdfstartview=FitH]{hyperref}
 \usepackage{stmaryrd}
\usepackage{color}
\usepackage[applemac]{inputenc} 
\usepackage{fancyhdr}
\usepackage{epsfig}
\usepackage{cite}
\usepackage{graphicx}
\usepackage{latexsym}
\usepackage{bm}
\usepackage{mathtools}				
\usepackage{esdiff}					
\usepackage{epic}
\usepackage{multicol}
\usepackage{supertabular} 
\usepackage{listings}
\usepackage{verbatim}
\usepackage{turnstile} %

\usepackage{verbatim}

\usepackage{tikz}
\usetikzlibrary{arrows.meta}				
\usetikzlibrary{decorations.markings}		         
\usepackage{tikz-dimline}					

\usepackage{xcolor}

\usetikzlibrary{arrows.meta,calc,decorations.pathreplacing,positioning}

\definecolor{trajblue}{RGB}{35,92,155}
\definecolor{distorange}{RGB}{214,126,48}
\definecolor{regiongray}{RGB}{105,105,105}
\definecolor{stripblue}{RGB}{222,235,247}
\definecolor{cutgreen}{RGB}{61,139,94}
\definecolor{accentred}{RGB}{172,54,54}
\definecolor{shellpurple}{RGB}{112,78,154}

\usepackage{dsfont}

\font\sc=rsfs10 at 12pt
\numberwithin{equation}{section}

\renewcommand{\a}{\alpha}
\renewcommand{\b}{\beta}
\newcommand{\g}{\gamma}
\newcommand{\G}{\Gamma}
\renewcommand{\d}{\delta}
\newcommand{\D}{\Delta}
\newcommand{\e}{\epsilon}
\newcommand{\ve}{\varepsilon} 

\newcommand{\y}{\eta}
\renewcommand{\th}{\theta}

\renewcommand{\l}{\lambda}
\renewcommand{\L}{\Lambda}
\newcommand{\m}{\mu}
\newcommand{\n}{\nu}
\newcommand{\x}{\xi}

\renewcommand{\r}{\rho}
\newcommand{\s}{\sigma}
\renewcommand{\t}{\tau}
\newcommand{\f}{\phi}

\newcommand{\F}{\Phi}
\newcommand{\h}{\chi}
\newcommand{\p}{\psi}

\renewcommand{\o}{\omega}
\renewcommand{\O}{\Omega}

\newcommand{\C}{{\mathbb C}}
\newcommand{\Dl}{{\mathbb D}}

\newcommand{\R}{{\mathbb R}}

\newcommand{\N}{{\mathbb N}}

\newcommand{\Vl}{{\mathbb V}}
\newcommand{\Wl}{{\mathbb W}}

\newcommand{\mbf}[1]{\protect\mbox{\boldmath$#1$\unboldmath}}

\newcommand{\ab}{\mbf{a}}

\newcommand{\bb}{\mbf{b}}
\newcommand{\bt}{{\tilde{\mathbf b}}}
\newcommand{\cb}{\mbf{c}}

\newcommand{\eb}{\mbf{e}}

\newcommand{\rb}{\mbf{r}}

\newcommand{\Ab}{\mbf{A}}

\newcommand{\Jb}{\mbf{J}}

\newcommand{\Mb}{\mbf{M}}

\newcommand{\Rb}{\mbf{R}}

\newcommand{\Ac}{{\mathcal A}}
\newcommand{\Bc}{{\mathcal B}}

\newcommand{\Dc}{{\mathcal D}}

\newcommand{\Jc}{{\mathcal J}}

\newcommand{\Oc}{{\mathcal O}}

\newcommand{\Rc}{{\mathcal R}}

\newcommand{\asf}{{\mathsf a}}
\newcommand{\bsf}{{\mathsf b}}

\newcommand{\Ssf}{{\mathsf S}}

\newcommand{\ot}{\otimes}
\newcommand{\supp}{\hbox{{\rm supp}}\,}

\newcommand{\kum}[1]{\langle {#1} \rangle}

\newcommand{\pd}{\partial} 

\DeclareMathOperator{\im}{{\rm Im}\,}
\DeclareMathOperator{\re}{{\rm Re}\,}

\newcommand{\Res}{\operatorname{Res\,}}

\newcommand{\dist}{\operatorname{dist\,}}

\newcommand{\Tr}{\operatorname{Tr\,}}
\newcommand{\tr}{\operatorname{tr\,}}

\newcommand{\1}{\mathbf 1}

\newcommand\Item[1][]{%
	\ifx\relax#1\relax  \item \else \item[#1] \fi
	\abovedisplayskip=0pt\abovedisplayshortskip=0pt~\vspace*{-\baselineskip}}

\newcommand{\bopp}[1]{\mathcal{B}(#1)}

\newcommand{\cs}{C^{\infty}} 
\newcommand{\ccs}{C_{0}^{\infty}} 

\newcommand{\tD}{\sc\mbox{D}\hspace{1.0pt}} 
\newcommand{\tE}{\sc\mbox{E}\hspace{1.0pt}} 
\newcommand{\tS}{\sc\mbox{S}\hspace{1.0pt}} 

\newcommand{\ddist}{\tD^{\prime}} 
\newcommand{\dtemp}{\tS^{\prime}} 
\newcommand{\edist}{\tE^{\prime}} 

\newcommand{\wfa}{\operatorname{WF}_{\operatorname{a}}}

\newcommand{\Fr}{\sc\mbox{F}\hspace{1.0pt}} 

\newcommand{\opc}[1]{{\rm Op}[{#1}]}

\newcommand{\sm}{\operatorname{spec}\,}

\newcommand{\sme}{\operatorname{spec}_{\operatorname{ess}}\,}

\newcommand{\smac}{\operatorname{spec}_{\operatorname{ac}}}

\newcommand{\ssa}{\operatorname{singsupp}_{\text{a}}\,}

\usepackage{bm}

\newcommand{\alb}{\bm{\a}}                 
\newcommand{\beb}{\bm{\b}}                 
\newcommand{\Pib}{\bm{\Pi}}                
\newcommand{\Idb}{\mathbf{I}}              

\newcommand{\Pot}{\Vl}              
\newcommand{\Wpot}{\Wl}             
\newcommand{\dsym}{\mbf{d}}                

\newcommand{\Mfour}{M_4(\C )}
\newcommand{\Sone}{\mathcal{S}_1}
\newcommand{\dLeb}{L}
\DeclareMathOperator{\Opw}{Op^{w}_{\hbar}}

\newtheorem{theorem}{Theorem}[section]
\newtheorem{proposition}[theorem]{Proposition}
\newtheorem{lemma}[theorem]{Lemma}
\newtheorem{corollary}[theorem]{Corollary}

\theoremstyle{definition}
\newtheorem{definition}[theorem]{Definition}

\theoremstyle{remark}
\newtheorem{remark}[theorem]{Remark}
\newtheorem{example}[theorem]{Example}

 \newenvironment{myindentpar}[1]%
 {\begin{list}{}%
         {\setlength{\leftmargin}{#1}}%
         \item[]%
}
 {\end{list}}

\begin{document}

	\tikzset{->-/.style={decoration=
				{
				markings,
				mark=at position #1 with
				{\arrow[line width = 0.6mm]{stealth}}
				},
				postaction={decorate}
				}}


\title[Lower bound on Dirac resonances with mass-type potentials...]{Lower bound on Dirac resonances
with mass-type potentials in the semi-classical limit}

\author{Zhuo Chen}

\author{Michael Melgaard} 
\address[Z.~Chen,  M.~Melgaard]{Department of Mathematics \\ School of Mathematical  and Physical Sciences \\
                                     University of Sussex      \\
                                     BN1 9QH Brighton, United Kingdom} 
\email{m.melgaard@sussex.ac.uk (corresponding author)}

\date{9 September 2026}

\subjclass{Primary 35P25, 35Q40; Secondary 34C40, 81Q20}

\keywords{Resonance, Dirac operator, mass-type potential, trace formulas}

%
%
\begin{abstract}
We establish lower bounds on the number of resonances 
for three-dimensional semiclassical Dirac operators with mass-type
potentials.  For a pair $\Dl_j=\Dl_0+\mbf{\beta}v_j$, $j=1,2$, 
satisfying the exterior analyticity, decay and non-degeneracy
assumptions of the paper, we associate a relative distribution $\mu$
to the corresponding mass functions.  We prove that if $E_0>1$ is a
boundary point of $\supp\mu$, then every complex neighbourhood of $E_0$, and likewise of $-E_0$, contains in total at least
$C\hbar^{-3}$ resonances of the two operators, counted with
multiplicity.  The proof transfers an analytic wave-front singularity
of $\mu$ to the relative classical density of states and combines this with Khochman's local trace formula and a relative semiclassical trace formula. As a consequence, if $v$ is a single bounded mass-type potential, analytic with power decay in an exterior sector, with $\inf(1+v)>0$ and $\sup v>0$, then
$\Dl=\Dl_0+\mbf{\beta}v$ has at least $C\hbar^{-3}$ resonances in every complex neighbourhood of $E_0=1+\sup v$ and of $-E_0$, for all sufficiently small $\hbar$. The exponent agrees with Khochman's upper bound, the two resonance families are related by spectral symmetry, and $E_0$ is necessarily a trapping energy.  We also prove the relative semiclassical trace formula needed in the argument for general Hermitian matrix-valued perturbations.
\end{abstract}

\maketitle

\section{Introduction}
\label{ZCMM2026d:introduction}
Two questions organise the mathematical study of the resonances of a given operator: 
whether resonances exist at all, and how many of them lie in a prescribed region of the complex plane. They are logically independent, since a bound from above on the counting function is compatible with the resonance set being empty, and they are answered by different techniques. For the three-dimensional semiclassical Dirac operator perturbed by a mass-type potential the second question has an answer in the literature: Khochman \cite{khochman07} obtained the upper bound $\Oc (\hbar ^{-3})$ for the number of resonances in complex domains of a suitable type, for smooth matrix-valued perturbations which are analytic in a sector outside a compact set. The first question has not been addressed for this class of perturbations, and an upper bound carries no information about it. Here we answer it. We give a criterion, explicit in terms of the analytic
singularities of a distribution built from the mass function $m=1+v$, under which
resonances are produced, and we complement Khochman's estimate by a lower bound of the
same order $\hbar ^{-3}$; in particular the power is optimal.

For Schr\"odinger operators a resonance admits several definitions which look quite different: 
a pole of the meromorphically continued resolvent, an eigenvalue of a dilated \cite{aguilar_combes,balslev_combes} or distorted \cite{hunziker} operator, or a point produced by the microlocal machinery devised for the semiclassical regime by Helffer and Sj\"ostrand \cite{helsjo86}. Whenever two of them apply to the same operator they single out the same 
points, with the same multiplicities, as Helffer and Martinez showed \cite{helmar87}. 
We shall work with the distorted operator throughout. For the general theory, and for a 
systematic account of the counting problem which we take up here, we refer to the monograph of Dyatlov and Zworski \cite{dyatlov_zworski19}. 

The mechanism that we exploit originates with Sj\"ostrand \cite{sjostrand97_2}. For a semiclassical Schr\"odinger operator $-\hbar ^2\D + V$ he observed that his local trace formula 
\cite{sjostrand97}, played off against a Weyl-type formula, 
converts non-analyticity of a certain distribution attached to $V$ 
into the presence of many resonances: each point of the analytic singular support of that distribution attracts of the order of $\hbar ^{-n}$ of them. Nedelec \cite{nedelec01} carried this over to Schr\"odinger operators with matrix-valued potentials, the essential new ingredient being a local trace formula for systems. 
What follows is the analogue for a Dirac operator whose perturbation acts on the mass term. 
As we explain below, the passage from the potential to the classical density of states is 
then no longer given by a convolution, and it is at this point that the mass-type case 
departs from all of the preceding ones. 

For the three-dimensional Dirac operator, resonances were first defined rigorously by means of complex dilation, in the spirit of Aguilar-Balslev-Combes-Simon theory, by Weder \cite{weder73} 
and Seba \cite{seba}. In the semiclassical regime Parisse \cite{parisse91,parisse92} adapted the theory of Helffer and Sj\"ostrand \cite{helsjo86} and located shape resonances exponentially close to the real axis, the imaginary part of the first one being governed by an Agmon-type distance between the
attractive well and the free-particle Dirac sea; Amour, Brummelhuis and Nourrigat
\cite{amour_brummelhuis_nourrigat_01} proved existence in the non-relativistic limit for
potentials growing at infinity; and Ito \cite{ito20} has since treated dilation-analytic
potentials which diverge at infinity. Khochman \cite{khochman07} set up the complex
distortion for smooth matrix-valued potentials admitting an analytic extension into a
sector of $\C ^3$ around $\R ^3$ outside a compact set, with power decay there, and proved
in that setting an Aguilar-Balslev-Combes-Simon theorem, a local trace formula and the
upper bound $\Oc (\hbar ^{-3})$ quoted above. A second, non-semiclassical line of work
defines resonances as poles of the meromorphically continued cut-off resolvent: Kungsman
and Melgaard \cite{jkmm17} established a global Poisson wave trace formula for compactly
supported Hermitian perturbations, Cheng and Melgaard \cite{cheng_melgaard21} extended it
to the spectrum edges $\pm mc^2$ and deduced from it that such perturbations produce
infinitely many resonances, and Dumant \cite{dumant25} has recently proved a Rellich-type
theorem which excludes real resonances, both for compactly supported fields and for the
MIT bag model. In one dimension and in the radial case the counting function itself is
known, by the work of Iantchenko and Korotyaev \cite{iantchenko_korotyaev14}.  Finally, for a \emph{scalar} (electric) perturbation $V=vI_4$ of the semiclassical Dirac operator, 
Kungsman-Melgaard \cite{jkmm14} established the matching lower bound of order
$\hbar ^{-3}$ near $\sup v + 1$ and $\inf v -1$.

Mass-type perturbations, that is perturbations of the form $\beb\, v(x)$, arise in several
distinct physical models: in condensed-matter and metamaterial systems such as graphene,
Dirac materials and topological insulators the low-energy quasi-particles behave like
Dirac fermions with a mass gap that varies in space; in the MIT bag model for hadrons;
and in the Dirac phenomenology of nucleons in a mean field, where the scalar potentials
generated by $\s $-meson fields effectively modify the nucleon mass inside the nucleus.
We refer to the introduction of Chen and Melgaard \cite{chen_melgaard26} for a discussion
of these models and for further references. Mathematically, letting the mass depend on
$x$ changes the geometry of the symbol: the two energy sheets acquire an $x$-dependent
gap, and the threshold at which the branch $p_{+}$ takes its minimum in $\x $ becomes the
function $x\mapsto m(x) = 1+v(x)$ itself. In \cite{chen_melgaard26} an a priori
microlocally exponential weighted estimate on an $I$-Lagrangian manifold was established
for a Dirac operator perturbed by a matrix-valued long-range potential, and it was used to
exhibit a resonance-free domain 
$[\l _0 - \zeta_{0} , \l _0 + \zeta_{0} ] - i[0, C\hbar \log \hbar ^{-1}]$ at a nontrapping energy $\lambda_{0}$ for a Dirac operator with a mass-type potential.

In the present paper we show, in the spirit of Sj\"{o}strand
\cite{sjostrand97_2}, Nedelec \cite{nedelec01} and
Kungsman--Melgaard \cite{jkmm14}, that analytic singularities of a
relative classical density of states force the existence of
semiclassical Dirac resonances for mass-type perturbations.  Our
general result concerns a pair $\Dl_j=\Dl_0+\beb v_j$, $j=1,2$,  
satisfying the exterior analyticity, decay and non-degeneracy conditions of 
Assumption~$(\Ab _\d )$.  Associated with the two mass 
functions $m_j=1+v_j$ is the relative distribution $\m$ defined in~\eqref{ZCMM2026d:mu_def}.  
We prove in Theorem~\ref{ZCMM2026d:thm_rel} that if $E_0>1$ is any boundary point of
$\supp(\m)$, then every complex neighbourhood $U$ of $E_0$ contains
resonances satisfying
\[
   \sum_{j=1}^{2}
   \#\bigl(\Res(\Dl_j)\cap U\bigr)
   \geq C(U)\hbar^{-3}
\]
for all sufficiently small $\hbar$, and the same conclusion holds
near $-E_0$. The corresponding single-operator result is obtained by constructing
a comparison potential for which the comparison operator has no resonances near the energy in question.  More precisely, let
$\Dl=\Dl_0+\beb v$,  where $v\in C^\infty(\R^3;\R)$ is bounded, extends holomorphically 
to an exterior sector with power decay there, and satisfies
$\inf_{x\in\R^3}(1+v(x))>0$, $\sup_{x\in\R^3}v(x)>0$. Putting $E_0=1+\sup_{x\in\R^3}v(x)$, 
Theorem~\ref{ZCMM2026d:thm_abs} gives, for every complex neighbourhood $U$ of $E_0$,
\[
   \#\bigl(\Res(\Dl)\cap U\bigr)    \geq C(U)\hbar^{-3}
\]
for some constant $C(U)$ depending on $U$ and all sufficiently small $\hbar$, and likewise near $-E_0$.  Thus the lower bound has the same semiclassical order as Khochman's $\Oc(\hbar^{-3})$ upper bound. The geometry underlying the single-operator result is encoded by the threshold function
$E_{\rm thr}(x) :=\min_{\x\in\R^3}p_+(x,\x) =m(x)=1+v(x)$.  The energy $E_0$ is precisely the maximum of $E_{\rm thr}$, and the single-operator theorem therefore produces resonances 
near this maximal threshold; the second family, near $-E_0$, is its mirror image under the spectral symmetry of the mass-type Dirac operator. It is worth noting that an electric perturbation $\Pot=vI_4$ has the same positive threshold function, since
$\min_{\x}\bigl(v(x)+\kum{\x}\bigr)=1+v(x)$. This explains why the positive resonance energy found in \cite{jkmm14} and the one obtained here coincide, even though, as we explain below, the mechanisms which produce them are quite different.

The proof proceeds along the chain
\begin{align*}
E_0\in\pd\supp(\m),\quad E_0>1
   &\Longrightarrow (E_0,\pm1)\in\wfa(\m)\\
   &\Longrightarrow (E_0,\pm1)\in\wfa(\o)
   \Longrightarrow \#\Res\geq C\hbar^{-3}.
\end{align*}
The first implication is Lemma~\ref{ZCMM2026d:endpoint_lemma}; the second
is the transfer of analytic singularities established below, and the
third follows by combining Khochman's local trace formula with the
relative semiclassical trace formula and a family of FBI wave
packets.  For the single-operator statement,
Theorem~\ref{ZCMM2026d:thm_abs}, the comparison potential constructed in
Appendix~\ref{ZCMM2026d:appendix_comparison} is chosen so that $E_0=1+\sup v=\max\supp(\m)$, and so that the comparison operator has no resonances in a
neighbourhood of $E_0$.  Theorem~\ref{ZCMM2026d:thm_abs} then follows
from Theorem~\ref{ZCMM2026d:thm_rel}.

The main outline of the proof is similar to those in \cite{sjostrand97_2},
\cite{nedelec01} and \cite{jkmm14}, which are based on combining the local trace formula
for resonances in the spirit of Sj\"ostrand (see \cite{sjostrand97_2}) and a trace
formula of Robert (see \cite{robert_trace}). The first of these has been adapted to the
Dirac operator, for matrix-valued potentials, in \cite{khochman07}; the second was
established for the Dirac operator by Bruneau and Robert \cite{bruneau_robert99} in the
scalar case, and we prove the version we need in Appendix~\ref{ZCMM2026d:appendix_trace} 
for Hermitian matrix-valued perturbations satisfying the stated symbol bounds and with sufficiently decaying difference; no eigenvalue-branch separation is required.
For a pair of Dirac operators 
$\Dl _j=\Dl _0 + \beb\, v_j(x) =\Opw{(\dsym _j)}$, $j=1,2$,  we let $p_{\pm ,j} = \pm \t _j$, 
$\t _j (x,\x ) := ( m_j(x)^2 + |\x |^2)^{1/2}$, denote the eigenvalues of the symbols $\dsym _j$ and we introduce a distribution $\o$ by
\begin{equation*} 
\langle \o , \f \rangle _{\Dc ' , \Dc } = \int \limits _{\R ^6} \Big ( \f (p_{+,2}(x,\x )) - \f (p_{+,1}(x,\x )) 
+ \f (p_{-,2}(x,\x )) - \f (p_{-,1}(x,\x )) \Big ) \, dx \, d\x .
\end{equation*} 
As in \cite{jkmm14}, the points of the analytic singular support $\ssa (\o )$ of $\o $ generate resonances in their vicinity. To see which points
belong to it, we describe $\o $ directly in terms of the \emph{mass functions}
$m_j = 1+v_j$: in Section~\ref{ZCMM2026d:resonances_near_analytic_sing} it is shown that
\begin{equation}
\langle \o , \f \rangle = \langle \m , \F _\f \rangle, \qquad
\F _\f (t) = 4\pi \int \f (s) \, s\, (s^2 - t^2)_+^{1/2}\, ds ,
\label{ZCMM2026d:intro_kernel}
\end{equation}
for $\f \in \ccs (\R _+)$, where
$$
\langle \m , \f \rangle = \int \limits _{\R ^3} \big ( \f (m_2(x)) - \f (m_1(x))\big )\, dx .
$$
Two features distinguish the mass-type case from the electric one. First, for $E>0$ only
the branch $p_{+,j}$ contributes, exactly and with no case distinction, because
$p_{-,j}\le 0$; the decomposition of $\o $ into two pieces, needed in \cite{jkmm14}, is
not required here. Second, and more importantly, the kernel
$K (E,t) = 4\pi E (E^2 - t^2)_+^{1/2}$ in \eqref{ZCMM2026d:intro_kernel} depends on
$(E,t)$ through $E^2 - t^2$ and \emph{not} through $E-t$: the map $\m \mapsto \o $ is not
a convolution, and the theory of analytic pseudodifferential operators cannot be applied
to it directly. After the real-analytic substitution $u = E^2$, however, it becomes
exactly the convolution by $2\pi u_+^{1/2}$, whose symbol is
$2\pi \G (3/2)(i\x + 0)^{-3/2}$, an elliptic classical analytic symbol of order $-3/2$. A 
boundary singularity of $\m$ at $E_0>1$ is transported to a singularity of $\o$ at the same point --- there is no shift by the rest mass, in contrast with \cite{jkmm14}, where the convolution kernel is singular one unit off the diagonal. The two descriptions agree, of course: in both cases the resonances appear near the extreme values of the function $x\mapsto \min _\x p_+(x,\x )$, which equals $1 + v(x)$ for an electric as well as for a mass-type perturbation.

A third feature is a symmetry. The element $\Jb = \alb _1\alb _2\alb _3\beb $ of the
Clifford algebra anticommutes with each of $\alb _1, \alb _2, \alb _3, \beb $, whence
$\Jb \Dl _j \Jb ^{-1} = -\Dl _j$ for a mass-type perturbation; consequently the set of
resonances is symmetric with respect to $z \mapsto -z$, and our two families near $E_0$
and $-E_0$ are mirror images of each other. An electric perturbation commutes with $\Jb $
and destroys this symmetry, which is why the two families in \cite{jkmm14} sit at the
unrelated points $\sup v + 1$ and $\inf v -1$.

Finally, the extremal energy occurring in the single-potential
Theorem~\ref{ZCMM2026d:thm_abs} has a direct dynamical interpretation. 
The energy $E_0=1+\sup v$ is necessarily trapping, so the nontrapping hypothesis of the resonance-free theorem of Chen--Melgaard~\cite[Theorem~5.11]{chen_melgaard26} fails precisely at the energy singled out by our lower-bound result.  On compact intervals
of nontrapping energies to the right of $E_0$, the two results can
instead be combined to confine the resonances below a strip of depth
$C\hbar\log\hbar^{-1}$.  We make both statements precise in
Section~\ref{ZCMM2026d:trapping_section}.

Our proof for the existence of resonances does not allow us to compute (or approximate)
the resonances. Hence it is worth mentioning that this question has been addressed by
Kungsman-Melgaard \cite{jkmm12a,jkmm12b}, who justified the complex absorbing
potential method, widely used to approximate resonances, for perturbed Dirac operators in
the semiclassical limit.

\section{Notation and assumptions}
\label{ZCMM2026d:notation}
The free, or unperturbed, semiclassical Dirac operator 
$$
\Dl_0=-i\hbar\sum_{j=1}^3\alb_j\partial_j+\beb, 
$$
initially defined on $C_0^\infty(\R^3;\C^4)$, is essentially self-adjoint. We denote its 
unique self-adjoint closure again by $\Dl_0$; it acts on $L^2(\R^3;\C^4)$ with 
domain $H^1(\R^3;\C^4)$. Here $\alb _j$ and $\beb $ are Hermitian $4\times 4$ matrices
satisfying the usual anti-commutation relations
\begin{equation}
\left \{ \begin{array}{lcl}
\alb _j \alb _k + \alb _k \alb _j = 2\d _{jk} \Idb _4 , & \quad & j,k=1,2,3, \\
\alb _j \beb + \beb \alb _j = 0 , & \quad & j=1,2,3,
\end{array} \right.
\label{ZCMM2026d:anticommutation}
\end{equation}
and $\beb ^2 = \Idb _4$; $\Idb _4$ being the $4\times 4$ identity matrix. It is well-known (see, e.g., \cite{thaller}) that the spectrum of $\Dl _0$ is purely absolutely continuous and equals
$$
\sm (\Dl _0) = \smac (\Dl _0) = (-\infty , -1] \cup [1,\infty ).
$$
Units are chosen so that $c=m=1$; in physical units the thresholds are $\pm mc^2$ and the
energies $E_0$ appearing below read $mc^2 + \sup V$.

We consider a pair $\Dl _j = \Dl _0 + \Pot _j$, $j=1,2$, of perturbations of $\Dl _0$
by \emph{mass-type} potentials
$$
\Pot _j (x) = \beb \, v_j (x), \qquad v_j : \R ^3 \to \R ,
$$
so that
\begin{equation}
\Dl _j = -i\hbar \sum _{k=1}^3 \alb _k \partial _k + \beb \, m_j(x),
\qquad m_j := 1 + v_j ,
\label{ZCMM2026d:dirac_mass}
\end{equation}
where $m_j$ is the (position dependent) mass function. We assume:

\begin{myindentpar}{1cm}
\textbf{Assumption} $(\Ab _\d )$: $v_j \, : \, \R ^3 \to \R $ is a bounded smooth function
and it has an analytic extension into a sector
$$
C_{\ve , R_0} := \{ z\in \C ^3 \, : \, |\im z| \le \ve |\re z| , \, |\re z| > R_0 \}
$$
for some $\ve \in (0,1)$ and $R_0\ge 0$. We assume that for some $\r >0$,
$$
|v_j(z)| = \Oc (\kum{z}^{-\r }), \qquad z \in C_{\ve ,R_0},\qquad \kum{z} := (1 + |z|^2)^{1/2}, 
$$
that, for some $\d >3$,
$$
|v_2(z) - v_1(z)| = \Oc (\kum{z}^{-\d }), \qquad z\in C_{\ve , R_0},
$$
and, finally, that the mass does not degenerate:
$$
c_0 := \min _{j=1,2}\, \inf _{x\in \R ^3} m_j(x) > 0 .
$$
\end{myindentpar}

The last requirement, absent in \cite{jkmm14}, is what makes the two branches of the
symbol uniformly separated; it holds automatically in the perturbative regime
$\| v_j \| _\infty < 1$, i.e.\ $\| V \|_\infty < mc^2$ in physical units, which covers all
the models mentioned in Section~\ref{ZCMM2026d:introduction}. Note also that
Assumption~$(\Ab _\d )$ contains no hypothesis on the derivatives of $v_j$.

\begin{lemma}
Let $v$ satisfy the analyticity and decay hypotheses of Assumption $(\Ab _\d )$. Then there are constants $C_\a >0$ such that
$$
|\partial ^\a v(x)| \le C_\a \kum{x}^{-\r - |\a |}, \qquad x \in \R ^3 ,
$$
and, more generally, $|\partial ^\a v(z)| \le C_\a \kum{z}^{-\r -|\a |}$ for
$z\in C_{\ve /2, 2R_0+1}$. The same holds for $v_2 - v_1$, with $\r $ replaced by $\d $.
\label{ZCMM2026d:cauchy_estimates}
\end{lemma}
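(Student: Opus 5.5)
The plan is to use Cauchy estimates on polydiscs whose radius grows linearly with $|\re z|$, and to treat the bounded region separately by smoothness.

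\emph{Region far out.} Let $z\in C_{\ve/2,2R_0+1}$, so $|\im z|\le \frac{\ve}{2}|\re z|$ and $|\re z|>2R_0+1$. Set $r:=c\,|\re z|$ with a small constant $c$, say $c=\ve/(8\sqrt3)$. Consider the polydisc $P(z,r)=\{w\in\C^3: |w_k-z_k|\le r,\ k=1,2,3\}$. For $w\in P(z,r)$ we have $|w-z|\le\sqrt3\,r$, hence
\[
|\re w|\ge |\re z|-\sqrt3 r \ge \tfrac78|\re z| > R_0
\]
and
\[
|\im w|\le \tfrac{\ve}{2}|\re z|+\sqrt3 r\le \tfrac{5\ve}{8}|\re z|\le \tfrac{5\ve}{7}|\re w|\le\ve|\re w| .
\]
Thus $P(z,r)\subset C_{\ve,R_0}$. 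On this set $\kum{w}\asymp\kum{z}\asymp |\re z|$, with constants depending only on $\ve$, because $|\im z|\lesssim|\re z|$ and $|\re z|\ge1$.

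The iterated one-variable Cauchy formula on the polydisc then gives
\[
|\partial^\a v(z)|\le \a!\,r^{-|\a|}\sup_{P(z,r)}|v| \le C_\a \kum{z}^{-\r-|\a|} .
\]
Restricting to real $x$ with $|x|>2R_0+1$ gives the real estimate there.

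\emph{Bounded region.} For real $x$ with $|x|\le 2R_0+1$, the estimate follows from smoothness alone. The derivative $\partial^\a v$ is continuous on this compact ball, hence bounded, and $\kum{x}^{-\r-|\a|}$ is bounded below there. Combining the two regions yields the estimate on all of $\R^3$.

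\emph{The difference $v_2-v_1$.} The identical argument applies to $w=v_2-v_1$. This function is holomorphic on $C_{\ve,R_0}$, since both $v_1$ and $v_2$ are, and satisfies $\Oc(\kum{z}^{-\d})$ there. The same polydisc estimate therefore gives the bound with $\r$ replaced by $\d$, and smoothness handles the compact part.

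\emph{Main point to check.} The only real step is the geometric inclusion $P(z,r)\subset C_{\ve,R_0}$ together with the comparability $\kum{w}\asymp\kum{z}$. Both are elementary once $c$ is chosen small relative to $\ve$, as verified above. Note that no hypothesis on the derivatives of $v$ is needed: all derivative bounds come from the holomorphic extension and its decay.
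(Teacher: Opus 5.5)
Your proof is correct and follows the same route as the paper: Cauchy inequalities on a polydisc of radius proportional to $|\re z|$ that stays inside $C_{\ve,R_0}$, with the compact real region handled by smoothness. Your radius $r=\ve|\re z|/(8\sqrt3)$ explicitly accounts for the factor $\sqrt3$ between polydisc radius and Euclidean distance in $\C^3$, so your geometric inclusion holds for all $\ve\in(0,1)$; the paper's $r=\frac{\ve}{4}|\re z|$ is stated without that factor.
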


\begin{proof}
Let $|\re z| \ge 2R_0+1$ and $|\im z| \le \frac{\ve }{2}|\re z|$. The polydisc of radius
$r=\frac{\ve }{4}|\re z|$ about $z$ is contained in $C_{\ve , R_0}$: its points $z'$
satisfy $|\im z'| \le \frac{\ve }{2}|\re z| + r \le \ve (|\re z| - r) \le \ve |\re z'|$
and $|\re z'| \ge |\re z| - r > R_0$. Since $|v| \le C\kum{z'}^{-\r } \le C'\kum{z}^{-\r }$
there, Cauchy's inequalities give
$|\partial ^\a v(z)| \le C'\a ! \, r^{-|\a |}\kum{z}^{-\r } \le C_\a \kum{z}^{-\r -|\a |}$.
On the remaining bounded set the derivatives are bounded by smoothness.
\end{proof}

By introducing the semiclassical Fourier transform
$$
(\Fr _\hbar u)(\x ) = \frac{1}{(2\pi \hbar )^{3/2}}\int \limits _{\R ^3}u(x)
e^{-ix\cdot \x /\hbar } \, dx
$$
we can express $\Dl _j$ as $\hbar $-pseudodifferential operators 
$\Dl_j = \mathrm{Op}^w_\hbar(\dsym_j)$,
where the matrix-valued symbols
\begin{equation}
\dsym _j (x,\x ) = \sum _{k=1}^3 \alb _k \x _k + \beb \, m_j(x)
\label{ZCMM2026d:symbol}
\end{equation}
have two-fold degenerate eigenvalues, as we now record; the computation is that of
\cite[Lemma 5.1]{chen_melgaard26}.

\begin{lemma}
Let $\dsym _j$ be given by \eqref{ZCMM2026d:symbol} and put
$$
\t _j (x,\x ) := \big ( m_j(x)^2 + |\x |^2 \big )^{1/2}.
$$
Then $\dsym _j ^2 = \t _j^2 \Idb _4$ and $\dsym _j$ has the two eigenvalues
\begin{equation}
p_{\pm , j}(x,\x ) = \pm \t _j (x,\x ),
\label{ZCMM2026d:eigenvalues_symbol}
\end{equation}
each of multiplicity two, with spectral projectors
$$
\Pib _{\pm , j} = \tfrac12 \big ( \Idb _4 \pm \t _j^{-1}\dsym _j \big ) ,
\qquad \tr \Pib _{\pm ,j} = 2 .
$$
Consequently, for every $g\in \cs (\R )$,
\begin{equation}
\tr \, g(\dsym _j (x,\x )) = 2 \big [ g(\t _j) + g(-\t _j) \big ] ,
\label{ZCMM2026d:trace_of_g}
\end{equation}
and $g(\dsym _j)$ is a smooth function of $(x,\x )$ which vanishes identically for
$|\x | > R$ whenever $\supp g \subset [-R,R]$.
\label{ZCMM2026d:symbol_eigenvalues}
\end{lemma}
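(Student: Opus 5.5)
The plan is to compute $\dsym_j^2$ directly from the anticommutation relations \eqref{ZCMM2026d:anticommutation} and then obtain all remaining assertions from $\dsym_j^2=\t_j^2\Idb_4$. Expanding the square gives
\[
\dsym_j^2=\sum_{k,l}\alb_k\alb_l\x_k\x_l+m_j\sum_k(\alb_k\beb+\beb\alb_k)\x_k+m_j^2\beb^2 .
\]
Symmetrising the first sum and using $\alb_k\alb_l+\alb_l\alb_k=2\d_{kl}\Idb_4$ reduces it to $|\x|^2\Idb_4$. The middle sum vanishes, and $\beb^2=\Idb_4$. Hence $\dsym_j^2=(|\x|^2+m_j^2)\Idb_4=\t_j^2\Idb_4$.

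Since $\dsym_j$ is Hermitian (the $\alb_k,\beb$ are Hermitian and $\x,m_j$ are real), it is diagonalisable with real eigenvalues. Each eigenvalue satisfies $\l^2=\t_j^2$, so $\l\in\{\pm\t_j\}$, and $\t_j\ge c_0>0$ by Assumption $(\Ab_\d)$ (any $m_j>0$ would suffice here). The matrices $\Pib_{\pm,j}=\frac12(\Idb_4\pm\t_j^{-1}\dsym_j)$ are then checked directly. They sum to $\Idb_4$; they satisfy $\Pib_{\pm,j}^2=\Pib_{\pm,j}$ and $\Pib_{+,j}\Pib_{-,j}=0$, both using $(\t_j^{-1}\dsym_j)^2=\Idb_4$; and $\dsym_j\Pib_{\pm,j}=\pm\t_j\Pib_{\pm,j}$. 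So they are the spectral projectors.

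To get the multiplicities we need $\tr\dsym_j=0$. This follows from the standard trick $\tr\alb_k=\tr(\alb_k\beb\beb)=-\tr(\beb\alb_k\beb)=-\tr\alb_k$, and similarly $\tr\beb=\tr(\alb_1^2\beb)=-\tr(\alb_1\beb\alb_1)=-\tr\beb$. Therefore $\tr\Pib_{\pm,j}=\frac12(4\pm0)=2$, and each eigenvalue has multiplicity two.

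By the spectral theorem, $g(\dsym_j)=g(\t_j)\Pib_{+,j}+g(-\t_j)\Pib_{-,j}$, and taking traces gives \eqref{ZCMM2026d:trace_of_g}. For smoothness we rewrite this as
\[
g(\dsym_j)=\tfrac12\big[g(\t_j)+g(-\t_j)\big]\Idb_4+\tfrac12\,\frac{g(\t_j)-g(-\t_j)}{\t_j}\,\dsym_j .
\]
Because $\t_j=(m_j^2+|\x|^2)^{1/2}$ is smooth and bounded below by $c_0>0$, each coefficient is a composition of smooth functions. Alternatively, one can write both coefficients as smooth functions of $\t_j^2$ via even/odd decomposition, which avoids using the positive lower bound at all.

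Finally, if $\supp g\subset[-R,R]$ and $|\x|>R$, then $\t_j\ge|\x|>R$, so $g(\pm\t_j)=0$ and $g(\dsym_j)$ vanishes. There is no real obstacle in this lemma; the only point needing care is the tracelessness argument for the multiplicities, and the smoothness, which relies on $\t_j$ staying away from zero.
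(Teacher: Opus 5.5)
Your proof is correct and follows the paper's argument essentially step by step. Both use the anticommutation expansion of $\dsym_j^2$ and the trace trick giving $\tr\dsym_j=0$ (hence multiplicity two), the even/odd rewriting of $g(\dsym_j)$, and the bound $\tau_j\ge|\xi|$ for the support claim. The only deviation is the smoothness step: you use $\tau_j\ge c_0>0$ to make $\tau_j$ and $\tau_j^{-1}$ smooth, while the paper invokes Whitney's theorem on even functions, which is your stated alternative and does not need the lower bound; both are valid here.
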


\begin{proof}
By \eqref{ZCMM2026d:anticommutation},
$$
\dsym _j^2 = \Big ( \sum _k \alb _k \x _k \Big )^2
+ m_j \sum _k \big ( \alb _k \beb + \beb \alb _k\big ) \x _k + m_j^2 \beb ^2
= \big ( |\x |^2 + m_j^2 \big ) \Idb _4 .
$$
Thus $\dsym _j$ is Hermitian with square $\t _j^2 \Idb _4$, so
$\sm \dsym _j \subset \{ \pm \t _j \}$. Moreover $\tr \dsym _j = 0$: from
\eqref{ZCMM2026d:anticommutation} and $\beb ^{-1}=\beb $ we get
$\tr \alb _k = \tr (\beb \alb _k \beb ) = -\tr \alb _k$, and likewise
$\tr \beb = \tr (\alb _1 \beb \alb _1) = -\tr \beb $. Hence the two eigenvalues have
equal multiplicities, i.e. two each, and
$\tr \Pib _{\pm ,j} = \frac12 (4 \pm \t _j^{-1}\tr \dsym _j) = 2$, which gives
\eqref{ZCMM2026d:trace_of_g} through
$g(\dsym _j) = g(\t _j)\Pib _{+,j} + g(-\t _j)\Pib _{-,j}$. Writing
$$
g(\dsym _j) = \frac{g(\t _j) + g(-\t _j)}{2}\, \Idb _4
+ \frac{g(\t _j)-g(-\t _j)}{2\t _j}\, \dsym _j
$$
and noting that both scalar coefficients are even and smooth functions of $\t _j$, hence
smooth functions of $\t _j^2 = m_j^2 + |\x |^2$ by Whitney's theorem, shows that
$g(\dsym _j) \in \cs (\R ^6)$. Finally $\t _j \ge |\x | > R$ forces
$g(\pm \t _j)=0$.
\end{proof}

We note the elementary but useful consequence of \eqref{ZCMM2026d:eigenvalues_symbol} that,
for $E>0$, only the branch $p_{+,j}$ can contribute to the classical density of states,
since $p_{-,j} \le -c_0 <0$; and symmetrically for $E<0$. 
This is what removes the case distinction which was needed  in \cite{jkmm14}.

\subsection{Analytic microlocal preliminaries} 
\label{ZCMM2026d:analytic_symbols}
This subsection introduces the analytic microlocal terminology in 
Section~\ref{ZCMM2026d:resonances_near_analytic_sing}. 
Everything here is classical: our sources are Sj\"ostrand \cite[Sections 6,7]{sjstrand82} for the wavefront set,
H\"ormander \cite[Chapter 8]{hr_1}, and Treves \cite[Chapter 5]{treves} for the symbol
classes. The semiclassical parameter plays no role and is set equal to one throughout the
subsection. In the sequel, $\ddist(\R^{n})$, $\dtemp(\R^{n})$, and $\edist(\R^{n})$ denote, respectively, the spaces of distributions,  tempered distribution, and compactly supported distributions on $\R^{n}$. 

\smallskip
Let $u \in \dtemp (\R ^n)$, let $(x_0,\x _0) \in \R ^n \times (\R ^n\setminus \{ 0\})$ and let $\h \in \ccs (\R ^n)$ be equal to one near $x_0$. The Fourier-Bros-Iagolnitzer, or FBI,
transform of $u$ is
\begin{equation}
(T_\l u)(x,\x ) = 2^{-n/2}\Big ( \frac{\l }{\pi }\Big )^{3n/4}
\int _{\R ^n}e^{i\l (x-y)\cdot \x - \l (x-y)^2/2}\, \h (y)u(y)\, dy ,
\label{ZCMM2026d:FBI}
\end{equation} 
a family of entire functions indexed by the large parameter $\l $. One says that $u$ is
microlocally exponentially small at $(x_0,\x _0)$ if $T_\l u = \Oc (e^{-C\l })$ for some
$C>0$, uniformly for $(x,\x )$ in a neighbourhood of $(x_0,\x _0)$ and as
$\l \to \infty $. The points at which this fails make up the analytic wavefront set.

\begin{definition}
The analytic wavefront set of $u \in \dtemp (\R ^n)$, denoted $\wfa (u)$, is the
complement in $\R ^n \times (\R ^n \setminus \{ 0\})$ of the set of points where $u$ is of
microlocal exponential decay.
\end{definition}

It is well known that $\wfa (u)$ is a closed conic subset of
$\R ^n \times (\R ^n \setminus \{0\})$ and that the image under the projection onto the
first coordinate equals $\ssa (u)$, i.e.\ the smallest closed set outside of which $u$ is
real analytic (see e.g.\ Sj\"{o}strand \cite[Section 6]{sjstrand82}). If $u$ is real
valued then $\wfa (u)$ is symmetric with respect to $\x \mapsto -\x $, and $\wfa $ is
invariant under real-analytic diffeomorphisms: if $q$ is one, and $q_*$ denotes the push
forward defined by $\langle q_*u , \h \rangle = \langle u , \h \circ q \rangle $, then
\begin{equation}
\wfa (q_*u) = \big \{ (q(t), \y /q'(t)) \, : \, (t,\y ) \in \wfa (u) \big \}
\label{ZCMM2026d:wf_invariance}
\end{equation}
(see \cite[Theorem 8.5.1$'$]{hr_1}).
\\
\\
Next we introduce a certain Gevrey class of symbols, namely the analytic one (see, e.g.,
Treves \cite[Chapter 5]{treves}).

\begin{definition}
A function $a\in C^\infty (\R ^n \times (\R ^n \setminus \{ 0\}))$ is said to belong to
the space $\Ssf_{\rm a}^m(\R ^n)$ of classical analytic symbols if for any
$K \Subset \R ^n$
$$
|\partial _x^\a \partial _\x ^\b a(x,\x )| \le C^{|\a | + |\b | + 1}\a ! \b !
(1+|\x |)^{m-|\b |}
$$
for $x\in K$ and $|\x | \ge B$, where $B$ and $C$ are positive constants.
\label{ZCMM2026d::symbolclass}
\end{definition}

If $a\in \Ssf_{\rm a}^m(\R ^n)$ and $(x_0,\x _0)\in T^*\R ^n\setminus 0$, we say that
$a$ is elliptic at $(x_0,\x _0)$ if there are a neighbourhood $U$ of $x_0$, a conic
neighbourhood $\G _0$ of $\x _0$, and constants $C_0,C_1>0$ such that
$$
|a(x,\x )| \ge C_0 \kum{\x }^m
\quad \text{for }x\in U,\ \x \in \G _0,\ |\x |\ge C_1.
$$
We say that $a$ (and the corresponding analytic pseudodifferential operator) is elliptic
if it is elliptic at every point of $T^*\R ^n\setminus 0$.
\begin{proposition}
Let $A$ be an analytic pseudodifferential operator of order $m$ on $\R ^n$, in the
standard sense of the analytic pseudodifferential calculus, and let $u\in \edist (\R ^n)$, the space of compactly supported distributions. Then
\begin{equation}
\wfa (Au) \subset \wfa (u).
\label{ZCMM2026d:analytic_pseudolocal}
\end{equation}
Moreover, if $A$ is elliptic at $(x_0,\x _0)\in T^*\R ^n\setminus 0$, then
\begin{equation}
(x_0,\x _0)\in \wfa (Au)
\quad \Longleftrightarrow \quad
(x_0,\x _0)\in \wfa (u).
\label{ZCMM2026d:analytic_elliptic}
\end{equation}
Consequently, if $A$ is elliptic on $T^*\R ^n\setminus 0$, then
\begin{equation}
\wfa (Au)=\wfa (u).
\label{ZCMM2026d:analytic_global_elliptic}
\end{equation}
\label{ZCMM2026d:elliptic_wavefront}
\end{proposition}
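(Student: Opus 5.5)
The inclusion \eqref{ZCMM2026d:analytic_pseudolocal} is the analytic pseudolocality of analytic pseudodifferential operators, and the plan is to prove it with the FBI transform \eqref{ZCMM2026d:FBI}, following Sj\"ostrand \cite[Sections 6,7]{sjstrand82} and Treves \cite[Chapter 5]{treves}. Write $Au(x)=(2\pi)^{-n}\iint e^{i(x-y)\cdot\eta}a(x,\eta)u(y)\,dy\,d\eta$ with $a\in\Ssf^m_{\rm a}$ after the usual reduction: the symbol is realised through a formal analytic symbol summed up to an exponentially small error, and a properly supported cutoff contributes a kernel that is real analytic off the diagonal. Fix $(x_0,\x_0)\notin\wfa(u)$. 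Since $u$ has compact support, split $u=u_1+u_2$ with $u_1=\h u$ for a cutoff $\h$ near $x_0$. The kernel of $A$ is real analytic away from the diagonal, so $Au_2$ is real analytic near $x_0$ and contributes nothing to $\wfa$ there.

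For $u_1$, I would express $T_\l A u_1=T_\l A T_\l^* T_\l u_1$ modulo exponentially small terms, using the inversion formula $T_\l^*T_\l=I$ with the analytic cutoff. The operator $T_\l A T_\l^*$ has a kernel of the form $e^{\l\Phi(\alpha,\beta)}b(\alpha,\beta;\l)$. The main steps are a contour deformation in $\eta$ into the complex domain, which the analytic symbol estimates of Definition~\ref{ZCMM2026d::symbolclass} allow for $|\eta|\ge B$, followed by a stationary phase argument. Together they show that $\re\Phi\le -c\,|\alpha-\beta|^2$ up to polynomial factors in $\l$. So the kernel is exponentially small off the diagonal of $T^*\R^n$. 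Splitting the $\beta$-integral into a small neighbourhood of $(x_0,\x_0)$, where $T_\l u_1=\Oc(e^{-C\l})$, and its complement, where the kernel decays exponentially while $T_\l u_1$ grows at most polynomially (as $u$ is a distribution of finite order), gives $T_\l Au=\Oc(e^{-C'\l})$ near $(x_0,\x_0)$. Hence $(x_0,\x_0)\notin\wfa(Au)$. I expect this kernel estimate to be the main obstacle. It is where analyticity of the symbol, and not mere smoothness, is essential, because the contour shift in $\eta$ produces the exponential off-diagonal decay.

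For \eqref{ZCMM2026d:analytic_elliptic}, one direction is \eqref{ZCMM2026d:analytic_pseudolocal}. For the converse, I would build an analytic microlocal parametrix $B$ at $(x_0,\x_0)$. Its symbol $b\sim\sum_k b_k$ is obtained by the usual recursion $b_0=1/a$, with each $b_k$ determined by the composition formula, on a conic neighbourhood where $|a|\ge C_0\kum{\x}^m$. Cauchy estimates give $|b_k|\le C^{k+1}k!\,\kum{\x}^{-m-k}$, so $b$ is a formal analytic symbol, which can be summed, after a cutoff, to an analytic symbol. Then $BA=I+R$, where $R$ is microlocally analytic-regularising near $(x_0,\x_0)$, that is, its FBI kernel is exponentially small there. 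If $(x_0,\x_0)\notin\wfa(Au)$, applying \eqref{ZCMM2026d:analytic_pseudolocal} to $B$ (localised near $x_0$) gives $(x_0,\x_0)\notin\wfa(BAu)$, and hence $(x_0,\x_0)\notin\wfa(u)$. Controlling the factorial growth of the $b_k$ is the second delicate point, handled as in Treves \cite[Chapter 5]{treves}.

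Finally, \eqref{ZCMM2026d:analytic_global_elliptic} follows pointwise from \eqref{ZCMM2026d:analytic_elliptic} at every point of $T^*\R^n\setminus0$.
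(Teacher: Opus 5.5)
Your outline is correct and follows the same route the paper takes. The paper gives no proof of its own here: it cites the standard FBI-transform theory (Sj\"ostrand's Sections 6--7), and your sketch reconstructs that argument, with the Boutet de Monvel--Kr\'ee/Treves summation of the formal parametrix for the elliptic converse. The one point to keep explicit is the one the paper itself stresses: your contour deformation only covers $|\eta|\ge B$, so the bounded-frequency part of $A$ must be analytic-regularising (its symbol analytic in $x$ there as well), and this is what the hypothesis ``in the standard sense of the analytic pseudodifferential calculus'' supplies.
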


This is the analytic pseudolocality and elliptic regularity theorem for pseudodifferential
operators; see, for example, Sj\"{o}strand \cite[Sections 6,7]{sjstrand82} and H\"{o}rmander
\cite[Chapter 8]{hr_1}. Notice that the conclusion is a statement about an \emph{analytic pseudodifferential
operator}, not merely about an arbitrary $C^\infty$ completion, for bounded $|\x |$, of a
symbol satisfying the estimates in Definition~\ref{ZCMM2026d::symbolclass}.

We shall use Proposition~\ref{ZCMM2026d:elliptic_wavefront} only for $n=1$, in Lemma~\ref{ZCMM2026d:structure_lemma}. There 
the relevant convolution operator is split into
a high-frequency part and a low-frequency part. The high-frequency multiplier is an
elliptic classical analytic symbol of order $-3/2$, while the inverse Fourier transform of
the low-frequency multiplier is entire; hence the latter part is analytic-regularising on
compactly supported distributions.

Finally we record the elementary observation which will produce the analytic singularity
that drives the whole argument.

\begin{lemma}
Let $u \in \ddist (\R )$ be real valued and let $x_0 \in \supp (u)$ be a point which is
not an interior point of $\supp (u)$. Then $(x_0,\pm 1)\in \wfa (u)$.
\label{ZCMM2026d:endpoint_lemma}
\end{lemma}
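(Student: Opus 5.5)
Since $x_0\in\supp(u)$, we have $x_0\in\ssa(u)$: a distribution real analytic near $x_0$ and vanishing on one side of $x_0$ vanishes near $x_0$. But $\supp(u)$ is closed and $x_0$ is not an interior point, so there are points $x_k\to x_0$ with $u=0$ near $x_k$. This alone does not give analyticity near $x_0$, so the argument must use the one-dimensional microlocal structure. Since $\wfa(u)$ is closed, conic, and projects onto $\ssa(u)$, we know $(x_0,\x)\in\wfa(u)$ for at least one of $\x=\pm1$. Because $u$ is real valued, $\wfa(u)$ is symmetric under $\x\mapsto-\x$, so both $(x_0,1)$ and $(x_0,-1)$ lie in $\wfa(u)$. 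This reduces the lemma to showing $x_0\in\ssa(u)$.

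To show $x_0\in\ssa(u)$, suppose instead that $u$ is real analytic on some interval $I=(x_0-\e,x_0+\e)$. Then $u|_I$ is a real-analytic function on a connected interval. Since $x_0$ is not interior to $\supp(u)$, every neighbourhood of $x_0$ contains a point $y\notin\supp(u)$. Take such a $y\in I$; then $u$ vanishes on an open subinterval of $I$ around $y$. By the identity theorem, $u\equiv0$ on all of $I$, so $x_0\notin\supp(u)$, a contradiction. Hence $x_0\in\ssa(u)$, which is the projection of $\wfa(u)$.

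The only subtle point is the symmetry used above, $(x,\x)\in\wfa(u)\Leftrightarrow(x,-\x)\in\wfa(\bar u)$, and I expect this to be the main obstacle. I would verify it from \eqref{ZCMM2026d:FBI} with a real cutoff $\h$. Since $u=\bar u$, conjugating gives $\overline{T_\l u(x,\x)}=T_\l u(x,-\x)$, so exponential decay near $(x_0,\x_0)$ is equivalent to exponential decay near $(x_0,-\x_0)$. Together with the fact, recalled earlier in the paper, that the projection of $\wfa(u)$ is $\ssa(u)$, this completes the argument. If $u$ is not tempered, replace it by $\h u$ with $\h=1$ near $x_0$; this changes neither the statement nor the local wavefront set.
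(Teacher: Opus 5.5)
Your proof is correct and follows the paper's route. Non-interiority gives an open interval near $x_0$ on which $u$ vanishes, the identity theorem then rules out real analyticity at $x_0$ (so $x_0\in\ssa(u)$), and reality of $u$ gives both covectors $(x_0,\pm1)$; you also check that symmetry directly via $\overline{T_\l u(x,\x)}=T_\l u(x,-\x)$, where the paper simply quotes it. The one flaw is your opening sentence, whose justification (``vanishing on one side'') is not what the hypothesis provides, but your second paragraph replaces it with the correct argument, so nothing is lost.
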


\begin{proof}
Since $x_0$ belongs to $\supp (u)$ but is not an interior point of it, every neighborhood
of $x_0$ meets the open set $\R \setminus \supp (u)$ and therefore contains a non-empty
open interval on which $u$ vanishes. If $u$ were real analytic near $x_0$, it would be
given by a convergent power series on some neighborhood $N$ of $x_0$; taking such an
interval inside $N$, analytic continuation would force $u \equiv 0$ on $N$, contradicting
$x_0 \in \supp (u)$. Thus $x_0 \in \ssa (u)$, i.e.\ $(x_0,\y ) \in \wfa (u)$ for some
$\y \ne 0$; since $u$ is real, both signs occur.
\end{proof}

This is the one-dimensional case of \cite[Corollary 8.4.16]{hr_1}, which asserts that if
$u \in \ddist (X)$, where $X$ is an interval on $\R $, and if $x_0 \in X$ is a boundary
point of $\supp u$, then $(x_0,\pm 1)\in \wfa (u)$; we have included the short proof for
completeness.

\section{Resonances}
\label{ZCMM2026d:resonances}
As in Kungsman-Melgaard \cite{jkmm12a,jkmm14} we use the method of complex distortion
to define resonances. Deforming the underlying space in a complex direction is an idea whose analytic-dilation form is due to Aguilar and Combes \cite{aguilar_combes} and to Balslev and Combes \cite{balslev_combes}, and whose place in perturbation theory was settled by Simon \cite{simon73}; the flexible version, in which the deformation is generated by a vector field vanishing near the origin and only the behaviour at infinity matters, is Hunziker's \cite{hunziker}. It is the implementation of the latter for the Dirac operator, due to Khochman \cite{khochman07} and used again in \cite[Section 3]{chen_melgaard26}, that we follow here. Once the resonances have been
defined we record the two trace formulas which are played off against each other in
Section~\ref{ZCMM2026d:resonances_near_analytic_sing}.

\subsection{Definition of resonances}
In this subsection $\Dl = \Dl _0 + \Vl$, $\Vl=\beb v$, denotes any perturbation with $v$ satisfying the individual hypotheses of Assumption~$(\Ab _\d )$.

Let $R_0 \ge 0$ be as in Assumption $(\Ab _\d )$ and $\varkappa :\R ^3 \to \R ^3$ be a smooth
function such that $\varkappa (x)=0$ for $|x| \le R_0$ and $\varkappa(x)=x$ outside a compact set
containing $B(0,R_0)$, which also satisfies
$\sup _{x\in \R ^3}\| \nabla \varkappa(x)\| \le \sqrt{2}$. Next introduce
$\f _\th (x) = x + \th \varkappa(x)$ and let $\Jc_\th $ denote the Jacobian determinant of
$\f _\th $. For $\th \in \R $ we define a one-parameter family of distortions on
$\tS (\R ^3;\C ^4)$ by
$$
(U_\th f)(x) = \Jc_\th ^{1/2}(x) f(\f _\th (x)).
$$
When $|\th | < 2^{-1/2}$ it extends to a unitary operator on $L^2(\R ^3;\C ^4)$ (see
\cite{khochman07} for a proof). To define $U_\th $ also for complex values of $\th $ one
first introduces the linear space $\Ac $ of entire functions $f=(f_1,\ldots ,f_4)$ such
that
\begin{align*}
\lim _{\substack{|z| \to \infty \\ z\in C_{\ve ,R_0}}}|z|^k |f_j(z)| = 0,
\quad \text{for } 1\le j \le 4, \quad k \in \N ,
\end{align*}
and let the dense subspace $\Bc $ of so called analytic vectors consist of the functions
in $L^2(\R ^3;\C ^4)$ that have extensions in $\Ac $. With
$$
D_\ve := \big \{ z\in \C \, : \, |z| \le \frac{\ve }{\sqrt{1+\ve ^2}} \big \}
$$
it then holds that for any $f \in \Bc $ the map $\th \mapsto U_\th f$ is analytic for
$\th \in D_\ve $ and $U_\th \Bc $ is dense in $L^2(\R ^3;\C ^4)$ for any
$\th \in D_\ve $. One can then show that for $\th \in D_\ve $ the operator
$$
U_\th \Dl  U_\th ^{-1} = U_\th \Dl _0 U_\th ^{-1} + U_\th \Pot  U_\th ^{-1}
= \Dl _{0,\th } + \Pot  \circ \f _\th
$$
with domain $H^1(\R ^3;\C ^4)$ is an analytic family of type $A$ in the sense of Kato (see
\cite[Chapter 7, Section 2]{kato}). It is shown in \cite{khochman07} that
\begin{equation}
\sme (\Dl _\th ) = \sm (\Dl _{0,\th }) = \G _\th =
\Big \{ z = \pm \Big ( \frac{s }{(1+\th )^2} + 1 \Big )^{1/2} , \, s \in [0,\infty )
\Big \} .
\label{ZCMM2026d:gamma_theta}
\end{equation}
Since $\Pot  = \beb v$ is a smooth Hermitian matrix-valued potential with an analytic
extension into $C_{\ve ,R_0}$ and power decay there, the results of \cite{khochman07},
which are proved in that generality, apply verbatim in the present setting.

The following version of the Aguilar-Balslev-Combes-Simon theorem for the perturbed Dirac
operator was established by Khochman \cite{khochman07}.

\begin{proposition}
For $\th _0 \in D_\ve ^+ = D_\ve \cap \{ \im z \ge 0 \}$ we have
\begin{itemize}
\item[(i)] For $f,g \in \Bc $, the function
$$
F_{f,g}(z) = \langle f, (\Dl - z)^{-1}g\rangle
$$
has a meromorphic extension from
$$
\Sigma = \{ \im z \ge 0, \, \re z > -1 \} \cup \{ \im z \le 0 , \, \re z <1 \}
\setminus \sm (\Dl )
$$
across $\sm (\Dl )$ and into
$$
S_{\th _0} = \Big \{ \bigcup _{\th \in D_\ve ^+}\G _\th \, : \,
\arg (1+\th ) < \arg (1 + \th _0), \, \frac{1}{|1+\th |} < \frac{1}{|1+\th _0|}\Big \} .
$$
\item[(ii)] The poles of the continuation of $F_{f,g}$ into $S_{\th _0}$ are the
eigenvalues of $\Dl _{\th _0}$.
\item[(iii)] These poles are independent of the family $U_{\th _0}$.
\item[(iv)] The operator $\Dl _{\th _0}$ has no discrete spectrum in $\Sigma $.
\end{itemize}
\label{ZCMM2026d:khochmanprop}
\end{proposition}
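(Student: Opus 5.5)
The statement is the Dirac analogue of the Aguilar--Balslev--Combes--Simon theorem. The plan is to reduce it to the standard Hunziker distortion argument, checking only the points where the mass-type structure $\Pot=\beb v$ and the Dirac essential spectrum $\G_\th$ enter.

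First, for real $\th$ with $|\th|<2^{-1/2}$, $U_\th$ is unitary, so
\[
F_{f,g}(z)=\langle U_{\bar\th}f,(\Dl_\th-z)^{-1}U_\th g\rangle ,\qquad f,g\in\Bc ,\ z\in\Sigma .
\]
Both $\th\mapsto U_\th g$ and $\th\mapsto U_{\bar\th}f$ extend analytically to $D_\ve$ because $f,g\in\Bc$. The family $\Dl_\th=\Dl_{0,\th}+\Pot\circ\f_\th$ is analytic of type $A$ on $H^1$. Here Lemma~\ref{ZCMM2026d:cauchy_estimates} shows that $\Pot\circ\f_\th$ is a bounded multiplication operator, analytic in $\th$ and decaying like $\kum{x}^{-\r}$, since $\f_\th(x)\in C_{\ve,R_0}$ for $|x|$ large.

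Next I would show that $\Pot\circ\f_\th$ is relatively compact with respect to $\Dl_{0,\th}$. It is a decaying bounded multiplication operator, and $(\Dl_{0,\th}-z)^{-1}$ maps $L^2$ into $H^1$ for $z\notin\G_\th$. Then the analytic Fredholm theorem and Weyl's theorem give $\sme(\Dl_\th)=\G_\th$, and the spectrum of $\Dl_\th$ off $\G_\th$ is discrete. The right-hand side of the identity above is therefore meromorphic in $z\in\C\setminus\G_\th$. For real $\th$ it is independent of $\th$, so by analyticity in $\th$ it is independent of $\th$ on the connected parameter region whenever $z$ lies off both curves. Sweeping $\th$ over $D_\ve^+$ rotates and dilates $\G_\th$ and exposes the region $S_{\th_0}$ below the real axis. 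This yields the continuation (i), and the poles are eigenvalues of $\Dl_{\th_0}$.

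For (ii) and (iii), the converse direction, I would use density of $U_{\th_0}\Bc$. If $z_0$ is an eigenvalue of $\Dl_{\th_0}$ with Riesz projector $P\neq0$, then $\langle U_{\bar\th_0}f,PU_{\th_0}g\rangle\ne0$ for some $f,g\in\Bc$, so $z_0$ is a genuine pole. Independence of the choice of $\varkappa$ follows because the poles are characterised through $F_{f,g}$, which does not involve $\varkappa$. The Riesz projectors are also analytic in $\th$ and have constant rank, so multiplicities are independent as well.

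For (iv), a point in $\Sigma$ off the spectrum of $\Dl$ is a regular point of the continued $F_{f,g}$, since there it is just the resolvent matrix element. The previous step then excludes eigenvalues of $\Dl_{\th_0}$ in $\Sigma$.

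The main obstacle is computing $\sm(\Dl_{0,\th})=\G_\th$ and proving the resolvent bounds that make $\Dl_{0,\th}-z$ invertible, uniformly off $\G_\th$. The distortion is only asymptotically a dilation, so $\Dl_{0,\th}$ is not exactly $\Dl_0$ with $\hbar$ replaced by $\hbar/(1+\th)$. I would handle this by writing $\Dl_{0,\th}$ as the dilated free operator plus a compactly supported first-order perturbation. I would then show that this perturbation does not change the essential spectrum, using squaring $\Dl_{0,\th}^2\approx -\hbar^2(1+\th)^{-2}\Delta+1$ modulo relatively compact terms together with ellipticity of the symbol $\sum\alb_k\x_k(1+\th)^{-1}+\beb$ off $\G_\th$. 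These are exactly the estimates carried out in \cite{khochman07}, and I would invoke them there.
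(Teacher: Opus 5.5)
Your outline is essentially the paper's route. The paper does not prove this proposition itself but quotes Khochman's Aguilar--Balslev--Combes--Simon theorem \cite{khochman07}, which is established by the Hunziker distortion argument you reconstruct, and which applies verbatim because $\beb v$ is a smooth Hermitian matrix potential with exterior analytic extension and power decay.

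One caution on the step you defer. A compactly supported first-order term (or a second-order term, after squaring) is not relatively compact with respect to an operator of the same order. So the comparison of $\Dl_{0,\th}$ with the pure dilation should instead use compactness of the resolvent difference, which follows from Rellich on the compact support after moving one derivative onto the other resolvent. This must be supplemented by a separate argument that $\Dl_{0,\th}$ has no eigenvalues off $\G_\th$.
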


Proposition~\ref{ZCMM2026d:khochmanprop} justifies the following definition.

\begin{definition}
The resonances of $\Dl $ in $S_{\th _0}\cup \R $, denoted $\Res (\Dl )$, are the
eigenvalues of $\Dl _{\th _0}$. If $z_0$ is a resonance we take its multiplicity to be
the rank of the projection
$$
\frac{1}{2\pi i}\int \limits _{\g _{z_0}} (\Dl _{\th _0} - z)^{-1}\, dz ,
$$
where $\g _{z_0}$ is a sufficiently small positively oriented circle about $z_0$.
\label{ZCMM2026d:definition_resonances}
\end{definition}

Two consequences of this construction will be used. The first is an estimate for the
resolvent of the distorted free operator in the case $R_0 = 0$, in which the distortion
may be taken to be the dilation $\f _\th (x) = (1+\th )x$; it is uniform in $\hbar $,
because the corresponding symbol does not involve $\hbar $. We emphasise that
$\Dl _{0,\th }$ is \emph{not} a normal operator for $\th \notin \R $, so that its
resolvent norm is not the inverse distance to its spectrum; the Clifford identity
\eqref{ZCMM2026d:clifford} replaces normality.

\begin{lemma}
Let $R_0=0$ and $\varkappa(x)=x$, so that $\Dl _{0,\th }$ is the Fourier multiplier with symbol
$\Mb (\x ) = a \sum _k \alb _k \x _k + \beb $, $a_{\th} = (1+\th )^{-1}$. Then
\begin{equation}
\Mb (\x )^2 = \tau_{0,\th} (\x )^2 \Idb _4 , \quad \tau_{0,\th} (\xi)= \big (a_{\th}^2|\x |^2+1\big )^{1/2}, \qquad (\Mb - z)^{-1} = \frac{\Mb + z}{\tau_{0,\th}(\x)^2 - z^2},
\label{ZCMM2026d:clifford}
\end{equation}
and $\{ \pm \tau_{0,\th}(\x ) \, : \, \x \in \R ^3 \} = \G _\th $. Moreover, for $z$ in a fixed
compact subset $K \subset \C \setminus \G _\th $,
$$
\big \| (\Dl _{0,\th } - z)^{-1} \big \| \le \frac{C_{\th , K}}{\dist (z, \G _\th )} .
$$
\label{ZCMM2026d:resolvent_lemma}
\end{lemma}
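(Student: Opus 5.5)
The plan is to work entirely on the Fourier side, where $\Dl_{0,\th}$ acts by multiplication by the matrix $\Mb(\x)$, and to use the Clifford relations in place of normality.

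\emph{Squaring the symbol.} Write $\Mb(\x)=a_\th\sum_k\alb_k\x_k+\beb$. Expanding $\Mb(\x)^2$ with \eqref{ZCMM2026d:anticommutation} removes the cross terms $\alb_k\alb_l+\alb_l\alb_k$ ($k\ne l$) and $\alb_k\beb+\beb\alb_k$. This leaves $a_\th^2|\x|^2\Idb_4+\beb^2=\tau_{0,\th}^2\Idb_4$, exactly as in the proof of Lemma~\ref{ZCMM2026d:symbol_eigenvalues}; only the algebra is used, not hermiticity, so the complex coefficient $a_\th$ is harmless. Since $\Mb$ commutes with $\Mb+z$,
\[
(\Mb-z)(\Mb+z)=\Mb^2-z^2=(\tau_{0,\th}^2-z^2)\Idb_4 .
\]
Hence, whenever $\tau_{0,\th}(\x)^2\ne z^2$, the matrix $\Mb-z$ is invertible with inverse $(\Mb+z)/(\tau_{0,\th}^2-z^2)$.

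\emph{Identifying $\G_\th$.} We have $\pm\tau_{0,\th}(\x)=\pm\bigl(|\x|^2/(1+\th)^2+1\bigr)^{1/2}$. Putting $s=|\x|^2\in[0,\infty)$ reproduces \eqref{ZCMM2026d:gamma_theta}, provided we fix the branch of the square root continuously from the value $1$ at $s=0$. The set of zeros of $\tau_{0,\th}^2-z^2$ as $\x$ ranges over $\R^3$ is therefore $\G_\th$.

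\emph{Norm bound.} By Plancherel, $\|(\Dl_{0,\th}-z)^{-1}\|=\sup_\x\|(\Mb(\x)-z)^{-1}\|$. The numerator obeys $\|\Mb+z\|\le C(1+|\x|)$, uniformly in $z\in K$. For the denominator, factor $\tau_{0,\th}^2-z^2=(\tau_{0,\th}-z)(\tau_{0,\th}+z)$, so that
\[
|\tau_{0,\th}^2-z^2|\ge \dist(z,\G_\th)\cdot\bigl|\tau_{0,\th}(\x)+z\bigr| ,
\]
using $-\tau_{0,\th}(\x)\in\G_\th$. The argument then splits by the size of $|\x|$.

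For large $|\x|$, $|\tau_{0,\th}(\x)|\sim|a_\th||\x|$ while $z$ stays bounded. Hence $|\tau_{0,\th}^2-z^2|\ge c|\x|^2$, and the ratio is $\Oc(|\x|^{-1})$. This is bounded by $C/\dist(z,\G_\th)$ because $\dist(z,\G_\th)$ is bounded above on the compact set $K$.

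For bounded $|\x|$ the numerator is bounded. The factor $|\tau_{0,\th}+z|\ge\dist(z,\G_\th)$ needs more care, since the product of the two distances could be small, giving $\dist^{-2}$. To handle this I use that $K$ is compact and disjoint from the closed set $\G_\th$, so $d_K:=\inf_K\dist(z,\G_\th)>0$. Then $|\tau_{0,\th}^2-z^2|\ge d_K\,\dist(z,\G_\th)$, and the factor $d_K^{-1}$ is absorbed into $C_{\th,K}$.

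Combining the two ranges gives $\|(\Dl_{0,\th}-z)^{-1}\|\le C_{\th,K}/\dist(z,\G_\th)$. The bound is uniform in $\hbar$, because after the semiclassical Fourier transform the multiplier is $\Mb(\x)$ with no $\hbar$ dependence.

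The only real obstacle is this lower bound on the denominator. It is essential that $C_{\th,K}$ be allowed to depend on $K$: that is what keeps the second factor away from zero. It also avoids any quantitative analysis of how the two branches of $\G_\th$ approach each other near $\pm1$.
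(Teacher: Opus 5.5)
Your proof is correct and follows the paper's argument: the Clifford identity $\Mb^2=\tau_{0,\th}^2\Idb_4$, the explicit inverse $(\Mb+z)/(\tau_{0,\th}^2-z^2)$, the factorisation of the denominator, and the split into large and bounded $|\x|$. The only difference is in the bounded range. There the paper observes that at most one of $|\tau_{0,\th}\mp z|$ can be small, since $|\tau_{0,\th}-z|+|\tau_{0,\th}+z|\ge 2|\tau_{0,\th}|\ge 2$, so its constant depends on $K$ only through $\sup_K|z|$. Your use of $d_K=\inf_K\dist(z,\G_\th)$ is valid for the lemma as stated and for its later application to the comparison operator. However, this dependence is not essential, contrary to your closing remark. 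Relatedly, the two branches of $\G_\th$ never approach each other, so there was nothing of that kind to avoid.
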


\begin{proof}
By \eqref{ZCMM2026d:anticommutation},
$\Mb ^2 = a_{\th}^2|\x |^2 \Idb _4 + a_{\th} \sum _k (\alb _k\beb + \beb \alb _k)\x _k + \beb ^2
= (a_{\th}^2|\x |^2+1)\Idb _4$; since $\Mb $ commutes with $z$, the displayed inverse follows,
and the identification of $\{ \pm \tau_{0,\th}(\x )\}$ with $\G _\th $ is
\eqref{ZCMM2026d:gamma_theta} with $\l = |\x |^2$. Consequently
$\| (\Mb - z)^{-1}\| \leq (\| \Mb \| + |z|)/(| \tau_{0,\th}(\x) -z|| \tau_{0,\th}(\x) +z|)$ with
$\| \Mb \| \leq |a_{\th} || \x | + 1 \leq C(| \tau_{0,\th}(\x) |+1)$. 
For $|\x |$ large the numerator is $\Oc (| \tau_{0,\th}(\xi)|)$ while the denominator is 
$\gtrsim | \tau_{0,\th}(\x) |^2$, so the quotient tends to $0$; on the remaining
compact range of $\x $ at most one of the two factors $| \tau_{0,\th}(\x) \mp z|$ can 
be small, and the small one is bounded below by $\dist (z,\G _\th )$. 
Taking the supremum over $\x $ concludes the proof.
\end{proof}

The second consequence is the symmetry of the resonance set, which is special to
mass-type perturbations.

\begin{lemma}
Let $\Jb := \alb _1 \alb _2 \alb _3 \beb $. Then $\Jb $ is invertible and anticommutes
with each of $\alb _1,\alb _2,\alb _3, \beb $. Consequently
$$
\Jb \Dl _j \Jb ^{-1} = -\Dl _j , \qquad \Jb \Dl _{j,\th }\Jb ^{-1} = -\Dl _{j,\th },
$$
and therefore
$$
\Res (\Dl _j) = -\Res (\Dl _j)
$$
with equal multiplicities; the family of resonances near a point $-E_0<-1$ lies in the
upper half plane.
\label{ZCMM2026d:susy_lemma}
\end{lemma}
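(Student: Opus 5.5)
We first treat the Clifford algebra, then the undistorted and distorted operators, and finally transport the resonance set. Since $\alb_k^2=\beb^2=\Idb_4$, the matrix $\Jb=\alb_1\alb_2\alb_3\beb$ is a product of invertible matrices. Its inverse is $\Jb^{-1}=\beb\alb_3\alb_2\alb_1$. For anticommutation, move $\alb_k$ across the four factors of $\Jb$ using \eqref{ZCMM2026d:anticommutation}. It anticommutes with three of them and commutes with itself, which gives three sign changes; hence $\alb_k\Jb=-\Jb\alb_k$. Likewise $\beb$ anticommutes with $\alb_1,\alb_2,\alb_3$ and commutes with $\beb$, again three sign changes, so $\beb\Jb=-\Jb\beb$. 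Since $\Jb$ is a constant matrix, it commutes with $-i\hbar\partial_k$ and with multiplication by the scalar $m_j(x)$. From \eqref{ZCMM2026d:dirac_mass} we get
\[
\Jb\Dl_j\Jb^{-1}=-i\hbar\sum_k(\Jb\alb_k\Jb^{-1})\partial_k+(\Jb\beb\Jb^{-1})m_j=-\Dl_j .
\]
This holds on $C_0^\infty$, and $\Jb$ preserves $H^1$, so it holds as an operator identity.

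For the distorted operator, note that $U_\th$ acts componentwise on $\C^4$-valued functions. It is pullback by $\f_\th$ times the scalar factor $\Jc_\th^{1/2}$. Hence $U_\th$ commutes with the constant matrix $\Jb$, first for real $\th$ on $\Bc$ and then for $\th\in D_\ve$ by analyticity. Moreover $\Jb$ maps $\Bc$ to itself, because $\Ac$ is stable under constant linear maps on $\C^4$. Therefore $\Jb\Dl_{j,\th}\Jb^{-1}=U_\th\Jb\Dl_j\Jb^{-1}U_\th^{-1}=-\Dl_{j,\th}$. Alternatively, one can check this directly on the explicit form $\Dl_{0,\th}+\beb\, v_j\circ\f_\th$: the distorted kinetic part is $\sum_k$ of $\alb$'s with scalar differential coefficients, which we take from Khochman's construction.

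It follows that $\Jb(\Dl_{j,\th_0}-z)^{-1}\Jb^{-1}=-(\Dl_{j,\th_0}+z)^{-1}$. Hence $z$ is an eigenvalue of $\Dl_{j,\th_0}$ exactly when $-z$ is. Conjugating the Riesz projection of Definition~\ref{ZCMM2026d:definition_resonances} by $\Jb$ and substituting $z\mapsto -z$ maps the positively oriented circle $\g_{z_0}$ to a positively oriented circle about $-z_0$. The two sign changes cancel, so the projection about $-z_0$ is similar to the one about $z_0$ and has the same rank. Thus the multiplicities agree.

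\textbf{Main obstacle.} The delicate point is the domain of validity. The set $S_{\th_0}\cup\R$ in Definition~\ref{ZCMM2026d:definition_resonances} must itself be symmetric under $z\mapsto -z$. This holds because $\G_\th$ in \eqref{ZCMM2026d:gamma_theta} is symmetric under $z\mapsto-z$, and so is the union defining $S_{\th_0}$. Independence of $\th_0$ (Proposition~\ref{ZCMM2026d:khochmanprop}(iii)) then identifies the eigenvalues with resonances on both sides.

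For the final clause, the resonances near $E_0>1$ produced by continuation from $\im z\ge 0$ across $[1,\infty)$ lie in $\im z<0$. The reflection $z\mapsto -z$ maps $\im z<0$ to $\im z>0$, so their images near $-E_0$ lie in the upper half plane. This is consistent with the continuation from $\{\im z\le 0,\re z<1\}$ across $(-\infty,-1]$ in Proposition~\ref{ZCMM2026d:khochmanprop}(i).
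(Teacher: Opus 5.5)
Your proposal is correct and follows the paper's proof: the three sign changes in the Clifford algebra, the commutation of the constant matrix $\Jb$ with $U_\th$, and the $z\mapsto -z$ symmetry of $\G_\th$ and hence of $S_{\th_0}$. Your explicit conjugation of the Riesz projection, using the orientation-preserving substitution $z\mapsto -z$, simply spells out the equality of multiplicities that the paper leaves to Definition~\ref{ZCMM2026d:definition_resonances}.
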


\begin{proof}
In the Clifford algebra generated by the four anticommuting involutions
$\alb _1,\alb _2,\alb _3,\beb $, the volume element $\Jb $ satisfies $\Jb e = (-1)^3 e\Jb$
for each generator $e$, because $e$ has to be moved past the three remaining generators.
Hence $\Jb $ anticommutes with $-i\hbar \sum _k \alb _k \partial _k$ and with
$\beb m_j(x)$, the scalar function $m_j$ commuting with $\Jb $; thus
$\Jb \Dl _j \Jb ^{-1} = - \Dl _j$. Since $\Jb $ is a constant matrix it commutes with
$U_\th $, which acts by multiplication by a scalar function composed with a change of
variables, and the same identity holds for $\Dl _{j,\th }$. As $\G _\th $, hence
$S_{\th _0}$, is symmetric under $z\mapsto -z$ by \eqref{ZCMM2026d:gamma_theta}, the assertion
follows from Definition~\ref{ZCMM2026d:definition_resonances}.
\end{proof}

\begin{remark}
The chirality matrix alone does not produce this symmetry: in the standard representation
it anticommutes with $\beb $ but commutes with the $\alb _k$, so that conjugation by it
maps $\Dl _j$ to $-i\hbar \sum _k \alb _k\partial _k - \beb m_j(x) \ne -\Dl _j$.
\end{remark}

\subsection{Trace formulas}
\label{ZCMM2026d:trace_formulas}
Herein we recall two trace formulas that will be used in conjunction to give
Theorem~\ref{ZCMM2026d:thm_rel}. This is analogous to Sj\"{o}strand \cite{sjostrand97_2} and
Nedelec \cite{nedelec01} who studied the case of Schr\"{o}dinger type operators, and to
\cite{jkmm14} for the electric Dirac case.

In Khochman \cite{khochman07} the following local trace formula in the spirit of
Sj\"{o}strand \cite{sjostrand97} is proved. See also \cite{bruneau_petkov03} and
\cite{nedelec01} for similar results. To state it we make the following assumption on
$\O \subset \C $:
\newline
\newline
\noindent
\textbf{Assumption} $(\Ab _\O ^\pm )$: $\O $ is an open, simply connected and relatively
compact subset of $\{ z \in \C :|\re z|>1\}$ such that $\O \cap \C _\pm \ne \emptyset $ and there exists
$\th _0 \in D_\ve ^+$ such that $\overline{\O } \cap \G _{\th _0} = \emptyset $.
\newline

\noindent
Then one has:

\begin{theorem}
Assume $\O \subset \C $ satisfies ${\rm (}\Ab _\O ^\pm {\rm )}$ and assume in addition that $I:=\O \cap \R $ is an interval.
Let $f$ be holomorphic in a neighbourhood of $\overline\Omega$ and $\h \in \ccs (\R )$ (independent of $\hbar $) be such that (with $d(I,E)$ denoting the distance from $I$ to a point $E$)
\begin{equation*}
\h (E) = \left \{ \begin{array}{ll}
0, & d(I,E) > 2\e_{\star} , \\
1, & d(I,E) < \e_{\star} ,
\end{array} \right.
\end{equation*}
with $\e_{\star} >0$ sufficiently small. Then, for $\Pot _j$ satisfying Assumption
${\rm (}\Ab _\d {\rm )}$ with $\d >3$, one has
\begin{equation*}
 \Tr \big [(\h f)(\Dl _2) - (\h f)(\Dl _1)\big ] = \sum _{z_j \in \Res (\Dl _2)\cap \O } f(z_j)
- \sum _{z_j \in \Res (\Dl _1)\cap \O } f(z_j) + E_{\O , f, \h }(\hbar ),
\end{equation*} 
where
\begin{equation*}
|E_{\O , f , \h }(\hbar )| \le C(\h , \O ) \sup \{ |f(z)| \, : \,
d(\pd \O , z) < 2\e_{\star} , \, \im z \le 0 \} \hbar ^{-3}.
\end{equation*}
\label{ZCMM2026d:local_trace_formula}
\end{theorem}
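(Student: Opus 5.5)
This is Sjöstrand's local trace formula adapted to the Dirac setting. I would follow Sjöstrand's argument \cite{sjostrand97} as implemented by Khochman \cite{khochman07}, working with the distorted operators $\Dl_{j,\th_0}$.

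\textbf{Step 1: almost-analytic representation.} I choose an almost analytic extension $\tilde\h$ of $\h$, with $\bar\pd\tilde\h$ supported in $\{\e_\star\le d(I,\re z)\le 2\e_\star\}$ and vanishing to infinite order on $\R$. The Helffer--Sjöstrand formula then gives
\[
(\h f)(\Dl_j)=-\frac1\pi\int \bar\pd(\tilde\h f)(z)\,(z-\Dl_j)^{-1}\,L(dz).
\]
Since $\Dl_2-\Dl_1=\beb(v_2-v_1)$ with $\d>3$, the difference of resolvents is trace class. I then deform the resolvent into the distorted one. The decomposition $(z-\Dl_j)^{-1}$ versus $U_{\th}^{-1}(z-\Dl_{j,\th})^{-1}U_\th$ is valid in $\im z>0$ by analytic continuation in $\th$, and the identity $\Tr[(z-\Dl_{2,\th})^{-1}-(z-\Dl_{1,\th})^{-1}]=\Tr[(z-\Dl_2)^{-1}-(z-\Dl_1)^{-1}]$ holds there. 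In this way the part of the integral lying in $\im z\le0$ becomes an integral of the distorted relative resolvent trace.

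\textbf{Step 2: isolate resonances via a determinant.} I write the relative distorted resolvent trace as $\pd_z\log D(z)$, where
\[
D(z)=\det\bigl((\Dl_{2,\th_0}-z)(\Dl_{1,\th_0}-z)^{-1}\bigr),
\]
suitably regularised, for instance after inserting a finite-rank reference operator as in Sjöstrand. Its zeros and poles in $\O$ are the resonances of $\Dl_2$ and of $\Dl_1$ respectively, counted with multiplicity. Stokes' formula on $\O$ then produces $\sum f(z_j)$ over $\Res(\Dl_2)\cap\O$ minus the same sum over $\Res(\Dl_1)\cap\O$. What remains is an integral of $\bar\pd\tilde\h\, f\,\pd_z\log(D/\prod(z-z_j))$ over the annular region near $\pd\O$.

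\textbf{Step 3: bound the remainder.} I need $\log|D(z)|\le C\hbar^{-3}$ on a neighbourhood of $\overline\O$, and $\log|D|\ge -C\hbar^{-3}$ at some points of the upper half plane. The upper bound comes from writing $D$ as a determinant $\det(1+K)$, with $K$ trace class and $\|K\|_{\rm tr}=\Oc(\hbar^{-3})$. This uses Lemma~\ref{ZCMM2026d:resolvent_lemma}, together with the fact that microlocally outside a compact set in phase space the distorted operators are elliptic. That part is handled by a semiclassical parametrix, and the remaining trace-class contribution is controlled by $\hbar^{-3}$ via Weyl-type volume counting. The lower bound holds for $\im z\gg\hbar$, where the resolvents are bounded by the self-adjoint estimates. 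A Cartan/Jensen-type lemma (the harmonic-measure argument of Sjöstrand) then gives $|\pd_z\log(D/\prod)|\le C\hbar^{-3}$ on the support region, after a slight shrinking of contours. This yields the bound on $E_{\O,f,\h}$ in terms of $\sup|f|$ over $\{d(\pd\O,z)<2\e_\star,\ \im z\le0\}$. The upper-half-plane contributions are estimated directly and are also $\Oc(\hbar^{-3})$.

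I expect Step 3 to be the main obstacle, and specifically the trace-norm estimate $\Oc(\hbar^{-3})$ for the distorted non-normal Dirac operators near the thresholds $\pm1$. This is where the hypothesis $\overline\O\cap\G_{\th_0}=\emptyset$ and the Clifford identity \eqref{ZCMM2026d:clifford} replace normality.
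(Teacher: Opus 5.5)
The paper does not prove this theorem itself. It quotes it from Khochman \cite{khochman07}, whose argument is Sj\"ostrand's, so your overall route is the intended one. As written, however, two steps fail.

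\textbf{The resolvent difference is not trace class.} Your Step~1 claims that $(z-\Dl_2)^{-1}-(z-\Dl_1)^{-1}$ is trace class because $\d>3$. For the first-order Dirac operator on $\R^3$ this is false.
\begin{itemize}
\item The difference is $(z-\Dl_2)^{-1}\beb(v_2-v_1)(z-\Dl_1)^{-1}$. Its symbol has size $\kum{x}^{-\d}\kum{\x}^{-2}$, and $\kum{\x}^{-2}$ is not integrable on $\R^3$; decay in $x$ cannot compensate.
\item Hence the ``relative distorted resolvent trace'' of Step~2 is undefined.
\item Your $D(z)$ is not a Fredholm determinant either: $(\Dl_{2,\th_0}-z)(\Dl_{1,\th_0}-z)^{-1}-1=\beb\,(v_2-v_1)\circ\f_{\th_0}\,(\Dl_{1,\th_0}-z)^{-1}$ lies only in $\mathcal{S}_p$ for $p>3$.
\end{itemize}
This is a high-frequency defect, so inserting a finite-rank operator does not repair it. It is exactly where the Dirac case departs from the low-dimensional Schr\"odinger case; the paper records this fact after Lemma~\ref{ZCMM2026d:stokes_lemma}. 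What you need instead is, for each $j$ separately, a reference operator $\hat\Dl_{j,\th_0}$ such that $\Dl_{j,\th_0}-\hat\Dl_{j,\th_0}$ is compactly microlocalised with trace norm $\Oc(\hbar^{-3})$, and $\hat\Dl_{j,\th_0}-z$ is invertible on the relevant compact set. Then $D_j(z)=\det\bigl((\Dl_{j,\th_0}-z)(\hat\Dl_{j,\th_0}-z)^{-1}\bigr)$ are genuine determinants whose zeros are the resonances of $\Dl_j$. Traces may be taken only of such determinants, or of $\bar\pd(\tilde\h f)$-integrated (hence compactly microlocalised) operators. Incidentally, the thresholds $\pm1$ you single out as the main obstacle play no role, since $\overline\O\subset\{|\re z|>1\}$ and $\overline\O\cap\G_{\th_0}=\emptyset$.

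\textbf{The upper half-plane must be eliminated, not estimated.} The error bound involves $\sup|f|$ over $\{\im z\le0\}$ only. Remark~\ref{ZCMM2026d:remark_differences}(iii) shows this is essential: the wave packets $f_j$ are exponentially large on the upper edge of $\O$. Estimating the upper-half-plane contributions ``directly'' produces an extra term $C\hbar^{-3}\sup\{|f(z)|: d(\pd\O,z)<2\e_\star,\ \im z>0\}$, so it does not prove the theorem as stated.

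Step~1 also puts the distortion identity in the wrong half-plane. Below the real axis the Helffer--Sj\"ostrand integrand is the physical resolvent, which does not see the resonances; the identity is available only for $\im z>0$.

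The missing idea is that no $\bar\pd$-integral over $\C_+$ may survive. One way to arrange this:
\begin{itemize}
\item Write the left-hand side as the pairing of $\h f$ with the derivative of the spectral shift function.
\item Near $I$, express that derivative as $\frac{1}{2\pi i}\bigl(F(\lambda)-\overline{F(\lambda)}\bigr)$. Here $F$ is built from $\pd_z\log D_2-\pd_z\log D_1$, is meromorphic near $\overline\O$, and has the resonances as its only poles.
\item Deform $\int\h f\,F\,d\lambda$ into $\{\im z\le0\}$ by Stokes' formula; the residues give the two resonance sums.
\item Deform the term with $\overline{F(\bar z)}$ downward as well. It picks up no residues, because its poles are the mirror images $\bar z_k\in\C_+$.
\item Both remainders are then controlled by the Cartan factorisation $D_j=e^{G_j}\prod(z-z_k)$ with $|G_j'|=\Oc(\hbar^{-3})$. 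This factorisation must hold on a domain containing $\supp\tilde\h$ and its reflection in $\R$. It follows from your lower bound at a point of $\O\cap\C_+$ together with the upper bound $\log|D_j|\le C\hbar^{-3}$ on that domain.
\end{itemize}
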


We shall also need the accompanying upper bound of \cite{khochman07}, which is used in an
essential way in Step 5 of the proof of Theorem~\ref{ZCMM2026d:thm_rel} and which we
therefore state separately.

\begin{theorem}
Under the hypotheses of Theorem~\ref{ZCMM2026d:local_trace_formula}, one has
\begin{equation}
\# \big ( \Res (\Dl _j) \cap \O \big ) = \Oc (\hbar ^{-3}), \qquad j=1,2 .
\label{ZCMM2026d:upper_bound}
\end{equation}
\label{ZCMM2026d:upper_bound_thm}
\end{theorem}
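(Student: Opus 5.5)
The statement to prove is Theorem~\ref{ZCMM2026d:upper_bound_thm}, the bound $\#(\Res(\Dl_j)\cap\O)=\Oc(\hbar^{-3})$. It is quoted from Khochman \cite{khochman07}. I would reproduce his argument in outline: reduce to counting zeros of a holomorphic determinant and bound that count by Jensen's inequality.

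\textbf{Step 1: reduction to a determinant.}
Fix $\th_0\in D_\ve^+$ with $\overline\O\cap\G_{\th_0}=\emptyset$. Write $\Dl_{j,\th_0}=\Dl_{0,\th_0}+W$, where $W$ gathers the contribution of the potential and of the distortion inside a compact region. By Definition~\ref{ZCMM2026d:definition_resonances}, the resonances in $\O$ are the eigenvalues of $\Dl_{j,\th_0}$ there. I would then construct an auxiliary operator $\widetilde\Dl_{j,\th_0}=\Dl_{j,\th_0}+K$ with the following properties:
\begin{itemize}
\item $K$ is an $\hbar$-pseudodifferential operator whose symbol is compactly supported in a ball of phase space;
\item $K$ is chosen so that $\widetilde\Dl_{j,\th_0}-z$ is invertible for $z$ in a neighbourhood of $\overline\O$;
\item $\|(\widetilde\Dl_{j,\th_0}-z)^{-1}\|=\Oc(1)$ there.
\end{itemize}
The invertibility comes from ellipticity of the distorted symbol away from the compact set where $\tau_j(x,\x)$ can equal $\re z$, together with the Clifford identity \eqref{ZCMM2026d:clifford} and Lemma~\ref{ZCMM2026d:resolvent_lemma} at infinity. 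Then
\[
\Dl_{j,\th_0}-z=\bigl(1-K(\widetilde\Dl_{j,\th_0}-z)^{-1}\bigr)(\widetilde\Dl_{j,\th_0}-z).
\]
Hence the resonances in $\O$, counted with multiplicity, are the zeros of
\[
D(z)=\det\bigl(1-K(\widetilde\Dl_{j,\th_0}-z)^{-1}\bigr).
\]
Here $K$ is trace class with $\|K\|_{\mathrm{tr}}=\Oc(\hbar^{-3})$, being the quantization of a compactly supported symbol on $\R^6$.

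\textbf{Step 2: bounds on $D$ and Jensen's inequality.}
First, from $|\det(1+A)|\le e^{\|A\|_{\mathrm{tr}}}$ we get $\log|D(z)|\le C\hbar^{-3}$ on a neighbourhood of $\overline\O$.

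Second, we need a lower bound $\log|D(z_0)|\ge -C\hbar^{-3}$ at some point $z_0$. I would pick $z_0\in\O\cap\C_+$, which is nonempty by Assumption $(\Ab_\O^\pm)$, at distance $\gtrsim1$ from the real axis. There Proposition~\ref{ZCMM2026d:khochmanprop}(iv) and the self-adjointness of $\Dl_j$ give $\|(\Dl_{j,\th_0}-z_0)^{-1}\|=\Oc(1)$. Write
\[
D(z_0)^{-1}=\det\bigl(1+K(\Dl_{j,\th_0}-z_0)^{-1}\bigr).
\]
Its modulus is at most $e^{C\hbar^{-3}}$, which gives the lower bound.

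Since $\O$ is simply connected and relatively compact, I would cover $\overline\O$ by a finite chain of discs joining to $z_0$, enlarging $\O$ slightly while staying off $\G_{\th_0}$. Jensen's formula on each disc then bounds the number of zeros by $C\hbar^{-3}$. The lower bound propagates along the chain by the Harnack/Cartan-type lemma for $\log|D|$, which yields points with $\log|D|\ge -C\hbar^{-3}$ in each subsequent disc.

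\textbf{Main obstacle.}
The hard part is Step 1: the uniform-in-$\hbar$ invertibility of $\widetilde\Dl_{j,\th_0}-z$. The distorted operator is not normal, so the resolvent norm is not controlled by the distance to the spectrum. I would handle it with a semiclassical parametrix $\bigl(\dsym_{j,\th_0}+\bar z\bigr)$-type matrix symbol divided by the scalar $\tau_{j,\th_0}^2-z^2$, mirroring \eqref{ZCMM2026d:clifford}. The symbol is elliptic outside the compact energy shell, since the non-degeneracy $c_0>0$ keeps the two branches separated. The shell is then absorbed by $K$ (for instance $K=-iC\chi^w$). The remainders are $\Oc(\hbar)$ and are removed by a Neumann series.
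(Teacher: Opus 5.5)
The paper does not prove this statement: it is quoted from Khochman \cite{khochman07}, and your determinant-plus-Jensen outline is the Sj\"ostrand-type argument behind that citation. Step~1 and the upper bound $\log|D|\le C\hbar^{-3}$ are fine in outline. \textbf{The gap is the lower bound at the reference point.} You derive $\|(\Dl_{j,\th_0}-z_0)^{-1}\|=\Oc(1)$ for $z_0\in\O\cap\C_+$ from Proposition~\ref{ZCMM2026d:khochmanprop}(iv) and the self-adjointness of $\Dl_j$, and neither gives it.

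\begin{itemize}
\item Part (iv) is qualitative. For each fixed $\hbar$ it excludes eigenvalues of $\Dl_{j,\th_0}$ in $\Sigma$, but it says nothing about the size of the resolvent, let alone uniformly in $\hbar$.
\item Self-adjointness of $\Dl_j$ does not transfer. For $\im\th_0>0$ the map $U_{\th_0}$ is not unitary and $\Dl_{j,\th_0}$ is not normal, so its resolvent norm is not governed by the distance to its spectrum; the paper stresses this before Lemma~\ref{ZCMM2026d:resolvent_lemma}.
\end{itemize}

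Near the real axis the bound can genuinely fail. At $\x=0$ the distorted symbol has eigenvalues $\pm m_j(x+\th_0\varkappa(x))$, whose imaginary part is approximately $\im\th_0\,\nabla v_j(x)\cdot\varkappa(x)$. This can be positive in the zone where $\varkappa$ is switched on. So a point $z_0\in\C_+$ at distance $\Oc(|\th_0|)$ from $\R$ may lie in the range of the distorted symbol. There the resolvent of a non-normal semiclassical operator is in general not $\Oc(1)$ (semiclassical pseudospectrum), even though $z_0$ is not an eigenvalue. Without a uniform bound, the identity $D(z_0)^{-1}=\det\bigl(1+K(\Dl_{j,\th_0}-z_0)^{-1}\bigr)$ yields nothing of order $\hbar^{-3}$.

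\textbf{The repair} uses the ellipticity mechanism you already invoke in Step~1, now with $K=0$.
\begin{itemize}
\item The branch of $\G_{\th_0}$ over $\{\re z>1\}$ lies in $\C_-$. So you may replace $\O$ by a simply connected $\O'\supset\O$, still satisfying $(\Ab_\O^\pm)$, that reaches a point $z_0$ with $\im z_0$ large. Counting in $\O'$ only overestimates.
\item Since $\dsym_{j,\th_0}^2=s\,\Idb_4$ with $s$ a complex scalar, for such $z_0$ the matrix $\dsym_{j,\th_0}(x,\x)-z_0$ is invertible for all $(x,\x)$. Its inverse $(\dsym_{j,\th_0}+z_0)/(s-z_0^2)$ is bounded by $C\kum{\x}^{-1}$.
\item The semiclassical parametrix then gives $\|(\Dl_{j,\th_0}-z_0)^{-1}\|=\Oc(1)$ for small $\hbar$, hence $\log|D(z_0)|\ge -C\hbar^{-3}$.
\item Step~1 and the Jensen chain must then be run on $\O'$.
\end{itemize}

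Three smaller points.
\begin{itemize}
\item With $K=-iC\chi^w$, the cut-off $\chi$ must equal $1$ on the compact set where some eigenvalue of the distorted symbol lies in $\overline{\O'}+i[0,C]$. Otherwise the zone $0<\chi<1$ creates new zeros of the symbol.
\item The Clifford parametrix has numerator $\dsym+z$, not $\dsym+\bar z$.
\item For $\O\subset\{\re z<-1\}$ the reference point must be taken in $\C_-$, or you reduce to positive energies via Lemma~\ref{ZCMM2026d:susy_lemma}.
\end{itemize}
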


Thus the lower bound proved in this paper is established by \emph{using} the matching
upper bound; the two results are not independent of each other.

The second trace formula is a Weyl-type formula for the difference of two functions of the
operators. In the scalar case it is due to Bruneau and Robert
\cite[p.\ 21-22]{bruneau_robert99}. Since our perturbation is matrix valued we prove, in
Appendix~\ref{ZCMM2026d:appendix_trace}, the following version; it requires nothing but
Hermiticity and ellipticity of the symbols and the decay of the difference of the
potentials, and in particular no gap condition between the eigenvalue branches.

\begin{theorem}
Let $\Pot _1, \Pot _2 \in \cs (\R ^3 ; \Mfour )$ be Hermitian and such that
$|\partial ^\a \Pot _j| \le C_\a \kum{x}^{-|\a |}$ and
$|\partial ^\a (\Pot _2 - \Pot _1)| \le C_\a \kum{x}^{-\d -|\a |}$ for all $\a $, with
$\d >3$. Then, for every $g \in \ccs (\R )$, the operator $g(\Dl _2) - g(\Dl _1)$ is of
trace class, with $\| g(\Dl _2) - g(\Dl _1)\| _{\Sone } \le C(g)\hbar ^{-3}$, and
\begin{equation}
\Tr \big [g(\Dl _2) - g(\Dl _1) \big ]  = (2\pi \hbar )^{-3}
\int \limits _{\R ^6} \Big ( \tr [g(\dsym _2)] - \tr [g(\dsym _1)] \Big )\, dx \, d\x + \Oc \big ( C(g)\hbar ^{-2}\big ),
\label{ZCMM2026d:robert_trace_formula}
\end{equation}
where ``$\tr $'' denotes the matrix trace and $C(g)$ is controlled by finitely many
seminorms of $g$ together with $\sup \{ |s| \, : \, s \in \supp g \}$.
\label{ZCMM2026d:weyl_trace_formula}
\end{theorem}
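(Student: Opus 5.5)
Since $\Dl_2-\Dl_1=\Pot_2-\Pot_1$ is bounded, I would not work with $g(\Dl_j)$ directly. Instead I would write $g(\Dl_j)$ through the Helffer--Sj\"ostrand formula,
\[
g(\Dl_j)=-\frac{1}{\pi}\int_{\C}\bar\partial\tilde g(z)\,(z-\Dl_j)^{-1}\,L(dz),
\]
with $\tilde g$ an almost analytic extension of $g$ satisfying $|\bar\partial\tilde g|\le C_N|\im z|^N$. The second resolvent identity gives
\[
(z-\Dl_2)^{-1}-(z-\Dl_1)^{-1}=(z-\Dl_2)^{-1}(\Pot_2-\Pot_1)(z-\Dl_1)^{-1},
\]
so everything reduces to the trace class operator built from $\Wpot:=\Pot_2-\Pot_1$. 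Write $\Wpot=\kum{x}^{-\d/2}\,\mathbf{B}\,\kum{x}^{-\d/2}$ with $\mathbf{B}$ bounded. The factor $\kum{x}^{-\d/2}\chi(\Dl_j)$, for $\chi\in\ccs$, is Hilbert--Schmidt with norm $\Oc(\hbar^{-3/2})$. This holds because its symbol lies in $L^2(\R^6)$: $\d>3$ makes $\kum{x}^{-\d}$ integrable, and $\chi(\dsym_j)$ is compactly supported in $\x$ by Lemma~\ref{ZCMM2026d:symbol_eigenvalues}. To insert the cutoffs I would use $g=g\chi^2$ together with the functional calculus, splitting each resolvent into a part localized by $\chi(\Dl_j)$ and a remainder on which $\bar\partial\tilde g$ vanishes to infinite order. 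The remainder is handled by ellipticity of $\Dl_j-z$ away from $\supp g$. The result is the bound $\|g(\Dl_2)-g(\Dl_1)\|_{\Sone}\le C(g)\hbar^{-3}$.

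For the asymptotics, the main step is a semiclassical parametrix for $(z-\Dl_j)^{-1}$ in the matrix-valued Weyl calculus. Take the symbol
\[
r_j(z)=(z-\dsym_j)^{-1}+\hbar\, r_{j,1}(z)+\cdots .
\]
No diagonalization of $\dsym_j$ is needed: $(z-\dsym_j)^{-1}$ exists for $\im z\neq0$ with bounds $|\im z|^{-1}$, and its $x,\x$ derivatives lose powers of $|\im z|^{-1}$. These losses are absorbed by $\bar\partial\tilde g=\Oc(|\im z|^N)$. For that reason only Hermiticity is used, and no gap between branches, consistent with the remark before the statement. The difference of the parametrices is $\kum{x}^{-\d}$-decaying in $x$, because every term contains $\Wpot$ or its derivatives. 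The composition error is $\hbar^{2}$ times a symbol in $\kum{x}^{-\d}$, times a negative power of $|\im z|$.

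The trace of a Weyl operator with integrable symbol equals $(2\pi\hbar)^{-3}$ times the integral of the matrix trace of the symbol. The leading term then gives
\[
-\frac1\pi\int\bar\partial\tilde g\,\tr\big[(z-\dsym_2)^{-1}-(z-\dsym_1)^{-1}\big]=\tr g(\dsym_2)-\tr g(\dsym_1).
\]
The $\hbar r_{j,1}$ term contributes $\Oc(\hbar^{-2})$, which suffices for the stated error. Its contribution in fact integrates to zero by the usual parity of the Weyl calculus, but that is not required. The remaining errors are bounded in trace norm by $\hbar^{-3}\cdot\hbar$ using the same Hilbert--Schmidt factorization. The constants depend on finitely many seminorms of $g$ through $\tilde g$, and on $\sup|\supp g|$ through the $\x$-support of the cutoffs.

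The main obstacle is the symbolic remainder estimate near the real axis. One must check that the matrix-valued parametrix remainders carry both the spatial decay $\kum{x}^{-\d}$ and polynomial bounds in $|\im z|^{-1}$, uniformly in $\hbar$, so that they are trace class with norm $\Oc(\hbar^{-2})$ after integration against $\bar\partial\tilde g$. I would first prove this for $\hbar$-independent compactly supported cutoffs in $\x$, using the symbol bounds $|\partial^\a\Pot_j|\le C_\a\kum{x}^{-|\a|}$. I would then use the trace class estimate for pseudodifferential operators whose symbols have integrable derivatives, as in Robert's book.
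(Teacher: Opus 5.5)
\textbf{There is a genuine gap: the trace-class control of everything that is not energy-localized.} Your outline shares the paper's skeleton: Helffer--Sj\"ostrand, the second resolvent identity, a matrix parametrix without diagonalisation, and $z$-integration of the leading symbol to get $g(\dsym_2)-g(\dsym_1)$. For a first-order operator in $\R^3$, however, the step that carries the weight is controlling in trace norm whatever is not energy-localized. Your Hilbert--Schmidt factorization does not deliver this.

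\emph{The cross terms.} The resolvent symbol $(z-\dsym_j)^{-1}$ has order $-1$ only. Hence $\kum{x}^{-\delta/2}(1-\chi)(\Dl_j)(z-\Dl_j)^{-1}$ has a symbol of size $\kum{x}^{-\delta/2}\kum{\xi}^{-1}\notin L^2(\R^6)$, so it is not Hilbert--Schmidt. Ellipticity away from $\supp g$ gains exactly this one order and no more. Now insert $\chi(\Dl_j)$ on each side of $g(\Dl_2)-g(\Dl_1)$:
\begin{itemize}
\item the doubly localized piece is handled by your factorization;
\item the doubly non-localized piece vanishes identically;
\item the two cross pieces are exactly $-(1-\chi)(\Dl_2)g(\Dl_1)$ and $g(\Dl_2)(1-\chi)(\Dl_1)$.
\end{itemize}
The cross pieces are not holomorphic in $z$, and they are not products of two of your Hilbert--Schmidt factors. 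They are also not negligible for an arbitrary $\chi$: their principal symbols, such as $(1-\chi)(\dsym_2)g(\dsym_1)$, involve the non-commuting eigenprojections of $\dsym_1$ and $\dsym_2$. Even with a suitably wide plateau for $\chi$, proving them trace class and small needs a full symbolic expansion with an integrable remainder. That is the machinery your factorization was meant to replace.

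\emph{The parametrix remainder.} The same problem recurs in your asymptotics. An $\hbar^2$ parametrix error carries $\kum{x}^{-\delta}$ decay but only finite decay in $\xi$. It also multiplies an exact resolvent, so neither energy cutoffs nor holomorphy in $z$ localize it. Spatial decay plus polynomial bounds in $|\im z|^{-1}$, which is what you list as the thing to check, do not make such an operator trace class on $L^2(\R^3)$. The missing requirement is integrability in $\xi$.

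\textbf{How the paper closes this.} It uses no energy cutoffs, and obtains trace class, the $C(g)\hbar^{-3}$ bound and the $\Oc(\hbar^{-2})$ error in one stroke, from two ingredients.
\begin{itemize}
\item It integrates in $z$ at the symbol level \emph{before} tracing. For $\kum{\xi}\ge\Lambda$ the expansion symbols $\cb_0=\eb_2\Wpot\eb_1$ and $\cb_1$ are holomorphic in $z$ near $\supp\tilde g$. By Lemma~\ref{ZCMM2026d:stokes_lemma}, $\sigma_0$ and $\sigma_1$ therefore vanish there and are integrable, although $\cb_0$ has only order $-2$. The part of order $\hbar^2$ already lies in $\Sigma^{\delta+2,4}$.
\item It carries the parametrices to order $N=4$, using a right parametrix for $(\Dl_2-z)^{-1}$ and a left one for $(\Dl_1-z)^{-1}$. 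Every remainder then contains a symbol in $\Sigma^{\delta+N,N}$ or $\Sigma^{\delta+2N,2N}$ with $N>3$. Such a symbol is trace class by (Q3), and the other factors are bounded.
\end{itemize}
To repair your route you need both ingredients in some form: localization in $\xi$ through holomorphy after $z$-integration, and remainders with $\xi$-decay of order strictly greater than $3$.
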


The mass-type potentials $\Pot _j = \beb v_j$ of \eqref{ZCMM2026d:dirac_mass} satisfy the hypotheses in 
Theorem~\ref{ZCMM2026d:weyl_trace_formula}  by Lemma~\ref{ZCMM2026d:cauchy_estimates}. By Lemma~\ref{ZCMM2026d:symbol_eigenvalues} and \eqref{ZCMM2026d:trace_of_g} we may write, for
$g = \h f$,
\begin{eqnarray}
\lefteqn{ \hspace*{-2.5cm} \int \limits _{\R ^6}  \Big ( \tr [(\h f)(\dsym _2)] - \tr [(\h f)(\dsym _1)]\Big )
\, dx \, d\x = 2 \int \limits _{\R ^6} \Big ( (\h f)(p_{+,2}(x,\x )) - (\h f)(p_{+,1}(x,\x )) } \nonumber \\
& & \hspace*{1cm}+ (\h f)(p_{-,2}(x,\x )) - (\h f)(p_{-,1}(x,\x )) \Big ) \, dx\, d\x = 2 \langle \o , \h f \rangle ,
\label{ZCMM2026d:robert_trace_simplified}
\end{eqnarray}
where the distribution $\o $ is defined in \eqref{ZCMM2026d:omega_def} below.

\begin{remark}
The explicit dependence of the error term in \eqref{ZCMM2026d:robert_trace_formula} on $g$
matters: in the proof of Theorem~\ref{ZCMM2026d:thm_rel} the formula is applied to a family of
FBI wave packets whose seminorms grow, and it is essential that the packet be fixed before
$\hbar$ is allowed (or taken) to tend to zero.
\label{ZCMM2026d:remark_gdep}
\end{remark}

\section{From analytic singularities to resonances}
\label{ZCMM2026d:resonances_near_analytic_sing}
The analysis rests on two distributions on the real line: one attached to the pair of mass functions, one attached to the pair of symbols. For $\f \in \ccs (\R )$ we set
\begin{equation} 
\langle \m , \f \rangle := \int \limits _{\R ^3}\big ( \f (m_2(x)) - \f (m_1(x))\big )\, dx
\label{ZCMM2026d:mu_def}
\end{equation} 
and
\begin{equation} 
\langle \o , \f \rangle := \int \limits _{\R ^6}\Big ( \f (p_{+,2}(x,\x ))
- \f (p_{+,1}(x,\x )) + \f (p_{-,2}(x,\x )) - \f (p_{-,1}(x,\x ))\Big )\, dx \, d\x .
\label{ZCMM2026d:omega_def}
\end{equation} 
Thus $\m$ compares the two mass functions and $\o$ compares the two
symbols.  Both are relative objects, and, for a general test 
function $\f$, neither
\[
   \int_{\R^3}\f(m_j(x))\,dx
\]
nor its phase-space counterpart need converge separately. The distribution $\m$ is compactly supported and is of order at most one. Indeed,
\[
   \supp(\m)\subset    \left[      \min_{j=1,2}\inf_{x\in\R^3}m_j(x),  \max_{j=1,2}\sup_{x\in\R^3}m_j(x)    \right]  \subset(0,\infty),
\]
and, by the mean-value theorem,
\[
   |\langle \m,\f\rangle|    \leq    \|\f'\|_{\infty}    \int_{\R^3}|m_2(x)-m_1(x)|\,dx    =
   \|\f'\|_{\infty}    \|v_2-v_1\|_{L^1(\R^3)}.
\]
Here $v_2-v_1\in L^1(\R^3)$ by Assumption~$(\Ab _\d )$, since $\delta>3$. The distribution $\o$ is likewise of order at most one, but it is not compactly supported.  More precisely,
\[
   \supp(\o)\subset (-\infty,-c_0]\cup[c_0,\infty),
\]
because $p_{+,j}\geq c_0$ and $p_{-,j}\leq-c_0$.  To see the order-one bound, let $\supp\f\subset[-R,R]$.  Then $\f(p_{\pm,j}(x,\x))$ can be non-zero only when $\tau_j(x,\x)\leq R$, and hence only when $|\x|\leq R$.  Moreover, since
\[
 |\tau_2(x,\x)-\tau_1(x,\x)|    \leq |m_2(x)-m_1(x)|    =|v_2(x)-v_1(x)|,
\]
the mean-value theorem gives
\[
|\langle\o,\f\rangle|   \leq    2\, \mbox{vol}\, B(0,R)\,  \|\f'\|_{\infty}    \|v_2-v_1\|_{L^1(\R^3)}.
\]
Thus $\o\in \ddist(\R)$ locally with order at most one, while its support
may extend to arbitrarily large positive and negative energies.

On the half-line $E>1$ the distribution $\m $ can be read off from the distribution
functions of the two mass functions. Put
$$
\n _{+,j}(E) := \int \limits _{m_j(x)\ge E}dx , \qquad E>1 ,
$$
which is finite because $m_j \to 1$ at infinity, is decreasing, 
and $\n _{+,j}(E)>0$ exactly for $1<E<\sup m_j$ and 
$\n _{+,j}(E)=0$ for $E\ge \sup m_j$;  let $\m _{+,j} := -\, d\n _{+,j}/dE$ be the associated positive measure,
of locally finite mass on $(1,\infty )$, with
$\supp \m _{+,j} \subset [1, \sup m_j]$. For $\f \in \ccs ((1,\infty ))$ an integration by
parts gives $\langle \m _{+,j},\f \rangle = \int \f '(E)\n _{+,j}(E)\, dE = \int \f (m_j(x))\, dx$, so that
\begin{equation} 
\m |_{(1,\infty )} = \m _{+,2} - \m _{+,1} .
\label{ZCMM2026d:mu_restricted}
\end{equation} 
Only this half-line will be needed: by Corollary~\ref{ZCMM2026d:omega_even} below the
distribution $\o $ is even, and the analysis near $-E_0$ reduces to the analysis near
$E_0$.

\subsection{Main results}
Here we present and discuss the main results, i.e.\ how certain analytic singularities of
$\m $ defined in \eqref{ZCMM2026d:mu_def} generate resonances of the Dirac operator. We phrase
our main theorem for $E_0>1$.

\begin{theorem}
Suppose $\Dl _j = \Dl _0 + \Pot _j$ where $\Pot _j = \beb v_j$, $j=1,2$, satisfies
Assumption $(\Ab _\d )$.  Let $E_0 > 1$ be a boundary point of
$\supp (\m )$. Then, for any complex neighborhood $U$ of $E_0$ there exists a positive
constant $C = C(U)$ such that
$$
\sum _{j=1}^2 \# \big ( \Res (\Dl _j)\cap U\big ) \ge C \hbar ^{-3},
$$
provided $\hbar $ is small enough. The same holds near $-E_0$.
\label{ZCMM2026d:thm_rel}
\end{theorem}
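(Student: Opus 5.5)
The plan follows the chain sketched in the introduction: first produce an analytic singularity of $\m$, then move it to $\o$, then turn it into resonances.

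\emph{Singularity of $\m$.} Since $E_0>1$ is a boundary point of $\supp(\m)$ and $\m$ is real, Lemma~\ref{ZCMM2026d:endpoint_lemma} gives $(E_0,\pm1)\in\wfa(\m)$.

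\emph{Transfer to $\o$.} Next I would verify the representation \eqref{ZCMM2026d:intro_kernel}. For $E>0$ only the branch $p_{+,j}$ contributes. Integrating in $\x$ in polar coordinates with $s=\t_j$, so that $|\x|=(s^2-m_j^2)^{1/2}$ and $|\x|^2\,d|\x|=s(s^2-m_j^2)^{1/2}ds$, gives, for $\f\in\ccs((0,\infty))$,
\[
\langle\o,\f\rangle=\int_{\R^3}\big(\F_\f(m_2(x))-\F_\f(m_1(x))\big)\,dx=\langle\m,\F_\f\rangle .
\]
The change of variables is justified because the integrand has compact support in $\x$ and $v_2-v_1\in L^1$. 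Hence $\o|_{(0,\infty)}$ is the transpose of the kernel $K(E,t)=4\pi E(E^2-t^2)_+^{1/2}$ applied to $\m$.

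Put $u=E^2$ and $\tilde\m=q_*\m$ with $q(t)=t^2$, which is a real-analytic diffeomorphism near $E_0>1$. Then $\o$, pushed forward by $q$ and divided by the analytic nonvanishing factor coming from $ds$, equals $2\pi\,(\tilde\m * u_+^{1/2})$ near $E_0^2$. Since $\tilde\m$ is compactly supported, this is a convolution with a symbol proportional to $(i\x+0)^{-3/2}$. I would split this symbol into a high-frequency part, which is an elliptic classical analytic symbol, and a low-frequency part whose kernel is entire. Proposition~\ref{ZCMM2026d:elliptic_wavefront} together with \eqref{ZCMM2026d:wf_invariance} then gives $(E_0,\pm1)\in\wfa(\o)$, and $E_0$ is not a point of analyticity of $\o$. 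Evenness of $\o$, which follows from $\Jb$ in Lemma~\ref{ZCMM2026d:susy_lemma} or directly from $p_-=-p_+$, gives the statement at $-E_0$.

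\emph{Resonances.} Argue by contradiction, following Sj\"ostrand and Nedelec. Suppose that for some $U$ and a sequence $\hbar\to0$ we have $\sum_j\#(\Res(\Dl_j)\cap U)\le\ve(\hbar)\hbar^{-3}$ with $\ve(\hbar)\to0$. Choose $\O\subset U$ satisfying $(\Ab_\O^\pm)$ with $I=\O\cap\R\ni E_0$. Use FBI packets $f_\l(z)=e^{-\l(z-y)^2/2+i\l(z-y)\eta}$ with $y$ near $E_0$ and $\eta$ near $\pm1$. Subtract Theorem~\ref{ZCMM2026d:weyl_trace_formula}, which gives $2(2\pi\hbar)^{-3}\langle\o,\h f_\l\rangle+\Oc_\l(\hbar^{-2})$ via \eqref{ZCMM2026d:robert_trace_simplified}, from Theorem~\ref{ZCMM2026d:local_trace_formula}. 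With $\l$ fixed first (Remark~\ref{ZCMM2026d:remark_gdep}), this yields
\[
(2\pi)^{-3}2|\langle\o,\h f_\l\rangle|\le\sup_{\O}|f_\l|\,\ve(\hbar)+C\sup_{d(\pd\O,z)<2\e_\star,\ \im z\le0}|f_\l|+o(1).
\]
The resonance sum is controlled by the count times $\sup|f_\l|$; Theorem~\ref{ZCMM2026d:upper_bound_thm} is needed to handle the deformation of $\O$ in Step~5. Letting $\hbar\to0$ gives $|T_\l(\h\o)(y,\eta)|\le Ce^{\l(\text{small})}e^{-c\l}$ uniformly near $(E_0,\pm1)$, i.e.\ exponential decay. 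This uses that on $\pd\O$ away from the real axis the packet is exponentially small for a suitably chosen $\O$ shrinking toward $E_0$, with $\eta$ of the right sign selecting the lower half-plane. That contradicts $(E_0,\pm1)\in\wfa(\o)$.

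\emph{Main obstacle.} I expect the main difficulty to be this last step: choosing the geometry of $\O$ and of $(y,\eta)$ so that the boundary error of the local trace formula is exponentially smaller than the FBI transform, uniformly in a neighbourhood. This includes using the upper bound to pass between the domains $\O$ and $U$. The analytic-symbol splitting in the transfer step is routine by comparison.
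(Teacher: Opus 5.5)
Your proposal is correct. The first two stages match the paper's Lemma~\ref{ZCMM2026d:structure_lemma} and Corollary~\ref{ZCMM2026d:wf_corollary}: the endpoint lemma, then the substitution $u=E^2$, which turns the kernel into convolution with $2\pi u_+^{1/2}$, split into an elliptic analytic symbol of order $-3/2$ plus an entire low-frequency part.

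The last stage differs. The paper argues directly. It extracts $\alpha_j\to E_0$, $\beta_j\to1$, $\lambda_j\to\infty$, $\kappa_j\to0$ with $|\langle\omega,\chi f_j\rangle|\ge e^{-\kappa_j\lambda_j}$, and fixes one $j$ for which this beats the boundary error $e^{-C_0\lambda_j}$. It then converts the resonance sum into a count by splitting $\Omega$ into two pieces:
\begin{itemize}
\item an inner box $W$, where $|f_j|\le e^{\lambda_j a^2/2}$;
\item $\Omega\setminus W$, where $f_j$ is exponentially small and the resonances are discarded using Theorem~\ref{ZCMM2026d:upper_bound_thm}.
\end{itemize}
This yields an explicit, if exponentially small, constant $C(U)$.

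You argue by contradiction, in Sj\"ostrand's original form. Under $\#\le\varepsilon(\hbar)\hbar^{-3}$, the whole resonance sum over $\Omega\subset U$ is at most $\varepsilon(\hbar)\hbar^{-3}\sup_{\overline\Omega}|f_\lambda|$, which is $o(\hbar^{-3})$ for each fixed $\lambda$. So, contrary to what you write, your route needs neither Theorem~\ref{ZCMM2026d:upper_bound_thm} nor an inner box nor any control of $f_\lambda$ inside $\Omega$. It needs only $f_\lambda=O(e^{-c\lambda})$ on $\{z:\ d(z,\partial\Omega)<2\epsilon_\star,\ \im z\le0\}$, uniformly for $(y,\eta)$ near the base point. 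This contradicts the definition of $\wfa$ directly, with no sequences to extract. You give up only the explicit constant, which by Remark~\ref{ZCMM2026d:remark_constant} carries no information anyway.

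The step you flag as the main obstacle is routine. With your packet,
$|f_\lambda(x+iy')|=\exp\bigl(\lambda[-\eta y'+(y'^2-(x-y)^2)/2]\bigr)$, and $\langle\omega,\chi f_\lambda\rangle$ is a multiple of $T_\lambda(\chi\omega)(y,-\eta)$. So take $\eta\in(-2,-\frac12)$; you are then testing the point $(E_0,+1)$. Take $\Omega=(E_0-2b,E_0+2b)+i(-2a,a)$ with $a$ small and $a\ll b$, $\epsilon_\star\le a/2$, and $|y-E_0|<b/2$. Then:
\begin{itemize}
\item near the bottom edge, $-\eta y'\le -a/2$ dominates $y'^2/2=O(a^2)$;
\item near the vertical edges and the real end points with $\im z\le0$, one has $-\eta y'\le0$, and $(x-y)^2/2\ge b^2/2$ dominates $y'^2/2$;
\item the top edge, where the packet is exponentially large, is excluded by the restriction $\im z\le 0$ in the error term.
\end{itemize}

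For $-E_0$, evenness of $\omega$ only gives you the analytic singularity there. Converting it into resonances would require a mirrored local trace formula with the error taken over $\im z\ge0$, which is not among the stated tools. Instead use Lemma~\ref{ZCMM2026d:susy_lemma}: $\Res(\Dl_j)=-\Res(\Dl_j)$ with multiplicities, so the count in a neighbourhood $U$ of $-E_0$ equals the count in $-U$.
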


Given $v_2$ it is possible to construct $v_1$ so that Assumption $(\Ab _\d )$ holds but
$\Dl _1$ has no resonances near $E_0$. We may then invoke the previous result to obtain
the following result for a single Dirac operator.

\begin{theorem}
Let $v_2\in C^\infty(\R^3;\R)$ be bounded and suppose that, for some $\ve\in(0,1/2)$, 
$R_0\ge0$ and $\r>0$, it extends holomorphically to 
$C_{\ve,R_0}$ with $|v_2(z)|=\Oc(\kum{z}^{-\r})$. 
Assume, moreover, that $\inf_{x\in\R^3}(1+v_2(x))>0$ 
and put $E_0:=1+\sup_x v_2(x)>1$. Then, for any complex neighbourhood $U$ of $E_0$ there exists a constant $C=C(U)$ such that
$$
\# \big ( \Res (\Dl _2) \cap U \big ) \ge C\hbar ^{-3},
$$
provided $\hbar $ is small enough, and likewise near $-E_0$.
\label{ZCMM2026d:thm_abs}
\end{theorem}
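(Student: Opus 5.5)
We reduce Theorem~\ref{ZCMM2026d:thm_abs} to Theorem~\ref{ZCMM2026d:thm_rel} by constructing a comparison potential $v_1$. It must satisfy Assumption~$(\Ab_\d)$ jointly with $v_2$, and it must be chosen so that $\Dl_1$ has no resonances in a fixed complex neighbourhood of $E_0$.

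The natural candidate is $v_1$ with $\sup m_1<E_0$ and $v_1-v_2$ decaying faster than $\kum{z}^{-\d}$ in the sector, $\d>3$. Concretely, take
\[
v_1 = v_2\,(1-\h_0) + w ,
\]
where $\h_0$ is an entire cut-off. We aim for $\h_0$ close to $1$ on a large ball and rapidly decaying in the sector, for instance $\h_0(z)=e^{-(z\cdot z)/R^2}$. The hypothesis $\ve<1/2$ is used here: it makes $\re(z\cdot z)\ge(1-\ve^2)|\re z|^2-\ldots>0$ in $C_{\ve,R_0}$, so $\h_0$ is Gaussian-small there. The correction $w$ is small and analytic.

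The verification of Assumption~$(\Ab_\d)$ then proceeds as follows.
\begin{itemize}
\item The difference $v_2-v_1=v_2\h_0-w$ decays faster than any power in the sector, provided $w$ decays like $\kum{z}^{-\d}$.
\item Non-degeneracy $\inf m_1>0$ holds because $m_1$ is a convex-type interpolation between $m_2$ and $1$ plus a small term.
\item The bound $\sup m_1<E_0$ is arranged by choosing $R$ and $w$ so that $v_1\le \sup v_2-\eta$ for some $\eta>0$. This needs care near points where $v_2$ approaches its supremum. If the supremum is attained in a compact region, replace $v_2(1-\h_0)$ by something like $v_2\h_1$, with $\h_1$ analytic, $\h_1\approx 1$ at infinity and $\h_1\le 1-\eta'$ on the region where $v_2$ is near its sup. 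If the supremum is approached only at infinity, it equals $0$ by decay, contradicting $E_0>1$. Hence $\sup v_2>0$ is attained on a compact set, and an entire factor $\h_1=1-c\,e^{-(z\cdot z)/R^2}$ works, with $v_1-v_2=-c\,v_2e^{-z\cdot z/R^2}$ superpolynomially decaying.
\end{itemize}

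Next we check that $E_0$ is a boundary point of $\supp\m$. By \eqref{ZCMM2026d:mu_restricted}, $\m|_{(1,\infty)}=\m_{+,2}-\m_{+,1}$. For $E\in(\sup m_1,E_0)$, the measure $\m_{+,1}$ vanishes near $E$ while $\m_{+,2}$ does not vanish on any neighbourhood of $E_0$, since $\n_{+,2}$ is positive below $E_0$ and zero above. For $E>E_0$ both measures vanish. So $E_0=\max\supp\m$, a boundary point with $E_0>1$.

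The main obstacle is showing that $\Dl_1$ has no resonances in some complex neighbourhood $U_0$ of $E_0$; we expect this to be the hardest step. Since $E_0>\sup m_1=\max_x\min_\x p_{+,1}$, every trajectory of $p_{+,1}$ at energies near $E_0$ has $|\x|^2=E^2-m_1^2\ge c>0$. We would show non-trapping via the escape function $x\cdot\x$: compute $\{p_{+,1},x\cdot\x\}=|\x|^2/\t_1-m_1\,x\cdot\nabla m_1/\t_1$. Making $\h_1$ slowly varying (large $R$) and using Lemma~\ref{ZCMM2026d:cauchy_estimates}, we aim to make $|x\cdot\nabla m_1|$ small, so the bracket is positive on the energy shell, possibly after modifying $v_1$ to be small by an analytic dilation-type flattening. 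Then the resonance-free result \cite[Theorem~5.11]{chen_melgaard26} gives no resonances in $[E_0-\zeta,E_0+\zeta]-i[0,C\hbar\log\hbar^{-1}]$. This is not a full neighbourhood, so we expect to need a stronger statement: either an $\hbar$-independent resonance-free neighbourhood for non-trapping analytic perturbations (an analytic-distortion argument giving a resonance-free region of size $O(1)$ below the axis), or making $v_1$ globally small so that $\Dl_{1,\th}$ is a small perturbation of $\Dl_{0,\th}$. In the latter case Lemma~\ref{ZCMM2026d:resolvent_lemma} with $\G_\th$ away from $E_0$ gives invertibility of $\Dl_{1,\th}-z$ near $E_0$ by a Neumann series, provided $\|\Pot_1\circ\f_\th\|$ is below the resolvent bound. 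We would try this Neumann series route first; it forces replacing the naive construction by one in which the compactly supported part of $v_2$ is removed, so that $v_1$ is small.

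With such a $v_1$, Theorem~\ref{ZCMM2026d:thm_rel} applied on $U\cap U_0$ yields $\#(\Res(\Dl_2)\cap U)\ge C\hbar^{-3}$. The case $-E_0$ follows from Lemma~\ref{ZCMM2026d:susy_lemma}.
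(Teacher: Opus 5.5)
Your reduction to Theorem~\ref{ZCMM2026d:thm_rel}, your check that $E_0=\max\supp(\m)$ is a boundary point, and your passage to $-E_0$ through Lemma~\ref{ZCMM2026d:susy_lemma} are as in the paper. You also end up at the paper's mechanism, a Neumann series around $\Dl_{0,\th}$. The gap is that the comparison potential, which carries the whole difficulty, is never constructed, and the candidates you write down cannot feed that Neumann series.

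Both $v_2(1-\h_0)+w$ and $v_2\h_1$ inherit from $v_2$ analyticity only in the exterior sector $C_{\ve,R_0}$. Near the origin they are merely smooth, so for $\Dl_1$ only a distortion with $\varkappa=0$ on $B(0,R_0)$ is admissible. Lemma~\ref{ZCMM2026d:resolvent_lemma}, which you invoke, needs $R_0=0$ and the pure dilation $\f_\th(x)=(1+\th)x$. Only then is $\Dl_{0,\th}$ a Fourier multiplier whose symbol has eigenvalues on $\G_\th$, and only then is $\|(\Dl_{0,\th}-z)^{-1}\|$ bounded on $U'$ uniformly in $\hbar$. For a distortion that is trivial on a ball, the symbol there is the undistorted one, and its eigenvalue $\kum{\x}$ takes the value $E_0$. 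WKB states $e^{ix\cdot\x_1/\hbar}\f(x)\mathbf{e}$ supported where $\varkappa=0$ then give $\|(\Dl_{0,\th}-z)^{-1}\|\gtrsim\hbar^{-1}$ for real $z$ near $E_0$. So no $\hbar$-independent smallness of $v_1$ closes the Neumann series for all small $\hbar$. ``Removing the compactly supported part'' of $v_2$ does not help by itself: a compactly supported cut-off destroys analyticity altogether, and an entire cut-off leaves the non-analytic core of $v_2$ in place.

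The missing idea is to produce an \emph{entire} $v_1$ that is uniformly small on the whole cone $C_{\ve',0}$, hence on $(1+\th_0)\R^3$ including near $0$, while keeping $v_2-v_1=\Oc(\kum{z}^{-\d})$, $\d>3$, in an exterior sector. The paper convolves the tail $v_{2,R}=(1-\h(\cdot/R))v_2$ with $Q_\l$, where $Q(z)=c\,(5-z\cdot z)e^{-z\cdot z/2}$ is entire, has unit mass and has vanishing moments of orders one to three.
\begin{itemize}
\item \emph{Smallness on the bounded part of the cone} comes from $|v_1(x+ib)|\le C(1+|b|^2/\l^2)e^{|b|^2/(2\l^2)}\ve_0$. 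The width is calibrated as $\l=\ve'R'/\L_0$ with $\L_0=(\log 1/\ve_0)^{1/2}$, which caps the Gaussian growth at $\ve_0^{-1/2}$.
\item \emph{The decay} $v_2-v_1=\Oc(\kum{z}^{-\r-4})$ in the sector comes from deforming the $y$-contour into the analyticity cone of $v_2$, so that $z-y$ is real near $\re z$. A fourth-order Taylor expansion follows, whose lower-order terms are annihilated by the moment conditions.
\end{itemize}
Only with this $v_1$ do pure dilation, Lemma~\ref{ZCMM2026d:resolvent_lemma} and the Neumann series give $\Res(\Dl_1)\cap U'=\emptyset$.
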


In physical units the two energies read $\pm (mc^2 + \sup V)$, and the hypothesis
$E_0>1$ reads $\sup V >0$: only a potential which \emph{increases} the local mass produces
resonances by this mechanism. If $\sup v_2 \le 0$, then $\max \supp (\m ) \le 1$ and the
above theorems do not apply; the corresponding analytic singularity of $\o $ then lies in
the spectral gap $(-1,1)$, where Assumption $(\Ab _\O ^\pm )$ fails and where the relevant
statement concerns the \emph{eigenvalues} of $\Dl _2$ rather than its resonances. Indeed,
for $0<E<1$ the classical set $\{ p_+ \le E\} = \{ m(x)^2 + |\x |^2 \le E^2 \}$ is
compact.

\begin{example}
Let $v_2(x) = c_{\ast}  \kum{x}^{-4}$ with $c_{\ast}   >0$, so that $m_2 = 1 + c_{\ast}   \kum{x}^{-4}$ and $E_0 = 1+c_{\ast}   $. Note that $v_2$ satisfies the hypotheses of Theorem~\ref{ZCMM2026d:thm_abs}. Indeed, $(1+z\cdot z)^{-2}$ is 
holomorphic and $\Oc (\kum{z}^{-4})$ on $C_{\ve ,R_0}$ because, there,  
$\re (1+z\cdot z)\geq 1+(1-\ve ^2)|\re z|^2>0$.  For $E>1$ the condition 
$m_2(x)\geq E$ reads $|x|^2 \leq ( c_{\ast}   /(E-1))^{1/2}-1$, whence
$$
\n _{+,2}(E) = \frac{4\pi }{3}\Big [ \Big ( \frac{c_{\ast}   }{E-1}\Big )^{1/2} - 1 \Big ]_+^{3/2},
$$
so that $\supp \n _{+,2} = [1, E_0]$. Writing $E = E_0 - \eta $ we get 
$(c_{\ast}   /(E-1))^{1/2} = (1-\eta / c_{\ast}   )^{-1/2} = 1 + \eta /(2 c_{\ast}   ) + \Oc (\eta ^2)$, hence
$$
\n _{+,2}(E) = \frac{4\pi }{3}\Big ( \frac{(E_0-E)_+}{2 c_{\ast}   }\Big )^{3/2}
\big ( 1 + \Oc (E_0-E)\big ) ,
$$
so that $\m _{+,2} = -\, d\n _{+,2}/dE$ is absolutely continuous near $E_0$, with density
$$
\m _{+,2}(E) = \frac{\pi }{ c_{\ast}   }\Big ( \frac{(E_0-E)_+}{2 c_{\ast}   }\Big )^{1/2}
\big ( 1 + \Oc (E_0-E)\big ) .
$$
Thus $\m $, which coincides with $\m _{+,2}$ near $E_0$ once the comparison potential of
Appendix~\ref{ZCMM2026d:appendix_comparison} has been chosen, has a square-root singularity
at the end point $E_0$ and is in particular not real analytic there. Of course no such
computation is needed in general: Lemma~\ref{ZCMM2026d:endpoint_lemma} produces the analytic
singularity from the support property alone.
\end{example}

\begin{remark}
The two families of resonances produced by Theorem~\ref{ZCMM2026d:thm_abs} are mirror images
of one another, by Lemma~\ref{ZCMM2026d:susy_lemma}; the family near $E_0$ lies below the real
axis and the one near $-E_0$ above it, each on the sheet uncovered by the corresponding
branch of $\G _{\th _0}$. This is in contrast with the electric case treated in
\cite{jkmm14}, where the two families sit near the unrelated points $\sup v +1$ and
$\inf v -1$.
\end{remark}

\begin{remark}
The constant $C(U)$ produced by the proof is exponentially small in the FBI parameter
$\l _j$ fixed in Step 6 below, and it degenerates as $U$ shrinks. Thus the \emph{power}
$\hbar ^{-3}$ is optimal, since it matches the upper bound \eqref{ZCMM2026d:upper_bound}, but
the constant carries no quantitative information; in particular the argument does not
produce a Weyl-type lower bound with an identifiable constant.
\label{ZCMM2026d:remark_constant}
\end{remark}

\subsection{Proofs of main results}
The proof of the existence of resonances relies on the fact that we can find points in
$\wfa (\o )$. Since the point $(E_0,\pm 1)$ in Theorem~\ref{ZCMM2026d:thm_rel} belongs to
$\wfa (\m )$ by Lemma~\ref{ZCMM2026d:endpoint_lemma}, we begin by relating $\o $ to $\m $. As
explained in the introduction, the relation is not a convolution; it becomes one after the
substitution $u=E^2$, and the resulting convolution operator is elliptic classical
analytic, which enables us to apply Proposition~\ref{ZCMM2026d:elliptic_wavefront}. Finally,
to prove Theorem~\ref{ZCMM2026d:thm_abs} we construct, 
given $v_2$, a ``small'' potential $v_1$ such that Assumption $(\Ab _\d )$ is fulfilled and $\Dl _1$ has no resonances near
$E_0$; this is carried out in Appendix~\ref{ZCMM2026d:appendix_comparison}.

\begin{lemma}
For every $\f \in \ccs ((0,\infty ))$ we have
\begin{equation}
\langle \o , \f \rangle = \langle \m , \F _\f \rangle , \qquad
\F _\f (t) = 4\pi \int \limits _{\R } \f (s) \, s \, (s^2-t^2)_+^{1/2}\, ds ,
\label{ZCMM2026d:kernel_identity}
\end{equation}
and $\F _\f \in C^1(\R )$. In other words $\o = K \m $ on $(0,\infty )$, where $K $
is the integral operator with kernel $K (E,t) = 4\pi E (E^2-t^2)_+^{1/2}$.
\label{ZCMM2026d:kernel_lemma}
\end{lemma}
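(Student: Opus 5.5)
The plan is a direct computation: integrate out $\x$ in polar coordinates, then rewrite the result as a pairing with $\m$. Fix $\f\in\ccs((0,\infty))$. Since $\supp\f\subset(0,\infty)$ and $p_{-,j}\le -c_0<0$, the two terms $\f(p_{-,j})$ in \eqref{ZCMM2026d:omega_def} vanish identically. Hence
\[
\langle\o,\f\rangle=\int_{\R^6}\bigl(\f(\t_2(x,\x))-\f(\t_1(x,\x))\bigr)\,dx\,d\x .
\]
The integrand is supported in $|\x|\le R$ (with $\supp\f\subset[-R,R]$), and the mean-value bound $|\t_2-\t_1|\le|v_2-v_1|\in L^1$ makes it absolutely integrable on $\R^6$. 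Fubini therefore lets us integrate first in $\x$ for each fixed $x$.

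For fixed $x$ and a mass value $t=m_j(x)>0$, pass to polar coordinates $r=|\x|$ and then substitute $s=(t^2+r^2)^{1/2}$, so that $r\,dr=s\,ds$ and $r=(s^2-t^2)^{1/2}$. This gives
\[
\int_{\R^3}\f\bigl((t^2+|\x|^2)^{1/2}\bigr)\,d\x=4\pi\int_t^\infty\f(s)\,s\,(s^2-t^2)^{1/2}\,ds=\F_\f(t).
\]
The term-by-term splitting is legitimate here, because for each fixed $x$ both $\x$-integrals are over a compact ball. It follows that
\[
\langle\o,\f\rangle=\int_{\R^3}\bigl(\F_\f(m_2(x))-\F_\f(m_1(x))\bigr)\,dx .
\]

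It remains to identify this integral with $\langle\m,\F_\f\rangle$. Here $\F_\f$ is not compactly supported: it is constant for $t\le\inf\supp\f$ and vanishes for $t\ge R$. Take $\h\in\ccs((0,\infty))$ with $\h=1$ on the interval $[\min_j\inf m_j,\max_j\sup m_j]\subset[c_0,\infty)$, which contains every value of $m_1$ and $m_2$. Then $\F_\f(m_j(x))=(\h\F_\f)(m_j(x))$ for all $x$, so the right side equals $\langle\m,\h\F_\f\rangle$. Since $\supp\m$ lies in that interval, this is the meaning of $\langle\m,\F_\f\rangle$. The extension of $\m$ to $C^1$ test functions needed for this pairing is justified by the order-one bound $|\langle\m,\psi\rangle|\le\|\psi'\|_\infty\|v_2-v_1\|_{L^1}$, together with density.

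The regularity $\F_\f\in C^1(\R)$ is the only slightly delicate point, and even it is routine. For $t>0$ one can differentiate under the integral, obtaining
\[
\F_\f'(t)=-4\pi t\int\f(s)\,s\,(s^2-t^2)_+^{-1/2}\,ds .
\]
This is continuous: after the substitution $s^2=t^2+w$ it becomes $-2\pi t\int_0^\infty\f\bigl((t^2+w)^{1/2}\bigr)w^{-1/2}\,dw$, with an integrable singularity and a parameter dependence that is continuous by dominated convergence. For $t\le 0$, note that $\F_\f$ is even in $t$ and constant for $|t|<\inf\supp\f$, so no problem arises there.

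Finally, reading off the kernel in $\langle\m,\F_\f\rangle=\int\f(E)\,\bigl(\int 4\pi E\,(E^2-t^2)_+^{1/2}\,d\m(t)\bigr)\,dE$ gives $\o=K\m$ on $(0,\infty)$ with $K(E,t)=4\pi E(E^2-t^2)_+^{1/2}$. The interchange of the $E$- and $t$-pairings is justified because $K$ is continuous and $\m$ is compactly supported of order at most one.
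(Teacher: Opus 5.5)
Your core argument is the paper's proof. You discard the $p_{-,j}$ terms because $p_{-,j}\le -c_0$, integrate out $\x$ in polar coordinates with $s=(t^2+r^2)^{1/2}$, get $C^1$ regularity by differentiating under the integral sign, and pair with the order-one distribution $\m$. Your Fubini justification, via $|\f(\t_2)-\f(\t_1)|\le\|\f'\|_\infty|v_2-v_1|\,\1_{\{|\x|\le R\}}\in L^1(\R^6)$, is if anything more to the point than the paper's.

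There is, however, a false claim, and you use it for the part $t\le 0$ of $\F_\f\in C^1(\R)$. $\F_\f$ is \emph{not} constant for $|t|<\inf\supp\f$. Your own formula gives $\F_\f'(t)=-2\pi t\int_0^\infty\f((t^2+w)^{1/2})w^{-1/2}\,dw\neq 0$ for $0<|t|<\inf\supp\f$ whenever $\f\ge 0$, $\f\not\equiv 0$.

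$\F_\f$ is also compactly supported, contrary to what you say. It is even and vanishes for $|t|\ge\sup\supp\f$. Constancy on $(-\infty,\inf\supp\f]$ would therefore force $\F_\f(0)=0$, whereas $\F_\f(0)=\int_{\R^3}\f(|\x|)\,d\x>0$.

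The repair is immediate. Your derivative formula above is valid and continuous for every $t\in\R$, and it vanishes at $t=0$. More simply, since $\f$ vanishes near $0$, the map $(t,\x)\mapsto\f((t^2+|\x|^2)^{1/2})$ is smooth on $\R^4$ and supported in $\{t^2+|\x|^2\le(\sup\supp\f)^2\}$. Hence $\F_\f\in\ccs(\R)$ directly, which also makes your cut-off $\h$ and the $C^1$-extension of $\m$ unnecessary.

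The justification in your last paragraph does not work as stated either. Continuity of $K$ is not enough to pair $K(E,\cdot)$ with a distribution of order one, because $\partial_t K(E,t)=-4\pi E t(E^2-t^2)_+^{-1/2}$ is unbounded as $t\to E^-$. No interchange is needed, though. The clause $\o=K\m$ is meant by duality, $\langle K\m,\f\rangle:=\langle\m,\int K(E,\cdot)\f(E)\,dE\rangle$. Since $\int K(E,t)\f(E)\,dE=\F_\f(t)$, it is just a restatement of the identity you proved.
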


\begin{proof}
Since $p_{-,j} = -\t _j \le -c_0 <0$, the two terms of \eqref{ZCMM2026d:omega_def} containing
$p_{-,j}$ vanish for $\f \in \ccs ((0,\infty ))$, whence
$$
\langle \o ,\f \rangle = \int \limits _{\R ^3}
\big [ \F _\f (m_2(x)) - \F _\f (m_1(x)) \big ] \, dx ,
\qquad \F _\f (t) = \int \limits _{\R ^3}\f \big ( (t^2+|\x |^2)^{1/2}\big )\, d\x .
$$
Passing to polar coordinates in the $\x $-variable and substituting
$s = (t^2+r^2)^{1/2}$, so that $r^2 \, dr = s(s^2-t^2)^{1/2}\, ds$, we obtain
$$
\F _\f (t) = 4\pi \int \limits _0^\infty \f \big ( (t^2+r^2)^{1/2}\big ) r^2 \, dr
= 4\pi \int \limits _{|t|}^\infty \f (s)\, s\, (s^2-t^2)^{1/2}\, ds ,
$$
which is \eqref{ZCMM2026d:kernel_identity}. Differentiating under the integral sign,
$$
\F _\f '(t) = -4\pi t \int \f (s) \, s\, (s^2-t^2)_+^{-1/2}\, ds ,
$$
an absolutely convergent integral depending continuously on $t$; hence
$\F _\f \in C^1(\R )$ and the pairing with the distribution $\m $, which is of order
$\le 1$, is well defined. The interchange of the order of integration is justified by
$|\F _\f (m_2) - \F _\f (m_1)| \le \| \F _\f ' \| _\infty |v_2 - v_1| \in L^1(\R ^3)$.
\end{proof}

The kernel $K $ depends on $(E,t)$ through $E^2-t^2$ and not through $E-t$; its
singular support is the pair of diagonals $\{ E = \pm t\}$. The next lemma removes this
difficulty.

\begin{lemma}
Let $q:(0,\infty ) \to (0,\infty )$, $q(t)=t^2$, and let $q_*$ denote the push forward as
in \eqref{ZCMM2026d:wf_invariance}. Then
\begin{equation}
q_* \o = 2\pi \big ( u_+^{1/2}\big ) \ast \, q_*\m \qquad \text{on }(0,\infty ),
\label{ZCMM2026d:structure}
\end{equation}
and the convolution operator $A := 2\pi \, u_+^{1/2}\ast \, \cdot $ satisfies
$\wfa (Au) = \wfa (u)$ for every $u\in \edist ((0,\infty ))$, the space of compactly supported distributions on $(0,\infty)$. 
Consequently
\begin{equation}
\wfa (\o ) \cap \big ( (0,\infty )\times (\R \setminus \{ 0\})\big ) =
\wfa (\m ) \cap \big ( (0,\infty )\times (\R \setminus \{ 0\})\big ) .
\label{ZCMM2026d:wf_transfer}
\end{equation}
\label{ZCMM2026d:structure_lemma}
\end{lemma}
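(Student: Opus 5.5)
The plan has three steps: compute the structure identity directly, prove ellipticity of $A$ by splitting its multiplier into high and low frequencies, and transfer wavefront sets back through $q$.

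\textbf{Step 1: the identity \eqref{ZCMM2026d:structure}.} Take $\chi\in\ccs((0,\infty))$ and put $\f:=\chi\circ q$, so $\f\in\ccs((0,\infty))$ and $\langle q_*\o,\chi\rangle=\langle\o,\f\rangle=\langle\m,\F_\f\rangle$ by Lemma~\ref{ZCMM2026d:kernel_lemma}. In the integral defining $\F_\f(t)$, for $t>0$, I substitute $u=s^2$, so $s\,ds=du/2$. This gives
\[
\F_\f(t)=2\pi\int\chi(u)\,(u-t^2)_+^{1/2}\,du=G(t^2),\qquad G(w):=2\pi\int\chi(u)(u-w)_+^{1/2}\,du .
\]
Since $\supp\m\subset(0,\infty)$, we get $\langle\m,G\circ q\rangle=\langle q_*\m,G\rangle$. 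Also $G=\check k*\chi$ with $k(u)=2\pi u_+^{1/2}$ and $\check k(u)=k(-u)$, so $\langle q_*\m,G\rangle=\langle k*q_*\m,\chi\rangle$. This is \eqref{ZCMM2026d:structure}. The pairing needs $G\in C^1$; this holds by the same argument as in Lemma~\ref{ZCMM2026d:kernel_lemma}. Although $G$ is not compactly supported, it vanishes for large $w$, and $q_*\m$ is compactly supported, so everything makes sense.

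\textbf{Step 2: $\wfa(Au)=\wfa(u)$ for $u\in\edist$.} The Fourier transform of $u_+^{1/2}$ is $\G(3/2)(i\x+0)^{-3/2}$. I choose a cutoff $\psi\in\ccs(\R)$ equal to $1$ near $0$ and split $A=A_{\rm high}+A_{\rm low}$, where $A_{\rm high}$ and $A_{\rm low}$ have multipliers $(1-\psi)\hat k$ and $\psi\hat k$.

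\emph{Low part.} The multiplier $\psi\hat k$ is compactly supported and locally integrable, since $|\x|^{-3/2}$ is integrable near $0$ after the $+0$ regularisation as a distribution. I would rather use a Gaussian-type cutoff $\psi$, or simply argue that $\psi\hat k$ is a compactly supported distribution. Then its inverse Fourier transform is entire by Paley--Wiener. Convolving an entire function with the compactly supported $u$ gives a real-analytic function, so $\wfa(A_{\rm low}u)=\emptyset$.

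\emph{High part.} On $|\x|\ge B$ the symbol $2\pi\G(3/2)(i\x)^{-3/2}$ is homogeneous of degree $-3/2$, nonvanishing, and analytic. It therefore satisfies the estimates of Definition~\ref{ZCMM2026d::symbolclass} with $m=-3/2$ and is elliptic everywhere. The main obstacle is the caveat after Proposition~\ref{ZCMM2026d:elliptic_wavefront}: a $C^\infty$ cutoff $1-\psi$ does not produce an analytic pseudodifferential operator. I would handle this by realising the high-frequency part as an analytic pseudodifferential operator in the standard sense, with symbol equal to the homogeneous one for $|\x|\ge B$. The difference between this operator and $A_{\rm high}$ is again given by a compactly supported multiplier modification, so it is analytic-regularising exactly as in the low part. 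Proposition~\ref{ZCMM2026d:elliptic_wavefront}, through \eqref{ZCMM2026d:analytic_global_elliptic}, then gives equality of the wavefront sets.

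\textbf{Step 3: transfer \eqref{ZCMM2026d:wf_transfer}.} The convolution $k*q_*\m$ is only asserted on $(0,\infty)$, and near $0$ the kernel is not a compactly supported distribution on $(0,\infty)$. So I localize: for $E_1>0$, cut $q_*\m$ as $\chi q_*\m+(1-\chi)q_*\m$ with $\chi=1$ near $E_1^2$. The far piece contributes an analytic function near $E_1^2$, because $k$ is analytic away from $0$. This gives $\wfa(q_*\o)=\wfa(q_*\m)$ over $(0,\infty)$. Finally, $q$ is a real-analytic diffeomorphism of $(0,\infty)$ with $q'\neq0$. By \eqref{ZCMM2026d:wf_invariance}, $(t,\y)$ lies in $\wfa(\o)$ if and only if $(t^2,\y/(2t))$ lies in $\wfa(q_*\o)$, and likewise for $\m$. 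The two characterisations coincide, which yields \eqref{ZCMM2026d:wf_transfer}.
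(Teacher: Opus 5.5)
Your proposal is correct and follows essentially the same route as the paper. The common steps are: the substitution $u=s^2$, which gives $\F_\f=G\circ q$ with $G=\check k\ast\chi$; the high/low-frequency split of the multiplier $2\pi\G(\tfrac32)(i\x+0)^{-3/2}$, with a compactly supported multiplier correction to obtain a genuine analytic pseudodifferential operator; and transport of $\wfa$ through $q$ via \eqref{ZCMM2026d:wf_invariance}. Two small remarks: $(i\x+0)^{-3/2}$ is not locally integrable at $0$, so your fallback (treating $\psi\hat k$ as a compactly supported distribution and invoking Paley--Wiener) is the right one, and the localisation in your Step~3 is harmless but unnecessary, since $q_*\m$ already lies in $\edist((0,\infty))$ and Step~2 applies to it directly.
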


\begin{proof}
\emph{The identity \eqref{ZCMM2026d:structure}.} Note first that, by Assumption
$(\Ab _\d )$, $\supp \m $ is a compact subset of $(0,\infty )$, so that
$q_*\m \in \edist ((0,\infty ))$ and the push forward is well defined. Let
$\f \in \ccs ((0,\infty ))$ and put $\p := \f \circ \sqrt{\cdot } \in \ccs ((0,\infty ))$.
Substituting $u=s^2$ in \eqref{ZCMM2026d:kernel_identity},
$$
\F _\f (t) = 4\pi \int \f (s) s(s^2-t^2)_+^{1/2}\, ds
= 2\pi \int \p (u) (u-t^2)_+^{1/2}\, du = \Xi _\p (t^2),
$$
where
$$
\Xi _\p (\s ) := 2\pi \int \limits _0^\infty \p (\s + w) w^{1/2}\, dw
$$
is smooth, with $\supp \Xi _\p \subset (-\infty , \max \supp \p ]$. Hence, by
Lemma~\ref{ZCMM2026d:kernel_lemma} and the definition of $q_*$,
\begin{align*}
\langle \o , \f \rangle &= \langle \m , \Xi _\p \circ q \rangle
= \langle q_*\m , \Xi _\p \rangle \\
&= \Big \langle q_*\m (\s ) , 2\pi \int \p (u)(u-\s )_+^{1/2}\, du \Big \rangle
= \big \langle 2\pi u_+^{1/2}\ast q_*\m , \, \p \big \rangle .
\end{align*}
Since $\f = \p \circ q$, the left-hand side equals $\langle q_*\o , \p \rangle $, which
proves \eqref{ZCMM2026d:structure}.

\emph{The operator $A$.} With the one-dimensional unitary Fourier transform
\[
(\Fr f)(\x )=(2\pi)^{-1/2}\int_{\R} e^{-it\x }f(t)\,dt,
\]
the convolution theorem gives $A=\Fr^{-1}a_A\Fr$, where, as homogeneous distributions,
\[
a_A(\x ):=2\pi\sqrt{2\pi}\,\Fr[u_+^{1/2}](\x )
=2\pi\G(\tfrac32)(i\x+0)^{-3/2}, 
\]
which is positively homogeneous of degree $-3/2$, real analytic and non-vanishing on
$\R \setminus \{ 0\}$. Fix $B>0$ and $\h _B \in \ccs (\R )$ equal to 1 on $[-B,B]$, and
split $a_A = \h _B a_A + (1-\h _B)a_A$. On $|\x | \ge B$ we have
$a_A(\x ) = c_\pm |\x |^{-3/2}$ with constants $c_\pm \ne 0$ on $\pm \x >0$, so that
\[
|\partial_{\x}^{N} a_A(\x )| = \Big ( \prod _{k=0}^{N-1}\big ( \tfrac32 + k\big )\Big )
|c_\pm | \, |\x |^{-3/2-N} \leq C^{N+1}N! \, |\x |^{-3/2-N},
\]
whence $(1-\h _B)a_A \in \Ssf_{\rm a}^{-3/2}(\R )$, and $|a_A(\x )| = 2\pi\G (\frac32)|\x |^{-3/2}$ shows that this high-frequency symbol is elliptic
on both components of $T^*\R \setminus 0$. Thus the corresponding high-frequency
operator is an elliptic analytic pseudodifferential operator of order $-3/2$ in the sense
of Proposition~\ref{ZCMM2026d:elliptic_wavefront}; changing its multiplier on a bounded
$\x $-set, if desired in order to place it in a standard global quantisation, changes the
operator only by a low-frequency term of the type considered next. On the other hand
$\h _B a_A$ is a compactly supported distribution, so its inverse Fourier transform is an
entire function of exponential type, and $\opc {\h _B a_A}u$  is real analytic for
$u\in \edist $. Since adding an analytic-regularising term does not affect the analytic
wave front set, Proposition~\ref{ZCMM2026d:elliptic_wavefront} gives
$\wfa (Au)=\wfa (u)$.

\emph{Conclusion.} Apply the preceding to $u = q_*\m $ and use \eqref{ZCMM2026d:wf_invariance}
on both sides of \eqref{ZCMM2026d:structure}; since $q$ is a real-analytic diffeomorphism of
$(0,\infty )$ and the covector transformation $\y \mapsto \y /2t$ is a positive scaling,
\eqref{ZCMM2026d:wf_transfer} follows.
\end{proof}

The proof of Lemma~\ref{ZCMM2026d:structure_lemma} shows that $\F _\f (t) = \Xi _\p (t^2)$ with $\Xi _\p $ smooth, 
so $\F _\f $ is in fact $C^\infty $, not merely $C^1$.

\begin{corollary}
Let $E_0 > 0$ be a boundary point of $\supp (\m )$. Then $(E_0,\pm 1)\in \wfa (\m )$ and
$(E_0,\pm 1)\in \wfa (\o )$.
\label{ZCMM2026d:wf_corollary}
\end{corollary}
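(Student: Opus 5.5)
The corollary follows by chaining Lemma~\ref{ZCMM2026d:endpoint_lemma} with the transfer identity \eqref{ZCMM2026d:wf_transfer} of Lemma~\ref{ZCMM2026d:structure_lemma}.

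\emph{Step 1: the singularity of $\m$.} I first check that $\m$ is real valued. For real $\f$ the pairing \eqref{ZCMM2026d:mu_def} is real, since $m_1,m_2$ are real. Next, $E_0$ is a boundary point of the closed set $\supp(\m)$, so $E_0\in\supp(\m)$ but $E_0$ is not an interior point of it. Lemma~\ref{ZCMM2026d:endpoint_lemma}, applied to $u=\m\in\ddist(\R)$ and $x_0=E_0$, then gives $(E_0,\pm1)\in\wfa(\m)$ at once.

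\emph{Step 2: transfer to $\o$.} Since $E_0>0$, the point $(E_0,\pm1)$ lies in $(0,\infty)\times(\R\setminus\{0\})$. Identity \eqref{ZCMM2026d:wf_transfer} therefore yields $(E_0,\pm1)\in\wfa(\o)$.

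\emph{Points to check.} Lemma~\ref{ZCMM2026d:structure_lemma} treats $\o$ only on the open half-line $(0,\infty)$, whereas $\o$ itself is not compactly supported. This causes no difficulty, because the analytic wave front set is local: the FBI transform \eqref{ZCMM2026d:FBI} involves a cutoff $\h$ supported near $E_0$. So $\wfa(\o)$ over a neighbourhood of $E_0$ is determined by the restriction $\o|_{(0,\infty)}$, which is what \eqref{ZCMM2026d:structure} describes.

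The one step requiring care is the conic structure. The push-forward rule \eqref{ZCMM2026d:wf_invariance} rescales covectors by the positive factor $1/2t$. The convolution operator $A$ preserves $\wfa$ exactly, with no sign change and no shift in the base point. Hence the two directions $\pm1$ at $E_0$ are preserved separately, not merely the point $E_0\in\ssa$. This is already contained in the proof of Lemma~\ref{ZCMM2026d:structure_lemma}, so I expect no real obstacle. In any case, $\o$ is real, being defined by \eqref{ZCMM2026d:omega_def} with real symbols, so its analytic wave front set is symmetric under $\x\mapsto-\x$, and one direction at $E_0$ suffices.
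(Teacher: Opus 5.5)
Your proposal is correct and is essentially the paper's own proof: the endpoint lemma applied to the real-valued distribution $\mu$ at the non-interior support point $E_0$ gives $(E_0,\pm 1)\in\wfa(\mu)$, and since $E_0>0$ the transfer identity of Lemma~\ref{ZCMM2026d:structure_lemma} carries this to $\wfa(\omega)$. Your extra remarks on the locality of the analytic wave front set and on the positive rescaling of covectors are accurate, and they only spell out what that lemma already contains.
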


\begin{proof}
The distribution $\m $ is real valued and $E_0$ belongs to $\supp (\m )$ without being an
interior point of it, so Lemma~\ref{ZCMM2026d:endpoint_lemma} gives
$(E_0,\pm 1)\in \wfa (\m )$. Since $E_0>0$, Lemma~\ref{ZCMM2026d:structure_lemma} applies.
\end{proof}

The corresponding statement on the negative half-line requires no separate argument,
because $\o $ is an even distribution.

\begin{corollary}
Let $\Rc \o $ denote the reflection of $\o $, defined by
$\langle \Rc \o , \f \rangle := \langle \o , \f (-\, \cdot \, )\rangle $.
Then $\Rc \o = \o$ in $\ddist(\R)$, that is,  
$\langle \o,\f(-\,\cdot\,)\rangle = \langle \o,\f\rangle$ for every 
$\f\in C^\infty_0(\R)$, and consequently
$$
(s _1E_0,s _2)\in \wfa (\o ) \mbox{ for all } s _1,s _2\in \{ \pm 1\}. 
$$
\label{ZCMM2026d:omega_even}
\end{corollary}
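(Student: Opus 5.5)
The plan is to verify evenness of $\o$ directly from its definition \eqref{ZCMM2026d:omega_def}, using $p_{-,j}=-p_{+,j}$ from Lemma~\ref{ZCMM2026d:symbol_eigenvalues}. Fix $\f\in\ccs(\R)$ and put $\tilde\f:=\f(-\,\cdot\,)$. Then
\[
\langle \o,\tilde\f\rangle=\int_{\R^6}\big(\f(-\t_2)-\f(-\t_1)+\f(\t_2)-\f(\t_1)\big)\,dx\,d\x ,
\]
because $\tilde\f(p_{\pm,j})=\f(\mp\t_j)$. This is the same integrand as in $\langle\o,\f\rangle$ with the $p_+$ and $p_-$ terms interchanged. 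The only point needing care is that the four terms are not separately integrable. I would therefore not split the integral. Instead I would note that the integrand is pointwise literally the same function of $(x,\x)$ for $\f$ and $\tilde\f$, once the two brackets $\f(\t_2)-\f(\t_1)$ and $\f(-\t_2)-\f(-\t_1)$ are grouped. Each bracket is bounded by $\|\f'\|_\infty|v_2-v_1|\,\1_{|\x|\le R}$, and this bound is integrable by the order-one estimate shown earlier in the section. Hence $\langle\o,\tilde\f\rangle=\langle\o,\f\rangle$, that is, $\Rc\o=\o$.

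For the wave front statement, I would first use Corollary~\ref{ZCMM2026d:wf_corollary}, which gives $(E_0,\pm1)\in\wfa(\o)$. Here $E_0$ is the boundary point of $\supp(\m)$ in the setting of Theorem~\ref{ZCMM2026d:thm_rel}, with $E_0>1>0$. Next I would apply \eqref{ZCMM2026d:wf_invariance} to the real-analytic diffeomorphism $r(t)=-t$ of $\R$. Since $\Rc\o=r_*\o$ and $r'=-1$, this gives
\[
\wfa(\o)=\wfa(\Rc\o)=\{(-t,-\y):(t,\y)\in\wfa(\o)\}.
\]
Applied to $(E_0,\pm1)$, this yields $(-E_0,\mp1)\in\wfa(\o)$, so all four sign combinations $(s_1E_0,s_2)$ occur. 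Alternatively, $\o$ is real valued, so the set is symmetric under $\y\mapsto-\y$; this recovers both signs at $-E_0$ as well.

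Since \eqref{ZCMM2026d:wf_invariance} is stated for $\dtemp$ while $\o$ is only known to lie in $\ddist(\R)$, I would localize first. Multiply $\o$ by an even cutoff $\h\in\ccs(\R)$ that equals $1$ near $\pm E_0$. The product $\h\o$ is even, compactly supported, and has the same analytic wave front set as $\o$ near $\pm E_0$. I would then apply the invariance to $\h\o$. I expect no real obstacle here. The only delicate point is keeping the $p_\pm$ contributions paired, so that no divergent integrals are separated.
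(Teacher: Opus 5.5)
Your proof is correct, and for the evenness of $\o$ it takes a different and more direct route than the paper.

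\emph{Your route.} Because $p_{-,j}=-p_{+,j}$, replacing $\f$ by $\f(-\,\cdot\,)$ in \eqref{ZCMM2026d:omega_def} only permutes the four terms of the integrand. The integrand is therefore pointwise the same function of $(x,\x)$. It is absolutely integrable, being dominated by $2\|\f'\|_\infty|v_2-v_1|\1_{\{|\x|\le R\}}$. This settles $\Rc\o=\o$ for every $\f\in\ccs(\R)$ at once.

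\emph{The paper's route.} It works one half-line at a time. For $\f\in\ccs((-\infty,0))$ only the $p_{-,j}$ terms survive. It rewrites $\langle\o,\f\rangle$ as $\langle\m,\F_{\check{\f}}\rangle$, using $\check{\F}_{\f}=\F_{\check{\f}}$. Lemma~\ref{ZCMM2026d:kernel_lemma} then identifies this with $\langle\o,\check{\f}\rangle$. Finally it needs the separate observation that $\o$ vanishes on $(-c_0,c_0)$ to glue the two half-lines into $\Rc\o=\o$ on all of $\R$.

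\emph{What each buys.} Your argument avoids the kernel lemma, the vanishing near the origin, and the gluing step. The paper's argument, in exchange, shows explicitly that the restriction of $\o$ to $(-\infty,0)$ is the reflection of $K\m$. This ties the negative-energy singularity directly back to the mass distribution $\m$.

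\emph{Wave front part.} Here the two proofs coincide: Corollary~\ref{ZCMM2026d:wf_corollary} combined with invariance of $\wfa$ under the reflection. You make this explicit through \eqref{ZCMM2026d:wf_invariance} with $q'=-1$, whereas the paper only cites the reflection as a real-analytic diffeomorphism. Your preliminary localisation by an even cut-off is harmless, since $\wfa$ is a local notion.
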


\begin{proof}
Let $\f \in \ccs ((-\infty ,0))$ and put $\check{\f }(s) := \f (-s) \in \ccs
((0,\infty ))$. Since $p_{\pm ,j}= \pm \t _j$, only the terms containing $p_{-,j}$ survive
in \eqref{ZCMM2026d:omega_def}, and
$$
\langle \o , \f \rangle = \int \limits _{\R ^3}\big [ \check{\F }_\f (m_2(x))
- \check{\F }_\f (m_1(x))\big ]\, dx ,
$$
$$
\check{\F }_\f (t) = \int \limits _{\R ^3}\f \big ( -(t^2+|\x |^2)^{1/2}\big )\, d\x .
$$
But $\check{\F }_\f = \F _{\check{\f }}$ with $\F $ as in
Lemma~\ref{ZCMM2026d:kernel_lemma}, and therefore
$$
\langle \o , \f \rangle = \langle \m , \F _{\check{\f }}\rangle
= \langle \o , \check{\f }\rangle
$$
by that lemma. Thus $\langle \Rc \o , \check{\f }\rangle
= \langle \o , \check{\f }\rangle $ for every $\check{\f } \in \ccs ((0,\infty ))$, and
the same identity on $(-\infty ,0)$ follows by exchanging the roles of the two
half-lines. Finally $\o $ vanishes near the origin, since $\f (\pm \t _j)=0$ whenever
$\supp \f \subset (-c_0,c_0)$; hence $\Rc \o = \o $ on all of $\R $. The assertion about
$\wfa (\o )$ now follows from Corollary~\ref{ZCMM2026d:wf_corollary}, the reflection
$E\mapsto -E$ being a real-analytic diffeomorphism.
\end{proof}

\begin{remark}
Corollary~\ref{ZCMM2026d:omega_even} makes the analytic singularity at $-E_0$ independent of
the spectral symmetry of Lemma~\ref{ZCMM2026d:susy_lemma}. The passage from that singularity to the resonance count near $-E_0$ may then be carried out either by repeating Steps 2--7
below with the boxes reflected in the origin 
using the mirror form of Theorem~\ref{ZCMM2026d:local_trace_formula}, in which the supremum in the error term is taken
over $\im z\ge 0$, since near $-E_0$ the resonances lie in the upper half plane. 
\label{ZCMM2026d:remark_negative}
\end{remark}

\begin{proof}[Proof of Theorem~\ref{ZCMM2026d:thm_rel}]
The parameters are fixed in an order which is not the order of the steps below, and we
record it here once. First a non-real distortion parameter $\th _0 = i\t \in D_\ve ^+$ with
$0<\t <\ve /(2\sqrt{1+\ve ^2})$: by distortion-independence this fixes the notion of
resonance, and with it the curve $\G _{\th _0}$ which the boxes have to avoid. Next, given
the complex neighbourhood $U$ of the boundary point $E_0>1$ of $\supp (\m )$, the box
parameters $b$ and then $a$ of Step~3, which depend on $U$, on $E_0$ and on
$\dist (E_0,\G _{\th _0})$ alone, and which determine $\O $, $W$ and the constant $C_0$ of
\eqref{ZCMM2026d:f_small}. Only then the cut-off $\h $, since
Theorem~\ref{ZCMM2026d:local_trace_formula} attaches it to $I=\O \cap \R $; and only then the FBI
data $\a _j,\b _j,\l _j,\kappa _j$, since these are read off from $\h \o $. Next $j$, taken
large in Step~6. Finally $\hbar $, which is allowed to tend to zero only once the wave packet
$f_j$ has been fixed, as Remark~\ref{ZCMM2026d:remark_gdep} requires. We nevertheless present the
wave packets first, in Step~2, because it is their size \eqref{ZCMM2026d:wave_packet_size} which
dictates the shape of the boxes; no step uses a quantity chosen at a later one.
\newline
\noindent
\emph{Step 1: the analytic singularity.}  By Corollary~\ref{ZCMM2026d:wf_corollary} we have $(E_0,\pm 1) \in \wfa (\o )$; we  concentrate on the covector $(E_0,+1)$.
\newline
\noindent
\emph{Step 2: FBI wave packets.} For $n=1$ the transform \eqref{ZCMM2026d:FBI} of a
distribution $u$ at a point $(\a ,\b ) \in \R \times \R $ reads
$$
(T_\l u)(\a ,\b ) = 2^{-1/2}\Big ( \frac{\l }{\pi }\Big )^{3/4}
\Big \langle u , \, \h \, e^{i\l \b (\a - \, \cdot \, ) - \l (\a - \, \cdot \, )^2/2}
\Big \rangle ,
$$
so that, up to the factor $2^{-1/2}(\l /\pi )^{3/4}$, testing $u$ against a Gaussian wave
packet \emph{is} taking its FBI transform. 
Fix a cut-off $\h \in \ccs ((1,\infty ))$ equal to one near $E_0$, which is possible since
$E_0>1$; it will be identified in Step 4 with the cut-off of
Theorem~\ref{ZCMM2026d:local_trace_formula}. By Step 1 the covector $(E_0,+1)$ belongs to
$\wfa (\o )$, that is, the FBI transform of $\h \o $ is \emph{not} exponentially small
there. By the definition of the analytic wavefront set, we obtain real sequences
$$
\a _j \to E_0 , \qquad \b _j \to 1 , \qquad \l _j \to +\infty , \qquad \kappa _j \to 0^+ ,
$$
such that the entire functions
\begin{equation}
f_j(E) := e^{i\l _j \b _j (\a _j - E) - \l _j (\a _j - E)^2/2} , \qquad E \in \C ,
\label{ZCMM2026d:wave_packet}
\end{equation}
satisfy $\big | (T_{\l _j}(\h \o ))(\a _j , \b _j)\big | \ge e^{-\kappa _j \l _j}$, or
equivalently
$$
\big | \langle \o , \h f_j \rangle \big | \ \ge \ c \, \l _j^{-3/4} e^{-\kappa _j \l _j} ,
\qquad c = \sqrt{2}\, \pi ^{3/4} .
$$
The polynomial prefactor can be absorbed into the exponential: putting
$$
\widetilde{\kappa }_j := \kappa _j + \frac{1}{\l _j}\log \frac{\l _j^{3/4}}{c}
\ \longrightarrow \ 0 \qquad (j \to \infty ),
$$
which tends to zero because $\l _j^{-1}\log \l _j$ does, we get
\begin{equation}
\big | \langle \o , \h f_j \rangle \big | \ \ge \ e^{-\widetilde{\kappa }_j \l _j} .
\label{ZCMM2026d:FBI_lower}
\end{equation}
From here on we write $\kappa _j$ for $\widetilde{\kappa }_j$; the only property used later is
$\kappa _j \to 0$. Finally, writing $E = x+iy$, the packet has modulus
\begin{equation}
|f_j(x+iy)| = \exp \Big ( \l _j \Big [ \b _j y
+ \frac{y^2 - (x-\a _j)^2}{2}\Big ]\Big ) ,
\label{ZCMM2026d:wave_packet_size}
\end{equation}
which is what governs the choice of the boxes in Step 3.
\newline
\noindent
\emph{Step 3: the two boxes.} Choose $b>0$ so small that
$$
[E_0 - 3b, E_0+3b] + i[-3b,3b] \subset U , \qquad b < \tfrac{E_0-1}{4},
$$
and then $a \in (0,b^2)$ so small that
\begin{equation}
2a + 2a^2 \le \frac{b^2}{16}, \qquad a \le \tfrac18 , \qquad
\big ( [E_0-3b,E_0+3b] + i[-3a,3a]\big ) \cap \G _{\th _0} = \emptyset ,
\label{ZCMM2026d:box_condition}
\end{equation}
the last condition being possible because $\dist (E_0, \G _{\th _0}) > 0$. Put
$$
\O=(E_0-2b,E_0+2b)+i(-2a,a), \qquad W = [E_0-b,E_0+b] + i(-a,a] .
$$
Then $\O $ satisfies $(\Ab _\O ^\pm )$: it is contained in $\{ |\re z| >1 \}$ because
$b<(E_0-1)/4$, it meets both half planes, and
$\overline{\O }\cap \G _{\th _0}=\emptyset $ by \eqref{ZCMM2026d:box_condition}; moreover
$W \subset U$.

We claim that, for $j$ so large that $|\a _j-E_0|\le b/2$ and $\frac12 \le \b _j\le 2$,
\begin{multline}
|f_j(E)| \le e^{-C_0\l _j} \quad \text{for }
E \in (\O \setminus W)\cup \{ z \, : \, \dist (z,\pd \O )<2\e_{\star} , \, \im z \le 0 \}, \\
\text{where } \quad C_0 := \min \Big \{ \frac a4 , \frac{b^2}{16}\Big \},
\label{ZCMM2026d:f_small}
\end{multline}
provided $\e_{\star} $ is small enough, and that
\begin{equation}
|f_j(E)| \leq M_j := e^{\l _j a^2/2} \qquad \text{for } E\in W, \, \im E \le 0 .
\label{ZCMM2026d:f_big}
\end{equation}
Indeed, let $E=x+iy$. If $|x-E_0|\ge b$, then $|x-\a _j|\ge b/2$.
On $\O $ we have $y<a$ and $|y|<2a$, and on the part of the $2\e_{\star} $-neighbourhood of $\pd \O $ under consideration we have $y\le 0$; in either case $\b _jy\le 2a$, because
$\b _j\le 2$ when $y>0$ and $\b _jy\le 0$ when $y\le 0$, and $y^2/2\le 2a^2$. Hence the
exponent in \eqref{ZCMM2026d:wave_packet_size} is at most
$\l _j[2a+2a^2-\frac{b^2}{8}]\le -\l _j\frac{b^2}{16}$ by \eqref{ZCMM2026d:box_condition}.
If $|x-E_0|<b$ and $y \le -a$, then $\b _jy \le -a/2$ and
$y^2/2 \le 2a^2$, so the exponent is at most
$\l _j[-\frac a2 + 2a^2] \le -\l _j \frac a4$ for $a \le \frac18$. 
Since a point of $\O$ with $|x-E_0|<b$ and $y>-a$ belongs to $W$,
the set $\O\setminus W$ is covered by these two cases.  The same is
true for $\pd\O\cap\{\im z\le0\}$, where either
$|x-E_0|=2b$ or $y=-2a$.  By continuity of the exponent in
\eqref{ZCMM2026d:wave_packet_size}, the same bound 
remains valid in a sufficiently small $2\e_\star$-neighbourhood of
$\pd\O\cap\{\im z\le0\}$, after decreasing $C_0>0$ if necessary.
This proves \eqref{ZCMM2026d:f_small}.
 Note that the upper edge $\{ y = a \}$ of $\pd \O $, on which
$|f_j|$ is exponentially \emph{large}, is excluded by the restriction $\im z \le 0$ in
Theorem~\ref{ZCMM2026d:local_trace_formula}. Finally, for $E \in W$ with $y\le 0$ we have
$\b _j y \le 0$, so that the exponent is at most $\l _j y^2/2 \le \l _j a^2/2$, which is
\eqref{ZCMM2026d:f_big}.

\emph{Step 4: the local trace formula.} Let $\h $ be the cut-off of
Theorem~\ref{ZCMM2026d:local_trace_formula} associated with $I = \O \cap \R $; for $b$ small
it is supported in $(1,\infty )$ and equals 1 near $E_0$, as required in Step 2. Applying
Theorem~\ref{ZCMM2026d:local_trace_formula} with $f = f_j$, and using \eqref{ZCMM2026d:f_small} to
bound the error term, we obtain
\begin{multline}
\Tr \big [(\h f_{j} )(\Dl _2) - (\h f_{j})(\Dl _1)\big ]   \\ 
= \sum _{z_k \in \Res (\Dl _2)\cap \O } f_j(z_k)
- \sum _{z_k \in \Res (\Dl _1)\cap \O } f_j(z_k) + e^{-C_0\l _j}\Oc (\hbar ^{-3}).
\label{ZCMM2026d:step_local}
\end{multline}

\emph{Step 5: from $\O $ to $W$.} Near $E_0$ the resonances lie in
$S_{\th _0}\cup \R \subset \{ \im z \le 0 \}$, so that
$\Res (\Dl _i)\cap \O \subset \O \cap \{ \im z \le 0\}$, where \eqref{ZCMM2026d:f_small}
applies off $W$. By the upper bound \eqref{ZCMM2026d:upper_bound},
$$
\Big | \sum _{z_k \in \Res (\Dl _i) \cap (\O \setminus W)} f_j(z_k)\Big |
\le e^{-C_0\l _j}\, \# \big ( \Res (\Dl _i)\cap \O \big ) \le C e^{-C_0\l _j}\hbar ^{-3},
$$
so the sums in \eqref{ZCMM2026d:step_local} may be replaced by sums over $W$, at the cost of
enlarging the constant in the error term.

\noindent
\textit{Step 6: the Weyl formula and the choice of the remaining parameters.}
By Theorem~\ref{ZCMM2026d:weyl_trace_formula} applied to
$g=\chi f_j\in C^\infty_0(\R)$ and by \eqref{ZCMM2026d:robert_trace_simplified},
\[
\Tr\big[ (\chi f_j)(\Dl_2)-(\chi f_j)(\Dl_1)\big]
=
2(2\pi\hbar)^{-3}\langle\o,\chi f_j\rangle
+
O\bigl(C(f_j)\hbar^{-2}\bigr).
\]
Substituting this into \eqref{ZCMM2026d:step_local}, with the resonance sums restricted to $W$ as in Step~5, and taking absolute values, we obtain
\begin{equation} 
\left|  \sum_{z_k\in\Res(\Dl_2)\cap W} f_j(z_k)  -  \sum_{z_k\in\Res(\Dl_1)\cap W} f_j(z_k)
\right|  \geq c_1\hbar^{-3} \bigl|\langle \o,\chi f_j\rangle\bigr| \quad -c_2e^{-C_0\lambda_j}\hbar^{-3} -c_3(f_j)\hbar^{-2},
\label{ZCMM2026d:count_pre}
\end{equation} 
where $c_1=2(2\pi)^{-3}$.  We now choose $j$ sufficiently large so that
\eqref{ZCMM2026d:FBI_lower} holds,
\[
   \kappa_j\le \frac{C_0}{2},
   \qquad
   |\alpha_j-E_0|\le \frac b2,
   \qquad
   \frac12\le\beta_j\le2,
\]
and
\[
   c_2e^{-C_0\lambda_j}
   \le
   \frac12c_1e^{-\kappa_j\lambda_j}.
\]
Such a choice is possible because $\kappa_j\to0$,
$\lambda_j\to\infty$, and hence
\[
   e^{-(C_0-\kappa_j)\lambda_j}\longrightarrow0.
\]
With this $j$, and therefore $f_j$, now fixed, choose $\hbar$ so
small that
\[
   0<\hbar\leq    \hbar_j:=    \frac{c_1}{4c_3(f_j)}    e^{-\kappa_j\lambda_j}.
\]
Then
\[
   c_3(f_j)\hbar^{-2}    \leq    \frac14c_1    e^{-\kappa_j\lambda_j}\hbar^{-3}.
\]
Using \eqref{ZCMM2026d:FBI_lower} in \eqref{ZCMM2026d:count_pre}, we conclude that
\begin{equation}
\left|  \sum_{z_k\in\Res(\Dl_2)\cap W} f_j(z_k)
 -  \sum_{z_k\in\Res(\Dl_1)\cap W} f_j(z_k)
\right|
\geq \frac{c_1}{4}
e^{-\kappa_j\lambda_j}\hbar^{-3},
\qquad 0<\hbar\le\hbar_j.
\label{ZCMM2026d:count_lower}
\end{equation}
\noindent
\emph{Step 7: counting.} Every term on the left-hand side of \eqref{ZCMM2026d:count_lower} is
bounded by $M_j$ of \eqref{ZCMM2026d:f_big}, the resonances being counted with multiplicity
and lying in $\{ \im z \le 0\}$.  Hence
\[
\sum_{i=1}^2 \#\bigl(\Res(\Dl_i)\cap U\bigr) \geq \sum_{i=1}^2 \#\bigl(\Res(\Dl_i)\cap W\bigr)
\geq \frac{c_1}{4M_j} e^{-\kappa_j\lambda_j}\hbar^{-3} =:C(U)\hbar^{-3},
\]
for $0<\hbar\le\hbar_j$, with
\[
C(U) = \frac{c_1}{4} \exp\left[ -\lambda_j\left( \kappa_j+\frac{a^2}{2} \right)
\right] >0.
\]
This proves Theorem~\ref{ZCMM2026d:thm_rel} near $E_0$; the statement near $-E_0$ 
follows from Lemma~\ref{ZCMM2026d:susy_lemma}, cf.\ Remark~\ref{ZCMM2026d:remark_negative}.
\end{proof}

\begin{remark}
The proof above follows \cite{jkmm14} in outline, and we record the points at which it differs, 
for the reader who compares the two. \emph{(i)} The two box parameters must
satisfy $a \lesssim b^2$, and not merely $a/b \ll 1$: on the vertical edges of $\pd \O $
the exponent in \eqref{ZCMM2026d:wave_packet_size} equals $\l _j [2a+2a^2-\frac{b^2}{8}]$,
which for $a=b/10$ is $\approx \l _j (\frac b5 - \frac{b^2}{8})>0$. \emph{(ii)} The
passage from sums over $\O $ to sums over $W$ in Step 5 uses the upper bound of
Theorem~\ref{ZCMM2026d:upper_bound_thm} explicitly. \emph{(iii)} The restriction
$\im z \le 0$ in the error term of Theorem~\ref{ZCMM2026d:local_trace_formula} is essential
and not merely convenient: on the upper edge $\{ \im z = a\}$ of $\pd \O $ the wave packet
$f_j$ is exponentially \emph{large}, by \eqref{ZCMM2026d:wave_packet_size}. A fourth difference,
concerning the comparison potential, is discussed in Appendix~\ref{ZCMM2026d:appendix_comparison}.
\label{ZCMM2026d:remark_differences}
\end{remark}

\begin{proof}[Proof of Theorem~\ref{ZCMM2026d:thm_abs}]
Put $\ve ' := \ve /4$ and fix the distortion parameter and the comparison neighbourhood as in
Proposition~\ref{ZCMM2026d:comparison_potential}: a parameter $\th _0 \in D_\ve ^+$ with
$\im \th _0 > 0$ and $|\th _0| \le \ve '/4$, and a complex neighbourhood
$U' \Subset \C $ of $E_0$ with
$$
d(\th _0) := \dist \big ( \overline{U'} , \G _{\th _0}\big ) > 0 ,
$$
which is possible because $E_0>1$ is real while $\th _0$ is not, so that
$\dist (E_0, \G _{\th _0})>0$. This is no restriction: by
Proposition~\ref{ZCMM2026d:khochmanprop}(iii) the resonances themselves do not depend on the
distortion family, and the two conditions on $\th _0$ serve only the construction of the
appendix --- the second of them is what places the dilated ray $(1+\th _0)\R ^3$ inside the
cone $C_{\ve ',0}$ on which the comparison potential is uniformly small.

By Proposition~\ref{ZCMM2026d:comparison_potential} of
Appendix~\ref{ZCMM2026d:appendix_comparison} there exists a real-valued $v_1$, entire on
$\C ^3$, such that
\begin{itemize}
\item[(P1)] the pair $(v_1,v_2)$ satisfies Assumption $(\Ab _\d )$ with some $\d >3$, and
$v_1$ is entire with $R_0=0$;
\item[(P2)] $\sup _{\R ^3}|v_1| < \sup v_2$, so that $E_0 = \max \supp (\m )$;
\item[(P3)] $\Res (\Dl _1) \cap U' = \emptyset $.
\end{itemize}
In particular $\m \ne 0$, and $E_0 = \max \supp (\m )$ is a boundary point of $\supp (\m )$
with $E_0>1$, so that Theorem~\ref{ZCMM2026d:thm_rel} applies to the pair $(\Dl _1, \Dl _2)$.

Let $U$ be a complex neighbourhood of $E_0$; shrinking it we may assume $U \subset U'$. By
(P1), (P2) and Theorem~\ref{ZCMM2026d:thm_rel} there is $C(U)>0$ with
$$
\sum _{i=1}^2 \# \big ( \Res (\Dl _i)\cap U \big ) \ge C(U)\hbar ^{-3}
$$
for $\hbar $ small, while the term with $i=1$ vanishes by (P3). Hence
$\# (\Res (\Dl _2)\cap U) \ge C(U)\hbar ^{-3}$. The statement near $-E_0$ follows from
Lemma~\ref{ZCMM2026d:susy_lemma}.
\end{proof}

\subsection{Trapping and its relation to the resonance-free domain}
\label{ZCMM2026d:trapping_section}
We finally discuss the dynamical meaning of the energy appearing in the single-potential result and its relation to the resonance-free theorem of 
Chen--Melgaard~\cite{chen_melgaard26}.  This discussion concerns Theorem~\ref{ZCMM2026d:thm_abs}: no corresponding trapping assertion is made
here for an arbitrary boundary point of $\supp(\m)$ occurring in the relative Theorem~\ref{ZCMM2026d:thm_rel}.

For the single-potential theorem, the distinguished positive energy is
$E_0=1+\sup_{x\in\R^3}v(x)$.  This energy is necessarily trapping.  More precisely, 
the maximum of the mass function gives an equilibrium of the Hamilton flow of the 
positive energy branch.

\begin{proposition}
\label{ZCMM2026d:prop_trapping}
Let $v\in C^\infty(\R^3;\R)$ satisfy $v(x)\to0$ as $|x|\to\infty$ and
$\sup v>0$, and put
\[
E_0=1+\sup_{x\in\R^3}v(x) =\sup_{x\in\R^3} m(x).
\]
Then the supremum is attained at some $x_0\in\R^3$, and:
\begin{enumerate}
\item $(x_0,0)\in p_+^{-1}(E_0)$ is an equilibrium of the Hamilton flow of $p_+$;
\item
$E_0$ is not a nontrapping energy in the sense of \cite[Definition~2.3]{chen_melgaard26};
\item no escape function in the sense of
\cite[Definition~2.4]{chen_melgaard26} exists at energy $E_0$: for every $G\in C^\infty(T^*\R^3;\R)$ one has
\[
\{p_+,G\}(x_0,0)=0,
\]
so that
\[
\{p_+,G\}\geq C>0  \qquad\text{on }p_+^{-1}(E_0)
\]
is impossible.
\end{enumerate}
The analogous statements hold for $p_-$ at the energy $-E_0$.
\end{proposition}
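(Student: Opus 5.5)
The argument is elementary and proceeds in three stages: attainment of the supremum, the equilibrium property, and the two consequences for trapping and escape functions.

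\emph{Attainment.} Since $\sup v>0$ while $v(x)\to0$ as $|x|\to\infty$, choose $R$ such that $|v(x)|<\tfrac12\sup v$ for $|x|>R$. The supremum of $v$ over $\R^3$ then equals its supremum over the compact ball $\overline{B(0,R)}$. By continuity it is attained at some $x_0$ with $|x_0|\le R$, and $m(x_0)=E_0$.

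\emph{Equilibrium.} Near any point where $m>0$ we have $p_+(x,\x)=(m(x)^2+|\x|^2)^{1/2}$, which is smooth there. Its Hamilton vector field is
\[
H_{p_+}=\frac{\x}{p_+}\cdot\pd_x-\frac{m\nabla m}{p_+}\cdot\pd_\x .
\]
At $(x_0,0)$ the first component vanishes because $\x=0$, and the second vanishes because $\nabla m(x_0)=\nabla v(x_0)=0$ at an interior maximum. Moreover $p_+(x_0,0)=m(x_0)=E_0$. Hence $(x_0,0)\in p_+^{-1}(E_0)$ is a fixed point of the flow, which proves (1).

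\emph{Consequences.} For (3), take any $G\in C^\infty(T^*\R^3;\R)$. Then $\{p_+,G\}(x_0,0)=H_{p_+}G(x_0,0)=0$, since $H_{p_+}$ vanishes at that point. A lower bound $\{p_+,G\}\ge C>0$ on $p_+^{-1}(E_0)$ is therefore impossible, because this energy surface contains $(x_0,0)$.

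For (2), I will recall \cite[Definition~2.3]{chen_melgaard26}. Nontrapping at $E_0$ means that every trajectory on $p_+^{-1}(E_0)$ (or on a neighbourhood of it) leaves every compact set, that is, $|x(t)|\to\infty$ as $t\to\pm\infty$. The constant trajectory through $(x_0,0)$ stays at $x_0$ for all time and so violates this. If instead the definition is phrased through an escape function, then (2) follows directly from (3). The only point needing care is matching the exact formulation of that definition, and either reading gives the claim.

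\emph{Negative branch.} For $p_-=-p_+$ we have $H_{p_-}=-H_{p_+}$. Thus $(x_0,0)$ is also an equilibrium, now lying on $p_-^{-1}(-E_0)$, and the same arguments carry over verbatim.
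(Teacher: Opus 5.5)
Your proposal is correct and follows the paper's proof essentially step for step. Both arguments attain the maximum at $x_0$ and observe that both components of $H_{p_+}$ vanish at $(x_0,0)$, because $\x=0$ and $\nabla v(x_0)=0$; both then read off (2), (3) and the $p_-$ case from the constant trajectory and from $\{p_+,G\}(x_0,0)=H_{p_+}G(x_0,0)=0$. Your compactness argument for attainment, and your remark that $p_+$ is smooth near $(x_0,0)$ because $m(x_0)=E_0>0$, only make explicit what the paper leaves implicit.
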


\begin{proof}
Since $v(x)\to0$ as $|x|\to\infty$ and $\sup v>0$, the supremum of $v$ is attained at some $x_0\in\R^3$.  At such a point $\nabla v(x_0)=0$.  Since
\[
 p_+(x,\xi)=\bigl(m(x)^2+|\xi|^2\bigr)^{1/2},
        \qquad m(x)=1+v(x),
\]
we have
\[
\nabla_\xi p_+(x,\xi)  =\frac{\xi}{p_+(x,\xi)}, \qquad  \nabla_x p_+(x,\xi) =\frac{m(x)\nabla v(x)}{p_+(x,\xi)}.
\]
Consequently,
\[
\nabla_\xi p_+(x_0,0)=0,  \qquad  \nabla_x p_+(x_0,0)=0,  \qquad  p_+(x_0,0)=m(x_0)=E_0.
\]
Thus the Hamilton vector field $H_{p_+}$ vanishes at $(x_0,0)$, so the trajectory through this point is constant.  It therefore cannot escape from compact subsets of $p_+^{-1}(E_0)$, which proves that $E_0$ is trapping and gives (i)--(ii).

Moreover, for every smooth real-valued $G$,
\[
 \{p_+,G\}(x_0,0)  =H_{p_+}G(x_0,0)=0.
\]
Hence no escape function can satisfy a strictly positive lower bound for $\{p_+,G\}$ on the whole energy surface $p_+^{-1}(E_0)$, proving (iii).

Since $p_-=-p_+$, the same argument applies to the energy $-E_0$.
\end{proof}

Proposition~\ref{ZCMM2026d:prop_trapping} places the energy singled out by
Theorem~\ref{ZCMM2026d:thm_abs} outside the nontrapping regime of
Chen--Melgaard~\cite[Theorem~5.11]{chen_melgaard26}.  In particular,
that resonance-free theorem cannot be applied with $\lambda_0=E_0$:
its nontrapping hypothesis necessarily fails there.  Thus there is no
conflict between the resonance-free strip obtained in \cite[Theorem~5.11]{chen_melgaard26} 
at nontrapping energies and the lower bound of Theorem~\ref{ZCMM2026d:thm_abs}, 
which produces at least $C\hbar^{-3}$ resonances in every complex neighbourhood of the 
trapping energy $E_0$.

The two results do, however, combine to the right of $E_0$.  If every
energy in a compact interval
\[
[E_0+\zeta',E_0+\zeta],        \qquad 0<\zeta'<\zeta,
\]
is nontrapping and satisfies the hypotheses of \cite[Theorem~5.11]{chen_melgaard26}, then the resonance-free domains obtained there can be chosen uniformly on this interval.  This confines
all resonances in the corresponding box to lie either to the left of
$E_0+\zeta'$ or below a logarithmic strip.

\begin{corollary}
\label{ZCMM2026d:cor_wedge}
Let $\Dl_2=\Dl_0+\beb v_2$ be as in
Theorem~\ref{ZCMM2026d:thm_abs}, and put
$E_0:=1+\sup_{x\in\R^3}v_2(x)$.  Let $0<\zeta'<\zeta$, and assume that every energy in the interval
$I=[E_0+\zeta',E_0+\zeta]$ is nontrapping and satisfies the hypotheses of \cite[Theorem~5.11]{chen_melgaard26}. Then 
there exist constants $C>0$ and $\hbar_0\in(0,1]$ such that,
for $0<\hbar<\hbar_0$, every resonance $z$ of $\Dl_2$ satisfying
\[
        E_0-\zeta<\re z<E_0+\zeta,
        \qquad
        -\zeta<\im z<0,
\]
satisfies
\[
        \re z\leq E_0+\zeta'
        \qquad\text{or}\qquad
        \im z\leq -C\hbar\log\hbar^{-1}.
\]
\end{corollary}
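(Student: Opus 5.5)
The plan is to deduce Corollary~\ref{ZCMM2026d:cor_wedge} directly from \cite[Theorem~5.11]{chen_melgaard26}, applied at every energy of $I=[E_0+\zeta',E_0+\zeta]$. We then patch the resulting local resonance-free boxes together by compactness. For each $\lambda_0\in I$, the hypotheses give constants $\zeta_0(\lambda_0)>0$, $C(\lambda_0)>0$ and $\hbar_0(\lambda_0)>0$ such that $\Dl_2$ has no resonances in
\[
[\lambda_0-\zeta_0,\lambda_0+\zeta_0]-i[0,C\hbar\log\hbar^{-1}]
\qquad\text{for }0<\hbar<\hbar_0 .
\]
If these constants could be taken independent of $\lambda_0$, the conclusion would be immediate. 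A resonance with $E_0+\zeta'<\re z<E_0+\zeta$ and $\im z<0$ has its real part in $I$. It therefore lies in the box attached to $\lambda_0=\re z$. Being a resonance, it must satisfy $\im z\le -C\hbar\log\hbar^{-1}$. Resonances with $\re z\le E_0+\zeta'$ are covered by the first alternative, so nothing needs to be shown for them.

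The key step is the uniformity, and I would obtain it by compactness. The trapping condition of \cite[Definition~2.3]{chen_melgaard26} is open in the energy: if no trajectory of $H_{p_+}$ on $p_+^{-1}(\lambda)$ stays bounded, the same holds for nearby energies. Concretely, I would use escape functions. By \cite[Definition~2.4]{chen_melgaard26}, at each $\lambda_0\in I$ there is $G_{\lambda_0}$ with $\{p_+,G_{\lambda_0}\}\ge c>0$ on $p_+^{-1}(\lambda_0)$ over the relevant compact region. By continuity, the same inequality with $c/2$ holds on $p_+^{-1}([\lambda_0-\eta,\lambda_0+\eta])$. The constants in the proof of \cite[Theorem~5.11]{chen_melgaard26} depend only on $G$, $c$, finitely many symbol seminorms, and the width of the energy window. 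Hence they are locally uniform in $\lambda_0$.

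Cover $I$ by finitely many intervals $(\lambda_k-\zeta_0(\lambda_k)/2,\lambda_k+\zeta_0(\lambda_k)/2)$. Then take $C=\min_k C(\lambda_k)$ and $\hbar_0=\min_k\hbar_0(\lambda_k)$. Shrinking $C$ only shrinks each resonance-free box, so each box remains resonance-free. The union of the boxes contains $I-i[0,C\hbar\log\hbar^{-1}]$. I would also take $\hbar_0$ small enough that $C\hbar\log\hbar^{-1}<\zeta$, so that the strip lies inside the region $-\zeta<\im z<0$ considered in the statement.

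The main obstacle is the precise form of \cite[Theorem~5.11]{chen_melgaard26}. If it is stated pointwise in $\lambda_0$, without explicit dependence of the constants, the finite-cover argument above needs no uniformity at all: finitely many $\lambda_k$ suffice, and each comes with its own constants. I expect this to be the cleanest route, because it avoids reopening their proof. The only extra check is that resonances in the statement are those of Definition~\ref{ZCMM2026d:definition_resonances}. By Proposition~\ref{ZCMM2026d:khochmanprop}(iii) these coincide with the resonances used in \cite{chen_melgaard26}, since there too they are defined through a complex distortion. Finally, the real axis part $\im z=0$ is excluded by the hypothesis $\im z<0$, so no statement about embedded eigenvalues is needed.
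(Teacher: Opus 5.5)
Your proposal is correct. In the form you settle on at the end, it is exactly the paper's argument: apply \cite[Theorem~5.11]{chen_melgaard26} pointwise at each $\lambda\in I$, extract a finite subcover of $I$, and set $C=\min_k C(\lambda_k)$ and $\hbar_0=\min_k\hbar_0(\lambda_k)$, so that $I-i[0,C\hbar\log\hbar^{-1}]$ is resonance-free. The escape-function discussion of locally uniform constants is unnecessary, and, as the paper does, you should also take $\hbar_0\le 1$ so that $\log\hbar^{-1}>0$.
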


\begin{proof}
Let $\lambda\in I$.  By hypothesis $\lambda$ is nontrapping and satisfies
the assumptions of \cite[Theorem~5.11]{chen_melgaard26}.  Hence there
exist $\delta_\lambda>0$, $C_\lambda>0$ and $\hbar_\lambda>0$ such that
\[
  \Res(\Dl_2)\cap \Bigl( [\lambda-\delta_\lambda,\lambda+\delta_\lambda]
        -i[0,C_\lambda\hbar\log\hbar^{-1}] \Bigr)        =\emptyset,         \qquad         0<\hbar<\hbar_\lambda.
\]
The intervals
\[
 (\lambda-\delta_\lambda,\lambda+\delta_\lambda), \qquad \lambda\in I,
\]
form an open cover of the compact interval $I$.  Choose a finite
subcover corresponding to $\lambda_1,\ldots,\lambda_N$, and put
\[
 C:=\min_{1\leq k\leq N}C_{\lambda_k}, \qquad \hbar_0:= \min\{\hbar_{\lambda_1},\ldots,\hbar_{\lambda_N},1\}.
\]
Then
\[
 \Res(\Dl_2)\cap\Bigl( I-i[0,C\hbar\log\hbar^{-1}]  \Bigr) =\emptyset, \qquad  0<\hbar<\hbar_0.
\]
Now let $z$ be a resonance of $\Dl_2$ in
\[
(E_0-\zeta,E_0+\zeta)-i(0,\zeta)
\]
and suppose that $\re z>E_0+\zeta'$.  Then
\[
\re z\in(E_0+\zeta',E_0+\zeta)\subset I.
\]
If also
\[
\im z>-C\hbar\log\hbar^{-1},
\]
then, since $\im z<0$,
\[
z\in I-i[0,C\hbar\log\hbar^{-1}],
\]
contradicting the resonance-free property above.  Therefore
\[
\im z\leq-C\hbar\log\hbar^{-1},
\]
which proves the assertion.
\end{proof}

\noindent
\textit{Acknowledgement}. The authors would like to thank the China Scholarship Council (CSC) 
for the award of a PhD studentship to the first author (CSC grant no.\ 202106740004). The second author also acknowledges financial support from the Dr Perry James Browne Sussex Research 
Centre on Mathematics and its Applications.


%
\appendix
\section{The semiclassical trace formula}
\label{ZCMM2026d:appendix_trace}
In this appendix we prove the relative semiclassical trace formula of
Theorem~\ref{ZCMM2026d:weyl_trace_formula}. Nothing in the proof uses the mass-type structure.
What is used is that $\Pot _1$ and $\Pot _2$ are Hermitian matrix-valued multiplication
potentials with the symbol bounds stated in that theorem, that the Dirac symbols
$\dsym_j(x,\x)=\sum_k\alb_k\x_k+\beb+\Pot_j(x)$ are elliptic of order one (which is
automatic from their Dirac principal part), and that
$\Wpot:=\Pot_2-\Pot_1$ has $x$-decay of order $\d>3$. No separation of the eigenvalue
branches of $\dsym_j$ is required. The trace appearing below is always the relative trace
$\Tr\big(g(\Dl_2)-g(\Dl_1)\big)$; the individual operators $g(\Dl_j)$ need not be trace class, so a 
literal difference of their separate traces is not intended.

We use the Weyl quantisation
$$
\big ( \Opw (\ab )u\big )(x) = (2\pi \hbar )^{-3}\int \limits _{\R ^6}
e^{i(x-y)\cdot \x /\hbar } \ab \Big ( \frac{x+y}{2},\x \Big ) u(y)\, dy \, d\x ,
$$
for which $\Dl _j = \Opw (\dsym _j)$ exactly, and, for $\asf ,\bsf \in \R $, the symbol
classes
$$
\Sigma ^{\asf ,\bsf } := \Big \{ \ab \in \cs (\R ^6;\Mfour ) \, : \,
|\partial _x^\a \partial _\x ^\g \ab (x,\x )| \le C_{\a \g }
\kum{x}^{-\asf -|\a |}\kum{\x }^{-\bsf -|\g |} \Big \} ,
$$
that is, H\"ormander's classes $S(\kum{x}^{-\asf }\kum{\x }^{-\bsf }, g_0)$ for the metric
$g_0 = \kum{x}^{-2}|dx|^2 + \kum{\x }^{-2}|d\x |^2$, whose Planck function is
$(\kum{x}\kum{\x })^{-1}$; each order of the semiclassical expansion therefore gains one
unit of decay in \emph{both} variables. Under the hypotheses of
Theorem~\ref{ZCMM2026d:weyl_trace_formula} we have $\Pot _j \in \Sigma ^{0,0}$,
$\Wpot \in \Sigma ^{\d , 0}$ and $\dsym _j - z \in \Sigma ^{0,-1}$ uniformly for $z$ in a
compact set. We shall use the following standard facts of the Weyl calculus, all of which
hold verbatim for $\Mfour $-valued symbols, the only change being that the Moyal expansion
keeps the order of the matrix factors; see \cite[\S 18.4--18.6]{hr_3},
\cite[Chapters 7 and 9]{dimassi_sjostrand99} and \cite[Chapter 2]{martinez02}.

\begin{itemize}
\item[(Q1)] \emph{Composition.} If $\ab \in \Sigma ^{\asf _1,\bsf _1}$ and
$\bb \in \Sigma ^{\asf _2,\bsf _2}$ then
$\Opw (\ab )\Opw (\bb ) = \Opw (\ab \# \bb )$ with
$\ab \# \bb \in \Sigma ^{\asf _1+\asf _2, \bsf _1 + \bsf _2}$ and, for every $N$,
$$
\ab \# \bb - \sum _{k<N}\frac{1}{k!}\Big ( \frac{i\hbar }{2}\Big )^k
\big ( \partial _\x \cdot \partial _y - \partial _x\cdot \partial _\y \big )^k
\ab (x,\x )\bb (y,\y )\Big |_{\substack{y=x \\ \y = \x }}
$$
$$
\in \ \hbar ^N \Sigma ^{\asf _1+\asf _2+N, \; \bsf _1+\bsf _2+N},
$$
with seminorms bounded by finitely many seminorms of $\ab $ and $\bb $. The term $k=0$ is
$\ab \bb $.
\item[(Q2)] \emph{Boundedness.} If $\ab \in \Sigma ^{0,0}$ then $\Opw (\ab )$ is bounded
on $L^2$, with norm bounded by $C\sum _{|\a |+|\g |\le 7}\sup |\partial ^\a _x
\partial _\x ^\g \ab |$, uniformly in $\hbar \in (0,1]$.
\item[(Q3)] \emph{Trace class and trace.} If $\partial ^\a \ab \in L^1(\R ^6)$ for
$|\a | \le 7$, then $\Opw (\ab )\in \Sone $ with
$\| \Opw (\ab )\| _{\Sone } \le C\hbar ^{-3}\sum _{|\a |\le 7}
\| \partial ^\a \ab \| _{L^1}$ and
\begin{equation}
\Tr \Opw (\ab ) = (2\pi \hbar )^{-3}\int \limits _{\R ^6}\tr \, \ab (x,\x )\, dx\, d\x .
\label{ZCMM2026d:trace_identity}
\end{equation}
In particular $\ab \in \Sigma ^{\asf ,\bsf }$ with $\asf >3$ and $\bsf >3$ is admissible.
\item[(Q4)] \emph{Continuity.} $\ab \mapsto \Opw (\ab )$ is continuous from
$\Sigma ^{\asf ,\bsf }$ to $\bopp{L^2}$ for $\asf ,\bsf \ge 0$ and to $\Sone $ for
$\asf ,\bsf >3$; in particular it commutes with absolutely convergent symbol-valued
integrals.
\end{itemize}

Recall the Helffer--Sj\"ostrand formula: for $g\in \ccs (\R )$ with
$\supp g \subset [-R,R]$ and every $M\in \N $ there is an almost analytic extension
$\tilde{g} \in \ccs (\C )$ with $\tilde{g}|_\R = g$,
$\supp \tilde{g} \subset \{ |\re z| \le R, \, |\im z| \le 1\}$ and
$|\bar \partial \tilde{g}(z)| \le C_M |\im z|^M$, such that for any self-adjoint operator
$A$,
\begin{equation}
g(A) = -\frac{1}{\pi }\int \limits _\C \bar \partial \tilde{g}(z)(A-z)^{-1}\, \dLeb (dz),
\label{ZCMM2026d:HS}
\end{equation}
the integral converging in operator norm; the same formula, with the same $\tilde{g}$,
holds for a Hermitian matrix $\dsym (x,\x )$ in place of $A$.

\begin{lemma}
Let $\eb _j (x,\x ,z) := (\dsym _j(x,\x )-z)^{-1}$ for $|\re z| \le R$ and
$0<|\im z| \le 1$. Then
$$
\big |\partial _x^\a \partial _\x ^\g \eb _j\big | \le C_{\a \g }
|\im z|^{-1-|\a |-|\g |}\kum{x}^{-|\a |}\kum{\x }^{-1-|\g |},
$$
i.e.\ $\eb _j (\cdot ,z) \in \Sigma ^{0,1}$ with seminorms
$\Oc (|\im z|^{-1-|\a |-|\g |})$. Moreover $z\mapsto \eb _j(x,\x ,z)$ is holomorphic off
$\sm \dsym _j (x,\x ) \subset \R $ and, for
$\kum{\x } \ge \L := C_0 (1 + \sup _j \| \Pot _j\| _\infty + R)$, it is holomorphic on a
neighbourhood of $\supp \tilde{g}$.
\label{ZCMM2026d:resolvent_symbol}
\end{lemma}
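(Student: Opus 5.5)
We prove the two estimates of Lemma~\ref{ZCMM2026d:resolvent_symbol} separately: first the case without derivatives, then the derivative bounds via the resolvent identity, and finally the holomorphy statement.

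\emph{Zeroth-order estimate.} Since $\dsym_j(x,\x)$ is Hermitian, the spectral theorem for matrices gives $\|\eb_j\|\le |\im z|^{-1}$. To obtain the extra factor $\kum{\x}^{-1}$, note that $\Pot_j$ is bounded, so every eigenvalue $\lambda$ of $\dsym_j(x,\x)$ satisfies $|\lambda|\ge |\x|-C$. This holds because $(\sum_k\alb_k\x_k+\beb)^2=(1+|\x|)^2$-type bounds give singular values $\kum{\x}$, and Weyl's perturbation inequality adds at most $\|\Pot_j\|_\infty$. We distinguish two regimes.
\begin{itemize}
\item[(a)] For $\kum{\x}\ge\L$ with $C_0$ large, every eigenvalue satisfies $|\lambda-z|\ge |\lambda|-R\ge \kum{\x}/2$. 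Hence $\|\eb_j\|\le 2\kum{\x}^{-1}\le 2\kum{\x}^{-1}|\im z|^{-1}$, using $|\im z|\le1$.
\item[(b)] For $\kum{\x}\le\L$ the factor $\kum{\x}^{-1}$ is bounded below by $\L^{-1}$, so it is absorbed into the constant.
\end{itemize}
Combining the two regimes gives $\|\eb_j\|\le C|\im z|^{-1}\kum{\x}^{-1}$.

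\emph{Derivatives.} We differentiate using $\partial\eb_j=-\eb_j(\partial\dsym_j)\eb_j$ and induct. By Leibniz, $\partial_x^\a\partial_\x^\g\eb_j$ is a finite sum of products
\[
\pm\,\eb_j\,(\partial^{\a_1}_x\partial^{\g_1}_\x\dsym_j)\,\eb_j\cdots(\partial^{\a_k}_x\partial^{\g_k}_\x\dsym_j)\,\eb_j ,
\]
with $k\le|\a|+|\g|$, $\sum\a_i=\a$, $\sum\g_i=\g$, and each $|\a_i|+|\g_i|\ge1$. The symbol is affine in $\x$, so each factor has one of two forms. If $\a_i=0$, then $|\g_i|=1$ and the factor is a constant matrix $\alb_l$. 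If $\a_i\ne0$, then $\g_i=0$ and the factor is $\partial^{\a_i}\Pot_j=\Oc(\kum{x}^{-|\a_i|})$. Consequently each term is bounded by
\[
\|\eb_j\|^{k+1}\prod_i\kum{x}^{-|\a_i|}\le C|\im z|^{-1-k}\kum{\x}^{-1-k}\kum{x}^{-|\a|}.
\]
Since $|\g|\le k\le|\a|+|\g|$, $|\im z|\le1$ and $\kum{\x}\ge1$, this is at most $C|\im z|^{-1-|\a|-|\g|}\kum{\x}^{-1-|\g|}\kum{x}^{-|\a|}$, which is the stated bound. In the regime $\kum{\x}\ge\L$ no negative power of $|\im z|$ is actually needed, but the statement does not require tracking this.

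\emph{Holomorphy.} $z\mapsto\eb_j$ is a rational matrix function whose poles lie in $\sm\dsym_j\subset\R$. For $\kum{\x}\ge\L$ the spectrum lies in $\{|\lambda|\ge\kum{\x}/2+R\}$, which is disjoint from a neighbourhood of $\{|\re z|\le R,|\im z|\le1\}\supset\supp\tilde g$. The only delicate point is fixing $C_0$ so that the eigenvalue lower bound $|\lambda|\ge\kum{\x}-C(1+\|\Pot_j\|_\infty)$ holds. This follows from $\big(\sum_k\alb_k\x_k+\beb\big)^2=\kum{\x}^2\Idb_4$ together with Weyl's inequality for Hermitian perturbations.
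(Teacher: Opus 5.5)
Your proof is correct and is essentially the paper's argument: the Hermitian bound $\|\eb_j\|\le|\im z|^{-1}$, then the eigenvalue lower bound from $(\sum_k\alb_k\x_k+\beb)^2=\kum{\x}^2\Idb_4$ together with Weyl's inequality, which gives both the $\kum{\x}^{-1}$ gain and the holomorphy for $\kum{\x}\ge\L$, and finally iteration of $\partial\eb_j=-\eb_j(\partial\dsym_j)\eb_j$. Your bookkeeping in the derivative step is slightly more explicit than the paper's: each $\x$-derivative produces a constant factor $\alb_l$, so $|\g|\le k\le|\a|+|\g|$, and every resolvent factor carries $\kum{\x}^{-1}$. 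The only fix needed is to replace the phrase about ``$(1+|\x|)^2$-type bounds'' by the exact identity you state at the end.
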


\begin{proof}
Since $\dsym _j$ is Hermitian,
$\| \eb _j \| = \dist (z, \sm \dsym _j)^{-1}\le |\im z|^{-1}$. The eigenvalues of
$\dsym _j = \sum _k \alb _k \x _k + \beb + \Pot _j$ satisfy
$|\l | \ge \kum{\x } - 1 - \| \Pot _j\| _\infty $, because the unperturbed ones are
$\pm \kum{\x }$ and a Hermitian perturbation moves them by at most
$\| \Pot _j\| _\infty $. Hence for $\kum{\x } \ge \L $ we get
$\| \eb _j\| \le 2\kum{\x }^{-1}$ and the last assertion. Combined with
$\| \eb _j\| \le |\im z|^{-1}$ and $|\im z| \le 1$ this gives the case
$\a = \g = 0$.  For derivatives we start from $\partial\eb_j=-\eb_j(\partial\dsym_j)\eb_j$ and 
iterate this identity. A mixed derivative $\partial_x^\a\partial_\x^\g\eb_j$ is a finite sum of 
products of the form
\[
 \eb_j(\partial^{\nu_1}\dsym_j)\eb_j\cdots
 (\partial^{\nu_q}\dsym_j)\eb_j,
 \qquad \nu_1+\cdots+\nu_q=(\a,\g),
\]
with $1\le q\le |\a|+|\g|$; terms with higher $x$-derivatives of $\Pot_j$ may contain
fewer than $1+|\a|+|\g|$ resolvent factors. Since $0<|\im z|\le1$, replacing the actual
power of $|\im z|^{-1}$ by the larger bound
$|\im z|^{-1-|\a|-|\g|}$ is harmless. Moreover every $x$-derivative falling on
$\dsym_j$ gains the corresponding factor $\kum{x}^{-1}$, while every $\x$-derivative is
carried by resolvent factors and gains one power of $\kum{\x}^{-1}$ in the large-$\x$
region. Combining this with the zeroth-order bounds proves the asserted
$\Sigma^{0,1}$ estimates.
\end{proof}

\begin{lemma}
Fix $N \in \N $. There are symbols $\bb _{j,k}(\cdot ,z), \bt _{j,k}(\cdot ,z)
\in \Sigma ^{k,1+k}$, $0\le k<N$, with $\bb _{j,0}=\bt _{j,0} = \eb _j$, whose seminorms
are $\Oc (|\im z|^{-M_k})$, which are holomorphic in $z$ wherever $\eb _j$ is, and such
that, with $\bb _j^{(N)} = \sum _{k<N}\hbar ^k \bb _{j,k}$ and
$\bt _j^{(N)}=\sum _{k<N}\hbar ^k \bt _{j,k}$,
$$
\bb _j^{(N)}\# (\dsym _j - z) = 1 + \hbar ^N \rb _{j,N}, \qquad
(\dsym _j - z) \# \bt _j^{(N)} = 1 + \hbar ^N \tilde{\rb }_{j,N},
$$
with $\rb _{j,N}, \tilde{\rb }_{j,N} \in \Sigma ^{N,N}$ with seminorms
$\Oc (|\im z|^{-M_N})$. Consequently, with $\Rb _j(z) = (\Dl _j - z)^{-1}$,
\begin{equation}
\Rb _j = \Opw (\bb _j^{(N)}) - \hbar ^N \Opw (\rb _{j,N})\Rb _j , \qquad
\Rb _j = \Opw (\bt _j^{(N)}) - \hbar ^N \Rb _j \Opw (\tilde{\rb }_{j,N}).
\label{ZCMM2026d:parametrices}
\end{equation}
\label{ZCMM2026d:parametrix_lemma}
\end{lemma}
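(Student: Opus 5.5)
We build the parametrix by the standard symbolic recursion, using the matrix Moyal expansion (Q1) and the resolvent-symbol bounds of Lemma~\ref{ZCMM2026d:resolvent_symbol}. The left and right constructions are symmetric, so we describe the left one. We make the ansatz $\bb_j^{(N)}=\sum_{k<N}\hbar^k\bb_{j,k}$ and expand $\bb_j^{(N)}\#(\dsym_j-z)$ with (Q1), keeping the order of the matrix factors. Since $\dsym_j-z$ is affine in $\x$ and depends on $x$ only through $\Pot_j(x)$, the $k$-th Moyal term applied to $(\bb,\dsym_j-z)$ involves $\partial_\x^\g\bb\,\partial_x^\g\Pot_j$ and $\partial_x^\a\bb\,\partial_\x^\a(\sum\alb_l\x_l)$. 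The latter vanishes for $|\a|\ge2$.

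The construction then proceeds order by order.

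At order $\hbar^0$ we need $\bb_{j,0}(\dsym_j-z)=1$, which forces $\bb_{j,0}=\eb_j$.

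At order $\hbar^k$, $k\ge1$, we require
\[
\bb_{j,k}(\dsym_j-z)=-\sum_{l+m=k,\,m\ge1}\frac{1}{m!}\Big(\frac{i}{2}\Big)^m\big(\partial_\x\cdot\partial_y-\partial_x\cdot\partial_\y\big)^m\bb_{j,l}(x,\x)(\dsym_j(y,\y)-z)\Big|_{y=x,\y=\x}=: -\cb_{j,k}.
\]
We then set $\bb_{j,k}:=-\cb_{j,k}\eb_j$.

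By induction we show that $\bb_{j,k}\in\Sigma^{k,1+k}$ with seminorms $\Oc(|\im z|^{-M_k})$. This follows from the bookkeeping of Lemma~\ref{ZCMM2026d:resolvent_symbol}. Each term of $\cb_{j,k}$ carries $m$ derivatives distributed as $x$-derivatives on one factor and $\x$-derivatives on the other. This gains $\kum{x}^{-m}\kum{\x}^{-m}$ overall, because $\dsym_j-z\in\Sigma^{0,-1}$, $\bb_{j,l}\in\Sigma^{l,1+l}$, and the Planck function of $g_0$ is $(\kum{x}\kum{\x})^{-1}$. Hence $\cb_{j,k}\in\Sigma^{k,k}$, and multiplying by $\eb_j\in\Sigma^{0,1}$ gives $\Sigma^{k,1+k}$. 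The powers of $|\im z|^{-1}$ add up to a finite $M_k$. Holomorphy in $z$ is inherited from $\eb_j$, since all operations are products and derivatives in $(x,\x)$.

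The remainder $\hbar^N\rb_{j,N}$ collects two kinds of terms. The first are the Moyal terms of total order $\ge N$ among products $\bb_{j,l}\#(\dsym_j-z)$ with $l<N$. The second are the Moyal remainders from (Q1) truncated at $N-l$. By (Q1) these lie in $\hbar^{N}\Sigma^{l+(N-l),\,1+l-1+(N-l)}=\hbar^N\Sigma^{N,N}$, with seminorms controlled by finitely many seminorms of $\bb_{j,l}$, hence by a power of $|\im z|^{-1}$.

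To pass to operators in \eqref{ZCMM2026d:parametrices}, we quantise. Since $\dsym_j-z\in\Sigma^{0,-1}$ and $\bb_j^{(N)}\in\Sigma^{0,1}$, (Q1) applies and gives $\Opw(\bb_j^{(N)})(\Dl_j-z)=1+\hbar^N\Opw(\rb_{j,N})$ on $H^1$. Multiplying on the right by $\Rb_j(z)$, which is defined since $\im z\ne0$ and $\Dl_j$ is self-adjoint, yields the first identity. The right parametrix $\bt_j^{(N)}$ is built the same way, solving $(\dsym_j-z)\bt_{j,k}=-\tilde\cb_{j,k}$ by $\bt_{j,k}=-\eb_j\tilde\cb_{j,k}$, and gives the second identity.

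The main obstacle is the induction's symbol-class bookkeeping. Because $\dsym_j$ grows in $\x$, one must check that $\x$-derivatives landing on $\dsym_j$ still produce the full $\kum{\x}^{-1}$ gain per order. This works because $\partial_\x\dsym_j$ is constant and is paired with an $x$-derivative of $\bb_{j,l}$, which gains $\kum{x}^{-1}$ and keeps $\bb_{j,l}$'s $\kum{\x}$-order. Meanwhile $\partial_x\dsym_j=\partial_x\Pot_j$ gains $\kum{x}^{-1}$ and is paired with $\partial_\x\bb_{j,l}$, which gains $\kum{\x}^{-1}$. Each order therefore nets exactly one unit in both $x$ and $\x$, as required.
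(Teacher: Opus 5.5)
Your proposal is correct and follows essentially the same route as the paper. The shared steps are the recursion $\bb_{j,0}=\eb_j$, $\bb_{j,k}=-\big(\sum_{l=1}^{k}\cb_l(\bb_{j,k-l},\dsym_j-z)\big)\eb_j$ and its mirror $\bt_{j,k}=-\eb_j\tilde{\cb}_{j,k}$; the count that $m$ Moyal derivatives on $\Sigma^{l,1+l}\times\Sigma^{0,-1}$ land in $\Sigma^{k,k}$; the (Q1) remainder in $\hbar^N\Sigma^{N,N}$; and quantising, then multiplying by $\Rb_j$. Your extra observation that $\dsym_j$ is affine in $\x$ is a harmless refinement the paper does not need, because the general class count already suffices.
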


\begin{proof}
Since $\dsym _j - z \in \Sigma ^{0,-1}$ and $\eb _j \in \Sigma ^{0,1}$, the standard
recursion
$$
\bb _{j,0}=\eb _j , \qquad
\bb _{j,k} = -\Big ( \sum _{l=1}^k \cb _l \big ( \bb _{j,k-l}, \dsym _j - z\big )\Big )
\eb _j \quad (k\ge 1),
$$
where $\cb _l$ denotes the $l$-th term in the Moyal expansion (Q1) with $\hbar ^l$
factored out, produces symbols with $\cb _l (\bb _{j,k-l},\dsym _j -z) \in \Sigma ^{k,k}$,
hence $\bb _{j,k}\in \Sigma ^{k,1+k}$; the bounds in $|\im z|$ follow inductively from
Lemma~\ref{ZCMM2026d:resolvent_symbol}. With this choice all terms of order $\hbar ^k$,
$1\le k<N$, cancel and the remainder lies in $\hbar ^N \Sigma ^{N,N}$ by the remainder
statement in (Q1). The left parametrix is constructed in the same way. Quantising and
multiplying by $\Rb _j$ yields \eqref{ZCMM2026d:parametrices}. Holomorphy in $z$ is clear,
since each $\bb _{j,k}$ is a finite sum of products of $\eb _j$'s and derivatives of
$\dsym _j$, hence rational in $z$ with poles only in $\sm \dsym _j(x,\x )\subset \R $.
\end{proof}

\begin{lemma}
Let $\cb (x,\x ,z)$ be a symbol which, for each $(x,\x )$ with $\kum{\x }\ge \L $, extends
holomorphically in $z$ to a neighbourhood of $\supp \tilde{g}$, with $|\cb |$ locally
bounded there. Then
$$
\s (x,\x ) := \frac{1}{\pi }\int \limits _\C \bar \partial \tilde{g}(z)\,
\cb (x,\x ,z)\, \dLeb (dz) = 0 \qquad \text{whenever }\kum{\x }\ge \L .
$$
\label{ZCMM2026d:stokes_lemma}
\end{lemma}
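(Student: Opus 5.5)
The plan is to show that, for fixed $(x,\x)$ with $\kum{\x}\ge\L$, the integrand $\bar\partial\tilde g(z)\,\cb(x,\x,z)$ is an exact $\bar\partial$-derivative of a compactly supported function, and then apply Stokes' theorem (or simply integration by parts in $\C\simeq\R^2$). Since $\cb(x,\x,\cdot)$ is holomorphic on a neighbourhood $\mathcal N$ of $\supp\tilde g$, on $\mathcal N$ we have $\bar\partial\big(\tilde g\,\cb\big)=(\bar\partial\tilde g)\,\cb+\tilde g\,\bar\partial\cb=(\bar\partial\tilde g)\,\cb$. The function $\tilde g\,\cb$, extended by zero outside $\supp\tilde g$, is then smooth (indeed $C^1$ suffices) and compactly supported on $\C$, because it vanishes on the open set $\mathcal N\setminus\supp\tilde g$ and hence near $\pd\,\supp\tilde g$.

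The key step is then the elementary identity $\int_\C\bar\partial h\,\dLeb(dz)=0$ for $h\in C^1_0(\C)$. I would verify it by writing $\bar\partial=\tfrac12(\pd_{\re z}+i\pd_{\im z})$ and integrating each partial derivative in its own variable first, by Fubini and the fundamental theorem of calculus; compact support kills both boundary terms. Applying this with $h=\tilde g\,\cb(x,\x,\cdot)$ gives $\s(x,\x)=\pi^{-1}\int_\C\bar\partial h\,\dLeb(dz)=0$. Local boundedness of $|\cb|$ on the neighbourhood ensures that the integral converges absolutely and that Fubini is legitimate.

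The only point needing care, and the place I expect the main obstacle, is the regularity of $h$ across $\R$. The hypothesis gives holomorphy of $\cb$ in $z$ on a full neighbourhood of $\supp\tilde g$, including the real axis, when $\kum{\x}\ge\L$. This holds for the resolvent-type symbols of Lemma~\ref{ZCMM2026d:resolvent_symbol} and Lemma~\ref{ZCMM2026d:parametrix_lemma}, because the spectrum of $\dsym_j(x,\x)$ lies outside $[-R,R]$ there. Consequently no singular contribution arises from $\im z=0$, and $h$ is genuinely $C^\infty_0$. For matrix-valued $\cb$ the argument is applied entrywise.
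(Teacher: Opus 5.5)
Your proposal is correct and is essentially the paper's argument. The paper likewise notes that $\bar\partial(\tilde g\,\cb)=(\bar\partial\tilde g)\,\cb$ for $\kum{\x}\ge\L$ and that $\tilde g\,\cb$ is compactly supported and $C^1$ in $z$, and then concludes by Stokes' theorem; your Fubini and fundamental-theorem-of-calculus check that $\int_\C\bar\partial h\,\dLeb(dz)=0$ just spells out that final step.
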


\begin{proof}
For such $(x,\x )$ we have $\bar \partial (\tilde{g}\cb ) = (\bar \partial \tilde{g})\cb $,
and $\tilde{g}\cb $ is compactly supported and $C^1$ in $z$, so that
$\int _\C \bar \partial (\tilde{g}\cb )\, \dLeb (dz) = 0$ by Stokes' theorem.
\end{proof}

Lemma~\ref{ZCMM2026d:stokes_lemma} is the mechanism of the proof: the Dirac resolvent is a
symbol of order $-1$ only, and no product of two of them is integrable in
$\x \in \R ^3$; but after integration against $\bar \partial \tilde{g}$ the resulting
symbol is \emph{compactly supported} in $\x $, and integrability is free. It is the
counterpart, at the level of the full symbol, of the elementary fact recorded in
Lemma~\ref{ZCMM2026d:symbol_eigenvalues} that $g(\dsym _j)$ vanishes for $|\x |>R$.

\begin{proof}[Proof of Theorem~\ref{ZCMM2026d:weyl_trace_formula}]
Fix $N=4$ and let $M$ be chosen at the end of Step 3.
\newline
\noindent
\emph{Step 1: the basic identity.} By the second resolvent identity
$\Rb _2 - \Rb _1 = -\Rb _2 \Wpot \Rb _1$, so that \eqref{ZCMM2026d:HS} gives
\begin{equation}
g(\Dl _2) - g(\Dl _1) = \frac{1}{\pi }\int \limits _\C \bar \partial \tilde{g}(z)\,
\Rb _2(z)\Wpot \Rb _1(z)\, \dLeb (dz),
\label{ZCMM2026d:basic_identity}
\end{equation}
the integral converging in operator norm, because
$$
\| \Rb _2 \Wpot \Rb _1 \| \le \| \Wpot \| _\infty |\im z|^{-2}.
$$
The same computation for matrices, together with
$\eb _2 - \eb _1 = -\eb _2 \Wpot \eb _1$, gives the classical counterpart
\begin{equation}
\frac{1}{\pi }\int \bar \partial \tilde{g}\, \eb _2 \Wpot \eb _1 \, \dLeb (dz)
= \frac{1}{\pi }\int \bar \partial \tilde{g}\, (\eb _1 - \eb _2)\, \dLeb (dz)
= g(\dsym _2) - g(\dsym _1).
\label{ZCMM2026d:classical_identity}
\end{equation}
\noindent
\emph{Step 2: splitting with the two parametrices.} 
By Lemma~\ref{ZCMM2026d:parametrix_lemma}, insert the right parametrix
$\Rb_2=\Opw(\bt_2^{(N)})-\hbar^N\Rb_2\Opw(\tilde{\rb}_{2,N})$ for $\Rb_2$, and the
\emph{left} parametrix
$\Rb_1=\Opw(\bb_1^{(N)})-\hbar^N\Opw(\rb_{1,N})\Rb_1$ for $\Rb_1$. 
By (Q1),
\begin{align}
\Rb _2 \Wpot \Rb _1 &= \Opw \big ( \bt _2^{(N)}\# \Wpot \# \bb _1^{(N)}\big )
- \hbar ^N \Opw (\bt _2^{(N)})\Opw \big ( \Wpot \# \rb _{1,N}\big )\Rb _1 \nonumber \\
&\phantom{=} - \hbar ^N \Rb _2 \Opw \big ( \tilde{\rb }_{2,N}\# \Wpot \big )
\Opw (\bb _1^{(N)}) \nonumber \\
&\phantom{=} + \hbar ^{2N}\Rb _2 \Opw \big ( \tilde{\rb }_{2,N}\# \Wpot \# \rb _{1,N}
\big )\Rb _1 .
\label{ZCMM2026d:four_terms}
\end{align}
The point of using one left and one right parametrix is that in the second term the
smoothing factor $\rb _{1,N}$ sits immediately to the \emph{right} of $\Wpot $, and in the
third the factor $\tilde{\rb }_{2,N}$ immediately to its \emph{left}; in both cases the
two compose, by (Q1), into a symbol in $\Sigma ^{\d +N,N}$, which is of trace class by
(Q3) since $\d >3$ and $N=4>3$. Had the same parametrix been used on both sides, one of
the remainder terms would have had the decaying factor $\Wpot $ separated from every
smoothing factor by a bare resolvent, and no trace-ideal estimate would have been
available.
\newline
\noindent
\emph{Step 3: the terms.} By (Q1),
$$
\bt _2^{(N)}\# \Wpot \# \bb _1^{(N)} = \cb _0(z) + \hbar \cb _1(z) + \hbar ^2 \varrho_2  (z),
$$
where $\cb _0 = \eb _2 \Wpot \eb _1 \in \Sigma ^{\d ,2}$,
$\cb _1 \in \Sigma ^{\d +1,3}$ and $\varrho_2 \in \Sigma ^{\d +2,4}$, all with seminorms
$\Oc (|\im z|^{-M'})$, and $\cb _0,\cb _1$ holomorphic in $z$ wherever $\eb _1,\eb _2$
are. Setting
$$
\s _k := \frac{1}{\pi }\int \bar \partial \tilde{g}\, \cb _k \, \dLeb (dz) \ (k=0,1),
\qquad \hat{\varrho}_{2} := \frac{1}{\pi }\int \bar \partial \tilde{g}\, \varrho_2 \, \dLeb (dz),
$$
which converge absolutely in every seminorm provided $M \ge M'+2$, we conclude from
Lemma~\ref{ZCMM2026d:stokes_lemma} that $\s _0 = \s _1 = 0$ for $\kum{\x }\ge \L $; thus
$\s _0,\s _1 \in \Sigma ^{\d ,\infty }$ are compactly supported in $\x $, while
$ \hat{\varrho}_{2} \in \Sigma ^{\d +2,4}$. All three are admissible for (Q3), because
$\int \kum{x}^{-\d }\1 _{\{ \kum{\x }\le \L \}}\, dx \, d\x < \infty $ (here $\d >3$ is
used) and $\int \kum{x}^{-\d -2}\kum{\x }^{-4}\, dx\, d\x < \infty $. Hence, by (Q4), the
first term of \eqref{ZCMM2026d:four_terms} contributes
$\Opw (\s _0) + \hbar \Opw (\s _1) + \hbar ^2 \Opw ( \hat{\varrho}_{2} )$, all of trace class with
trace norms $\Oc (C(g)\hbar ^{-3})$.

We estimate the remaining terms separately. By (Q1),
\[
 \Wpot\#\rb_{1,N},\quad \tilde{\rb}_{2,N}\#\Wpot
 \in \Sigma^{\d+N,N},
 \qquad
 \tilde{\rb}_{2,N}\#\Wpot\#\rb_{1,N}
 \in \Sigma^{\d+2N,2N},
\]
with seminorms bounded by a fixed power $|\im z|^{-M''}$. Since $N=4$ and $\d>3$, all
three symbols are trace-class symbols by (Q3). Thus
\[
 \|\Opw(\Wpot\#\rb_{1,N})\|_{\Sone}
 +\|\Opw(\tilde{\rb}_{2,N}\#\Wpot)\|_{\Sone}
 \le C\hbar^{-3}|\im z|^{-M''},
\]
and
\[
 \|\Opw(\tilde{\rb}_{2,N}\#\Wpot\#\rb_{1,N})\|_{\Sone}
 \le C\hbar^{-3}|\im z|^{-M''}.
\]
The other factors are bounded. More precisely,
\[
 \|\Rb_j(z)\|\le |\im z|^{-1},
\]
and, by (Q2), after increasing $M''$ if necessary,
\[
 \|\Opw(\bb_j^{(N)})\|+\|\Opw(\bt_j^{(N)})\|
 \le C|\im z|^{-M''}.
\]
Since $\Sone$ is a two-sided ideal, the second and third terms of
\eqref{ZCMM2026d:four_terms}, including their prefactor $\hbar^N$, have trace norm
$\Oc(\hbar^{N-3}|\im z|^{-K})$ for some fixed $K$. The fourth term, including its
prefactor $\hbar^{2N}$, has trace norm $\Oc(\hbar^{2N-3}|\im z|^{-K})$.
Choose the order of the almost-analytic extension so large that
\[
 \int_{\C}|\bar\partial\tilde g(z)|\,|\im z|^{-K}\,\dLeb(dz)<\infty.
\]
After integration, the three remainder terms therefore contribute
$\Oc(C(g)\hbar^{N-3})=\Oc(C(g)\hbar)$ in trace norm. Together with the first term this
proves $g(\Dl_2)-g(\Dl_1)\in\Sone$ and
\[
 \|g(\Dl_2)-g(\Dl_1)\|_{\Sone}\le C(g)\hbar^{-3}.
\]
\noindent
\emph{Step 4: taking the relative trace.} By \eqref{ZCMM2026d:trace_identity},
\[
\Tr\big(g(\Dl_2)-g(\Dl_1)\big)
=(2\pi\hbar)^{-3}\int_{\R^6}\tr\s_0(x,\x)\,dx\,d\x
+\Oc\big(C(g)\hbar^{-2}\big).
\]
Indeed, the term $\hbar\Opw(\s_1)$ contributes $\Oc(C(g)\hbar^{-2})$, the term
$\hbar^2\Opw( \hat{\varrho}_{2})$ contributes $\Oc(C(g)\hbar^{-1})$, and the integrated remainder
terms from Step~3 contribute only $\Oc(C(g)\hbar)$. By the definition of $\s_0$ and the
matrix identity \eqref{ZCMM2026d:classical_identity},
\[
 \s_0(x,\x)=g(\dsym_2(x,\x))-g(\dsym_1(x,\x))
\]
pointwise. Hence
\[
\Tr\big(g(\Dl_2)-g(\Dl_1)\big)
=(2\pi\hbar)^{-3}\int_{\R^6}
\tr\big(g(\dsym_2)-g(\dsym_1)\big)\,dx\,d\x
+\Oc\big(C(g)\hbar^{-2}\big).
\]
This is the asserted relative trace formula. The constant $C(g)$ depends only on finitely
many seminorms of $g$ (through the chosen almost-analytic extension) and on a bound for
$\supp g$. In particular, no separate quantities $\Tr g(\Dl_j)$ are used or required. 
\end{proof}

\section{The comparison potential}
\label{ZCMM2026d:appendix_comparison}
In this appendix we construct the potential $v_1$ used in the proof of
Theorem~\ref{ZCMM2026d:thm_abs}. The construction is a modification of the one in
\cite[proof of Theorem 4.2]{jkmm14}: the mollifier has \emph{constant} width and
vanishing moments up to order three, which is what produces the required decay of the
difference $v_2-v_1$, and the estimate in the sector is obtained by a contour deformation
which stays inside the cone in which $v_2$ is analytic.

Throughout this appendix we assume directly that $v_2\in C^\infty(\R^3;\R)$ is bounded, that it 
extends holomorphically to an exterior cone $C_{\ve,R_2}$ for some $\ve\in(0,\frac12)$ and 
$R_2\ge0$, and that for some $\r>0$ $|v_2(z)|\le C\kum{z}^{-\r}$ there. We also assume 
$\inf_{x\in\R^3}(1+v_2(x))>0$ and $\sup_{x\in\R^3}v_2(x)>0$. These are precisely the 
individual hypotheses needed on $v_2$ before the comparison potential has been constructed; 
Assumption $(\Ab_\d)$ will only be invoked below for the pair $(v_1,v_2)$. 

Put $\ve':=\ve/4$. First split off the exterior-vanishing case. If $v_2(x)=0$ for all $|x|\ge R_*$ for 
some $R_*$, then uniqueness of the holomorphic continuation implies that the exterior analytic 
continuation also vanishes after increasing the exterior radius. In that case we take $v_1\equiv0$; the conclusions 
of Proposition~\ref{ZCMM2026d:comparison_potential} below are immediate, with any $\d>3$, and no 
mollification is needed. Henceforth suppose that $\ve_0(R):=\sup_{|y|\ge R}|v_2(y)|>0$ for every 
sufficiently large $R$. Choose $R\ge\max(R_2,1)$ so large that $0<\ve_0:=\ve_0(R)<e^{-2}$, and 
put $R':=8R$, $\L_0:=(\log(1/\ve_0))^{1/2}$ and $\l:=\ve'R'/\L_0$. 

We use the explicitly normalised entire mollifier
\[
Q(z):=\frac{1}{2(2\pi)^{3/2}}\bigl(5-z\cdot z\bigr)e^{-z\cdot z/2},
\qquad z\cdot z:=\sum_{j=1}^3 z_j^2,
\]
and set $Q_\l(z):=\l^{-3}Q(z/\l)$. On $\R^3$, $Q$ is real and even, $\int_{\R^3}Q(u)\,du=1$, and
$\int_{\R^3}u^\a Q(u)\,du=0$ for $1\le|\a|\le3$. Indeed, the odd moments vanish by parity, the mixed quadratic moments vanish by symmetry, and the coefficient $5$ is chosen so that $\int u_1^2Q(u)\,du=0$. The use of $z\cdot z$, rather than $|z|^2$, makes $Q$ entire and gives
\[
|Q(p+iq)|\le C\bigl(1+|p|^2+|q|^2\bigr)
\exp\!\left(\frac{|q|^2-|p|^2}{2}\right),\qquad p,q\in\R^3.
\]
With $\h \in \ccs (\R ^3)$, $\h = 1$ on $B(0,1)$, $\supp \h \subset B(0,2)$, put
$$
v_{2,R} := \big ( 1 - \h (\cdot /R)\big ) v_2 , \qquad
v_1(z) := \int \limits _{\R ^3} Q_\l (z-y)\, v_{2,R}(y)\, dy , \quad z\in \C ^3 .
$$
For $z=x+ib$ and $y\in\R^3$, the preceding estimate on $Q$ gives a Gaussian factor 
$\exp(-( |x-y|^2-|b|^2)/(2\l^2))$ times a polynomial. Hence the defining integral and all of its 
$z$-derivatives converge locally uniformly in $z$, so $v_1$ is entire. On $\R^3$ it is real valued 
and, by differentiation and integration by parts, $\partial^\a v_1=Q_\l\ast\partial^\a w$. Note that 
$v_{2,R}=0$ on $|y|\leq R$, $v_{2,R}=v_2$ on $|y|\ge2R$, $\| v_{2,R} \|_\infty\le\ve_0$ and, by 
Lemma~\ref{ZCMM2026d:cauchy_estimates} together with the fixed cut-off, $|\partial^\a v_{2,R} (y)|\leq C_{\a,R}\kum{y}^{-\r-|\a|}$.

\begin{lemma}
$\| v_1 \| _{L^\infty (\R ^3)} \le C\ve _0$, and for every $\a $ and $|x| \ge 2R$,
$$
\big |\partial ^\a (v_2-v_1)(x)\big | \le C_\a \Big ( \l ^4 \kum{x}^{-\r -4-|\a |}
+ e^{-|x|^2/(32\l ^2)}\Big ) \le C_\a ' \kum{x}^{-\r -4 -|\a |} .
$$
\label{ZCMM2026d:comparison_real}
\end{lemma}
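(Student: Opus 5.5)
The first estimate is immediate: $v_1 = Q_\l \ast v_{2,R}$ on $\R^3$ and $\|v_{2,R}\|_\infty \le \ve_0$, so $\|v_1\|_\infty \le \|Q\|_{L^1}\,\ve_0$. Since $Q$ is a fixed Schwartz function, the constant is independent of $R$ and $\l$.

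For the derivative estimate, use $\partial^\a v_1 = Q_\l\ast\partial^\a v_{2,R}$ and, for $|x|\ge 2R$, $\partial^\a v_2(x)=\partial^\a v_{2,R}(x)$. Writing $w:=\partial^\a v_{2,R}$ and using $\int Q=1$,
\[
\partial^\a(v_2-v_1)(x)=\int_{\R^3} Q_\l(u)\bigl(w(x)-w(x-u)\bigr)\,du .
\]
Split the $u$-integral at $|u|\le |x|/4$ versus $|u|>|x|/4$.

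On the near region, Taylor expand $w(x-u)$ about $x$ to order three, with integral remainder of order four. The terms of order $1\le k\le 3$ integrate against $Q_\l(u)u^\b$ over the whole of $\R^3$ to zero by the vanishing moments. Their integrals over $|u|>|x|/4$ instead must be put back, and these are bounded by $C\kum{x}^{-\r-|\a|-k}\int_{|u|>|x|/4}|u|^k|Q_\l(u)|\,du \lesssim e^{-|x|^2/(32\l^2)}$, using the Gaussian factor $e^{-|u|^2/(2\l^2)}$ with room to absorb the polynomial. The fourth-order remainder involves $\partial^4 w$ on the segment $[x-u,x]$, where $|x-tu|\ge \tfrac34|x|$. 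By Lemma~\ref{ZCMM2026d:cauchy_estimates} applied with the fixed cutoff, $|\partial^{4}w|\le C\kum{x}^{-\r-4-|\a|}$ there. Integrating $|u|^4|Q_\l(u)|$ gives $\l^4\int|u|^4|Q|$, which yields the term $\l^4\kum{x}^{-\r-4-|\a|}$.

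On the far region, bound $|w(x)|+|w(x-u)|\le 2\|w\|_\infty$. Then the bound follows from $\int_{|u|>|x|/4}|Q_\l(u)|\,du\le Ce^{-|x|^2/(32\l^2)}$, since $|u|^2/(2\l^2)\ge |x|^2/(32\l^2)$ there, with polynomial factors absorbed after slightly adjusting constants. The boundedness of $w$ follows from the symbol bounds on $v_{2,R}$.

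The second inequality holds because $\l$ is fixed once $R$ is, and $e^{-|x|^2/(32\l^2)}$ decays faster than any power of $\kum{x}$, so it is dominated by $C\kum{x}^{-\r-4-|\a|}$ with a constant depending on $\l$ and $\a$.

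I expect the main obstacle to be bookkeeping rather than a genuine difficulty: the truncated moments on $|u|\le |x|/4$ do not vanish exactly. The fix is the re-addition of the tail contributions described above, which is harmless thanks to the Gaussian decay. One must also make sure that the exponent $1/32$ survives after absorbing the polynomial weights in $|Q|$; if it does not, I would split at $|u|\le|x|/2$ instead, which leaves more slack.
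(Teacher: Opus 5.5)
Your proposal is correct and follows the paper's proof essentially step for step: the bound $\|Q_\l\ast v_{2,R}\|_\infty\le\|Q\|_{L^1}\ve_0$, the split of $\int Q_\l(u)\,[w(x-u)-w(x)]\,du$ at $|u|=|x|/4$, the fourth-order Taylor expansion with the vanishing moments (the truncation tails being Gaussian), the bound on $\nabla^4 w$ where $|x-s|\ge\frac34|x|$, and the sup-norm estimate on the far region. Your caution about the exponent is justified, since $\int_{|u|>|x|/4}|Q_\l(u)|\,du$ actually carries a factor of order $(1+|x|/\l)^3$ that the paper absorbs silently; splitting at $|x|/2$, as you suggest, removes it, and in any case the later application uses only the second inequality, with $\l$ fixed.
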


\begin{proof}
The first assertion is
$\| Q_\l \ast w\| _\infty \leq \| Q\| _{L^1}\|  v_{2,R}\| _\infty $. For the second, note that
$v_2=v_{2,R}$ near $x$ when $|x|\ge 2R$, so it suffices to estimate
$W - Q_\l \ast W$ with $W := \partial ^\a v_{2,R}$. Split
\begin{align*}
\big ( Q_\l \ast W - W\big )(x) &= \int \limits _{|u|\le |x|/4}Q_\l (u)
\big [ W(x-u)-W(x)\big ]\, du \\
&\phantom{=} + \int \limits _{|u|>|x|/4}Q_\l (u)\big [ W(x-u)-W(x)\big ] \, du .
\end{align*}
In the first integral expand $W$ by Taylor to order four about $x$; the terms of order
one, two and three integrate to zero by the moment conditions on $Q$ (up to a tail which
is absorbed in the second integral), and the remainder is bounded by
$C|u|^4 \sup _{|s|\le |x|/4}|\nabla ^4 W(x-s)| \le C|u|^4 \kum{x}^{-\r -4-|\a |}$,
since $|x-s| \ge \frac34 |x|$; multiplying by $Q_\l $ and integrating gives
$C\l ^4 \kum{x}^{-\r -4-|\a |}$. The second integral is bounded by
$2\| W\| _\infty \int _{|u|>|x|/4}|Q_\l (u)|\, du \le Ce^{-|x|^2/(32\l ^2)}$.
\end{proof}

\begin{lemma}
There are $C,c>0$ such that for all $z=x+ib$ with $|x|\ge R'$ and $|b|\le\ve'|x|$,
\begin{equation}
|v_1(z)-v_2(z)| \le C\l ^4\kum{z}^{-\r-4}+Ce^{-c|x|^2/\l^2}.
\label{ZCMM2026d:sector_estimate}
\end{equation}
Moreover, for $|x|\le R'$ and $|b|\le\ve'|x|$ one has
\[
|v_1(z)|\le C\Bigl(1+\frac{|b|^2}{\l^2}\Bigr)
 e^{|b|^2/(2\l^2)}\ve_0
\le C(1+\L_0^2)\ve_0^{1/2}.
\]
Consequently, after increasing $C$ if necessary,
\begin{equation}
\sup_{z\in C_{\ve',0}}|v_1(z)|\le
\eta_R:=C\Bigl[(1+\L_0^2)\ve_0^{1/2}+(R')^{-\r}
+\ve_0^{\,c/\ve'^2}\Bigr]\longrightarrow0
\qquad(R\to\infty).
\label{ZCMM2026d:comparison_smallness}
\end{equation}
\label{ZCMM2026d:comparison_sector}
\end{lemma}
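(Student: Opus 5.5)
The idea is to move the integration contour off $\R^3$ so that $Q_\l$ is evaluated only at real arguments, and then rerun the Taylor/moment argument of Lemma~\ref{ZCMM2026d:comparison_real} inside the sector. Let $z=x+ib$ with $|x|\ge R'$ and $|b|\le\ve'|x|$. Substituting $y=z-u$ gives, at least formally,
\[
v_1(z)=\int_{u\in\R^3}Q_\l(u)\,v_{2,R}(z-u)\,du .
\]
This uses the holomorphic extension of $v_{2,R}$ to the points $z-u$, and is obtained by deforming $y\in\R^3$ to $y\in ib+\R^3$ in Cauchy's theorem. The cut-off factor $1-\h(\cdot/R)$ is not holomorphic, so I would first split $v_{2,R}=v_2-\h(\cdot/R)v_2$. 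Then:
\begin{itemize}
\item The compactly supported piece $\h(\cdot/R)v_2$, supported in $|y|\le2R\le R'/4$, is handled without deformation. Its convolution with $Q_\l$ at $z$ is bounded using the bound on $|Q|$. The Gaussian factor $\exp\bigl(-(|x-y|^2-|b|^2)/(2\l^2)\bigr)$, with $|x-y|\ge\frac34|x|$ and $|b|\le\ve'|x|$, gives $Ce^{-c|x|^2/\l^2}$.
\item For the piece with $v_2$ itself, the deformation $y\mapsto y+isb$, $s\in[0,1]$, takes place only where $|y|\ge R'/8$ or so. A point $y+isb$ with $|y-x|\le|x|/2$ lies in $C_{\ve,R_2}$ because $|b|\le\ve'|x|\le\ve|y|/2$. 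Far from $x$, the points $y+isb$ need not lie in the cone. So I would cut the integral with a smooth partition at $|y-x|\approx|x|/2$ and deform only the near part. The far part and the side contributions of the deformation are bounded by $e^{-c|x|^2/\l^2}$ times polynomial factors, again by the Gaussian bound on $Q$.
\end{itemize}

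After deformation the near part reads $\int Q_\l(u)\,v_2(z-u)\,du$ over $|u|\lesssim|x|/2$. I would Taylor-expand $v_2(z-u)$ about $z$ to fourth order and kill the orders $1,2,3$ by the moment conditions. Truncating the moments costs only Gaussian tails. The fourth-order remainder is bounded by the complex Cauchy estimates of Lemma~\ref{ZCMM2026d:cauchy_estimates}, namely $|\nabla^4v_2|\le C\kum{z}^{-\r-4}$ on $C_{\ve/2,2R_0+1}$, which contains the segment from $z$ to $z-u$. This gives $C\l^4\kum{z}^{-\r-4}$. The zeroth-order term returns $v_2(z)$, which proves \eqref{ZCMM2026d:sector_estimate}.

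For $|x|\le R'$, I would estimate directly from the defining integral. The bound $|v_{2,R}|\le\ve_0$ and the bound on $|Q|$ give the factor $(1+|b|^2/\l^2)e^{|b|^2/(2\l^2)}$. Since $|b|\le\ve'R'$, we have $|b|/\l\le\L_0$, so $e^{|b|^2/(2\l^2)}\le\ve_0^{-1/2}$, which yields $C(1+\L_0^2)\ve_0^{1/2}$. For \eqref{ZCMM2026d:comparison_smallness}, on $|x|\ge R'$ I would combine \eqref{ZCMM2026d:sector_estimate} with $|v_2(z)|\le C(R')^{-\r}$. In the term $\l^4\kum{z}^{-\r-4}\le \l^4 (R')^{-4}(R')^{-\r}$, the factor $\l^4(R')^{-4}=\ve'^4/\L_0^4$ is bounded. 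Also $e^{-c|x|^2/\l^2}\le e^{-c\L_0^2/\ve'^2}=\ve_0^{c/\ve'^2}$. All three terms tend to zero as $R\to\infty$, because $\ve_0(R)\to0$.

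The main obstacle is the contour deformation: it must stay inside the cone where $v_2$ is holomorphic while $Q_\l$ is evaluated at complex points. This forces the localisation near $x$ and careful tracking that the Gaussian gain $|x|^2/\l^2$ beats the loss $|b|^2/\l^2\le\ve'^2|x|^2/\l^2$. The choice $\ve'=\ve/4$ is what makes this work.
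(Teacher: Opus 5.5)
Your proposal is correct and follows essentially the paper's argument. You localise a contour deformation near $x$ so that $Q_\lambda$ is evaluated only at real arguments, run the Taylor/moment argument with the complex Cauchy estimates, bound everything else by the Gaussian factor, and estimate the region $|x|\le R'$ directly. The only difference is bookkeeping: the paper uses the single variable-height contour $t+i\psi(t)b$, with $\psi$ supported in $|t-x|<|x|/2$, which keeps the integrand holomorphic, so it needs neither your splitting of $v_{2,R}$ nor lateral terms; your constant shift by $ib$ of a localised piece requires a sharp cut at $|y-x|=|x|/2$ (a smooth partition breaks holomorphy unless you track its $\bar\partial$-term) and produces side contributions, which are Gaussian-small as you say.
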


\begin{proof}
\emph{The far part.} Let $\tilde{\p }\in \cs ([0,\infty );[0,1])$ with $\tilde{\p }=1$ on
$[0,\frac14]$ and $\tilde{\p }=0$ on $[\frac12,\infty )$, and put
$\p (t) := \tilde{\p }(|t-x|/|x|)$, so that $|\nabla \p | \le C/|x|$. Consider the
contours
$$
\G ^\s := \big \{ y = t + i\s \p (t) b \, : \, t \in \R ^3 \big \}, \qquad \s \in [0,1],
$$
so that $\G ^0 = \R ^3$. If $\p (t)\ne 0$ then $|t-x|<|x|/2$, hence
$|t| > \frac12 |x| \ge 4R \ge 2R$ and $|t|>R_2$, so that $v_{2,R}=v_2$ on the swept region and
$v_2$ is holomorphic there; moreover
$|\im y| \le |b| \le \ve '|x| \le 2\ve '|t| = \frac{\ve }{2}|t| < \ve |\re y|$, so the
swept region lies in $C_{\ve ,R_2}$. The integrand
$Q_\l (z-y)v_{2,R}(y)\, dy_1 \wedge dy_2\wedge dy_3$ is therefore a closed holomorphic form
there, with Gaussian decay in $|t|$, and Stokes' theorem gives
$v_1(z) = \int _{\G ^1}Q_\l (z-y)v_{2,R}(y)\, dy$.

On the part of $\G ^1$ where $|t-x| \ge |x|/4$ we have $|\im (z-y)| = |b||1-\p (t)|
\le \ve '|x|$ and $|\re (z-y)| = |x-t| \ge |x|/4$, so that
\begin{align*}
\big | Q\big ( (z-y)/\l \big )\big | &\le C\Big ( 1 + \frac{|z-y|^2}{\l ^2}\Big )
\exp \Big ( \frac{|\im (z-y)|^2 - |\re (z-y)|^2}{2\l ^2}\Big ) \\
&\le C' e^{-c|x-t|^2/\l ^2}e^{-c|x|^2/\l ^2},
\end{align*}
because $\ve '^2|x|^2 - \frac{1}{16}|x|^2 \le -\frac{1}{32}|x|^2$ for
$\ve ' \le \frac18$. The Jacobian of the contour is
$\det (\Idb + ib \ot \nabla \p ) = 1 + \Oc (\ve ')$.
The bound $\|v_{2,R}\|_{L^\infty(\R^3)}\le\ve_0$ is only a real-axis bound and should not be used 
after the contour has been moved. On the swept region, however, $v_{2,R}=v_2$ and the exterior analytic 
decay gives $|v_{2,R}(y)|=|v_2(y)|\le C\kum{t}^{-\r}\leq C$. Together with the Gaussian estimate 
(whose polynomial factor is absorbed by a slight weakening of the Gaussian exponent), 
this shows that the part $|t-x|\ge|x|/4$ contributes $\Oc(e^{-c|x|^2/\l^2})$. 

On the part where $|t-x| \le |x|/4$ we have $\p \equiv 1$, the Jacobian equals one, and
$z - y = x-t$ is \emph{real}: the Gaussian growth has disappeared. Hence
$$
v_1(z) = \int \limits _{|t-x|\le |x|/4}Q_\l (x-t)F(t)\, dt
+ \Oc \big ( e^{-c|x|^2/\l ^2}\big ), \qquad F(t):= v_2(t+ib).
$$
By Lemma~\ref{ZCMM2026d:cauchy_estimates}, $|\nabla ^4 F(t)| \le C\kum{z}^{-\r -4}$ for
$|t-x| \le |x|/4$. Let $P_3$ be the Taylor polynomial of $F$ at $x$ of degree three. Then 
$|F(t)-P_3(t)|\le C|t-x|^4\kum{z}^{-\r-4}$ on $|t-x|\le|x|/4$. The full-space moments of 
$Q_\l$ give $\int_{\R^3}Q_\l(x-t)P_3(t)\,dt=P_3(x)=F(x)$. Replacing the full-space integral by $|t-x|\le|x|/4$ produces only a Gaussian tail, because $|x|/\l\gtrsim\L_0/\ve'$. Hence
\[
\left|\int_{|t-x|\le|x|/4}Q_\l(x-t)F(t)\,dt-F(x)\right|
\le C\l^4\kum{z}^{-\r-4}+Ce^{-c|x|^2/\l^2},
\]
which is \eqref{ZCMM2026d:sector_estimate}, since $F(x)=v_2(z)$. 

\emph{The bounded part.} Write $z=x+ib$ and change variables $p=(x-y)/\l$. The entire-kernel estimate above gives
\begin{align*}
|v_1(z)|
&\le C\ve_0 e^{|b|^2/(2\l^2)}
\int_{\R^3}\Bigl(1+|p|^2+\frac{|b|^2}{\l^2}\Bigr)e^{-|p|^2/2}\,dp \\
&\le C\ve_0\Bigl(1+\frac{|b|^2}{\l^2}\Bigr)e^{|b|^2/(2\l^2)}.
\end{align*}
For $|x|\le R'$ and $|b|\le\ve'|x|$ we have $|b|/\l\le\ve'R'/\l=\L_0$, and therefore
$|v_1(z)|\le C(1+\L_0^2)\ve_0^{1/2}$. Since $(1+\L_0^2)\ve_0^{1/2}\to0$, this is the smallness needed below. For $|x|\ge R'$, \eqref{ZCMM2026d:sector_estimate}, $|v_2(z)|\le C(R')^{-\r}$, $\l^4(R')^{-\r-4}\le C(R')^{-\r}$ and $e^{-c(R')^2/\l^2}=\ve_0^{c/\ve'^2}$ give \eqref{ZCMM2026d:comparison_smallness}.
\end{proof}

\begin{proposition}
Let $v_2$ satisfy the individual hypotheses stated at the beginning of this appendix and put 
$E_0:=1+\sup v_2>1$. Fix a distortion parameter $\th_0\in D_\ve^+$ such that $0<\im\th_0$ and 
$|\th_0|\le\ve'/4$, and let $U'\Subset\C$ be a complex neighbourhood of $E_0$ such that
\[
d(\th_0):=\dist(\overline{U'},\G_{\th_0})>0.
\]
Then, for $R$ sufficiently large, the comparison potential $v_1$ constructed above is real valued on $\R^3$ and entire on $\C^3$, and it has the following properties: 
\begin{itemize} 
\item[(i)] $|v_1(z)|=\Oc(\kum{z}^{-\r})$ on $C_{\ve',0}$, $\|v_1\|_{L^\infty(\R^3)}\le C\ve_0$, and the uniform 
sector smallness satisfies $\sup_{z\in C_{\ve',0}}|v_1(z)|\le\eta_R\to0$ as $R\to\infty$, with $\eta_R$ 
given by \eqref{ZCMM2026d:comparison_smallness}. 

\item[(ii)] With $\d:=\r+4>3$ one has
\[
|v_2(z)-v_1(z)|=\Oc(\kum{z}^{-\d}),\qquad z\in C_{\ve',R'},
\]
and, on the real axis, for every multi-index $\a$,
\[
|\partial^\a(v_2-v_1)(x)|\le C_{\a,R}\kum{x}^{-\d-|\a|},\qquad |x|\ge2R.
\]
For $R$ large enough, $\inf(1+v_1)>0$; hence the pair $(v_1,v_2)$ satisfies Assumption $(\Ab_\d)$, for example on the common cone $C_{\ve',R'}$. 
\item[(iii)] $\sup_{\R^3}|v_1|<\sup v_2$ and, for the relative distribution $\m$ associated with the pair $(v_1,v_2)$,
$E_0=\max\supp(\m)$. 
\item[(iv)] $\Res(\Dl_1)\cap U'=\emptyset$.
\end{itemize}
\label{ZCMM2026d:comparison_potential}
\end{proposition}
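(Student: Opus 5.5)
Parts (i)–(iii) follow from the two preceding lemmas; part (iv) carries the real content.

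\emph{Part (i).} Realness on $\R^3$ and entireness were already noted. For $\|v_1\|_{L^\infty}\le C\ve_0$ and the sector smallness \eqref{ZCMM2026d:comparison_smallness} we quote Lemmas~\ref{ZCMM2026d:comparison_real} and~\ref{ZCMM2026d:comparison_sector}. The decay $|v_1(z)|=\Oc(\kum{z}^{-\r})$ on $C_{\ve',0}$ comes from a split. For $|x|\ge R'$ we write $v_1=v_2+(v_1-v_2)$ and use \eqref{ZCMM2026d:sector_estimate}; the Gaussian term $e^{-c|x|^2/\l^2}$ is $\Oc(\kum{z}^{-N})$ for fixed $R$. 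For $|x|\le R'$ we are on a bounded set, where boundedness suffices.

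\emph{Part (ii).} The sector bound in (ii) is again \eqref{ZCMM2026d:sector_estimate}, absorbing the Gaussian into $\kum{z}^{-\r-4}$. The real-axis derivative bounds are Lemma~\ref{ZCMM2026d:comparison_real}. Since $\|v_1\|_\infty\le C\ve_0\to0$, we get $\inf(1+v_1)>0$ for $R$ large. Together with the hypotheses on $v_2$ (whose cone $C_{\ve,R_2}$ contains $C_{\ve',R'}$), this gives $(\Ab_\d)$ with $\d=\r+4>3$.

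\emph{Part (iii).} For $R$ large, $C\ve_0<\sup v_2$, hence $\sup|v_1|<\sup v_2$. Then $\sup m_1<E_0=\sup m_2$. On $(\sup m_1,\infty)$, \eqref{ZCMM2026d:mu_restricted} gives $\m=\m_{+,2}$, and $\m_{+,2}=-d\n_{+,2}/dE$ with $\n_{+,2}>0$ decreasing on $(1,E_0)$ and vanishing on $[E_0,\infty)$. Hence $\n_{+,2}$ is not constant on any interval $(E_0-\eta,E_0)$, so $E_0\in\supp\m$ while $\m=0$ beyond $E_0$. (If $\n_{+,2}$ were locally constant just left of $E_0$, it would be positive there and jump to zero at $E_0$, which is still a point of support.)

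\emph{Part (iv): no resonances.} Since $v_1$ is entire, take $R_0=0$ and the dilation $\f_\th(x)=(1+\th)x$. Then $\Dl_{1,\th_0}=\Dl_{0,\th_0}+\beb v_1((1+\th_0)\cdot)$. Because $|\th_0|\le\ve'/4$, the set $(1+\th_0)\R^3$ lies in $C_{\ve',0}$ (as $|\im|\le|\th_0||x|\le\ve'|\re|$ roughly), so the perturbation has operator norm $\le\eta_R$. Lemma~\ref{ZCMM2026d:resolvent_lemma} gives
\[
\|(\Dl_{0,\th_0}-z)^{-1}\|\le C_{\th_0,\overline{U'}}/d(\th_0)\qquad\text{for }z\in\overline{U'}.
\]
For $R$ so large that $\eta_R\,C_{\th_0,\overline{U'}}/d(\th_0)<1$, a Neumann series shows that $\Dl_{1,\th_0}-z$ is invertible for every $z\in\overline{U'}$. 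Hence there are no eigenvalues of $\Dl_{1,\th_0}$ there, i.e.\ $\Res(\Dl_1)\cap U'=\emptyset$ by Definition~\ref{ZCMM2026d:definition_resonances} and the distortion-independence in Proposition~\ref{ZCMM2026d:khochmanprop}(iii).

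\emph{Main obstacle.} The delicate points are the uniformity in $\hbar$ and the order of quantifiers. The resolvent bound of Lemma~\ref{ZCMM2026d:resolvent_lemma} is $\hbar$-independent because it is a Fourier-multiplier bound, so $R$ can be chosen independently of $\hbar$. We also need $\th_0$ and $U'$ fixed before $R$: this is legitimate because $C_{\th_0,K}$ and $d(\th_0)$ do not depend on $R$, while $\eta_R\to0$.
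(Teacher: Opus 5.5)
Your proposal is correct and follows the paper's proof essentially step for step: (i)--(ii) come from \eqref{ZCMM2026d:sector_estimate}, the decay of $v_2$ and Lemma~\ref{ZCMM2026d:comparison_real}; (iii) comes from $\sup|v_1|<\sup v_2$ and \eqref{ZCMM2026d:mu_restricted}; and (iv) comes from the pure dilation for the entire $v_1$, plus a Neumann series against the $\hbar$-independent bound of Lemma~\ref{ZCMM2026d:resolvent_lemma} once $\eta_R$ is small. Two cosmetic points: in (iii) say simply that $\n_{+,2}$ is positive on $(1,E_0)$ and zero on $(E_0,\infty)$, hence not constant on any neighbourhood of $E_0$ (your sentence ``not constant on any interval $(E_0-\eta,E_0)$'' fails when $v_2$ has a plateau at its maximum, as your own parenthesis concedes); and, as the paper does, set aside the trivial exterior-vanishing case $v_1\equiv0$, in which $\ve_0=0$ and the mollifier construction is undefined.
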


\begin{proof}
We treat the nontrivial case $\ve_0(R)>0$; the exterior-vanishing case $v_1\equiv0$ was already disposed of above. 
For $|\re z|\ge R'$, \eqref{ZCMM2026d:sector_estimate} and the decay of $v_2$ give
\[
|v_1(z)|\le C\kum{z}^{-\r}+C\l^4\kum{z}^{-\r-4}+Ce^{-c|\re z|^2/\l^2}
=\Oc(\kum{z}^{-\r}).
\]
On the bounded part of $C_{\ve',0}$ the function $v_1$ is entire and therefore bounded. This proves the decay assertion in (i). The real-axis estimate $\|v_1\|_\infty\le C\ve_0$ is Lemma~\ref{ZCMM2026d:comparison_real}, while the uniform complex-sector smallness is exactly \eqref{ZCMM2026d:comparison_smallness}. 

Next put $\d=\r+4$. On $C_{\ve',R'}$, \eqref{ZCMM2026d:sector_estimate} gives
\[
|v_2-v_1|\le C\l^4\kum{z}^{-\d}+Ce^{-c|\re z|^2/\l^2}.
\]
Once $R$ has been fixed, $\l$ is fixed, and the Gaussian is bounded by $C_{N,R}\kum{z}^{-N}$ for every $N$ on this exterior cone; taking $N=\d$ proves the required complex estimate. The real derivative estimate is Lemma~\ref{ZCMM2026d:comparison_real}. Since $\|v_1\|_\infty\to0$ and $\inf(1+v_2)>0$ by hypothesis, for large $R$
\[
\min\{\inf(1+v_1),\inf(1+v_2)\}>0.
\]
Together with the individual decay of $v_1$ and $v_2$, this verifies Assumption $(\Ab_\d)$ for the pair and proves (ii). 

Since $\|v_1\|_\infty\to0$ and $\sup v_2>0$, we may further increase $R$ so that $\sup_{\R^3}|v_1|<\sup v_2$. Hence
$\supp\m_{+,1}\subset[1,1+\sup|v_1|]$, while $E_0=1+\sup v_2$ lies strictly to the right of that interval. Because $v_2(x)\to0$ and $\sup v_2>0$, the maximum of $v_2$ is attained at some $x_0$. By continuity, every interval $(E_0-\varsigma,E_0]$ has positive $\m_{+,2}$-mass. Thus $E_0\in\supp\m_{+,2}$, and \eqref{ZCMM2026d:mu_restricted} shows that $\m=\m_{+,2}$ in a neighbourhood of $E_0$. Therefore $E_0=\max\supp\m$, proving (iii). 

Finally, because $|\th_0|\le\ve'/4$, the dilated ray $(1+\th_0)\R^3$ lies in the cone $|\im z|\le\ve'|\re z|$. Since $v_1$ is entire and has the decay from (i), the distortion for $\Dl_1$ may be taken to be the pure dilation 
$\f_{\th_0}(x)=(1+\th_0)x$. Set
\[
M_{\th_0,U'}:=\sup_{z\in U'}\| (\Dl_{0,\th_0}-z)^{-1}\|.
\]
The separation $d(\th_0)>0$ and Lemma~\ref{ZCMM2026d:resolvent_lemma} imply $M_{\th_0,U'}<\infty$. By \eqref{ZCMM2026d:comparison_smallness},
\[
\sup_{x\in\R^3}|v_1((1+\th_0)x)|\le\eta_R\longrightarrow0.
\]
Choose $R$ so large that $\eta_R M_{\th_0,U'}<1$. Then, for every $z\in U'$,
\[
\Dl_{1,\th_0}-z
=\Bigl(\Idb+\beb\,(v_1\circ\f_{\th_0})(\Dl_{0,\th_0}-z)^{-1}\Bigr)
(\Dl_{0,\th_0}-z),
\]
and the first factor is invertible by a Neumann series. Hence $\Dl_{1,\th_0}-z$ is invertible throughout $U'$, so 
$\Res(\Dl_1)\cap U'=\emptyset$. This proves (iv). 
\end{proof}

\begin{remark}
The clean order of choices is the following. First fix the analyticity cone and hence $\ve'$. Next choose one non-real distortion parameter $\th_0$ with $0<\im\th_0$ and $|\th_0|\le\ve'/4$, and then choose $U'\Subset\C$ around $E_0$ with $\overline{U'}\cap\G_{\th_0}=\emptyset$. Only after $\th_0$ and $U'$ have been fixed do we take $R$ large. The corrected estimate \eqref{ZCMM2026d:comparison_smallness} gives $\eta_R\to0$ with no need to compare the rates of $\ve_0(R)$ and $R^{-\r}$, and this is exactly what is required to make the Neumann-series norm smaller than one. The fixed non-real distortion angle is essential: at $\th_0=0$, the energy $E_0>1$ lies on the free continuous spectrum and the required resolvent bound is unavailable.
\end{remark}

\end{document}